\documentclass[12pt]{article}
\UseRawInputEncoding 
\usepackage{amsfonts, amssymb, amsmath, amsthm, amssymb}
\usepackage{esint}
\usepackage{pifont}
\usepackage{bbding}
\topmargin=0cm \oddsidemargin=0cm \textwidth=16cm \textheight=23cm
\usepackage{latexsym}
\usepackage{mathrsfs}
\usepackage{graphicx}
\usepackage{lineno}
\usepackage[colorlinks, linkcolor=blue, anchorcolor=blue, citecolor=blue]{hyperref}
\usepackage{verbatim}
\allowdisplaybreaks %allow math displays to break into two pages
\usepackage[left=1in, right=1in, bottom=1.4in]{geometry}
%top=1.5in,
%\setlength{\parskip}{.5em}

\newtheorem{theorem}{\indent Theorem}[section]
\newtheorem{coro}{\indent Corollary}[section]

\newtheorem{lemma}{\indent Lemma}[section]

\numberwithin{equation}{section}
\newcommand{\mc}{\mathcal}

\newcommand{\R}{\mathbb{R}}
\newcommand{\wt}{\widetilde}

\newcommand{\ka}{\kappa}
\newcommand{\be}{\beta}

\newcommand{\va}{\varepsilon}
\newcommand{\al}{\alpha}
\newcommand{\pa}{\partial}

\newcommand{\lm}{\lambda}

\newcommand{\de}{\delta}
\newcommand{\De}{\Delta}
\newcommand{\na}{\nabla}

\date{} %\linespread{2.0}
%\linenumbers
\begin{document}

\title{Homogenization of locally periodic parabolic operators with non-self-similar scales}

\author{Jun Geng\thanks{Supported in part by NNSF of China (No. 11971212).} \qquad Weisheng Niu\thanks{Supported by NNSF of China (No. 11971031).}
    }
%}
% \address{ddd}
%\curraddr{}
%\email{}
%\thanks{$^*$  Corresponding author}
%\subjclass[2010]{...}

\maketitle
\pagestyle{plain}
\begin{abstract}
 We investigate quantitative estimates in homogenization of the locally periodic parabolic operator with multiscales
 $$ \pa_t- \text{div} (A(x,t,x/\va,t/\kappa^2) \na ),\qquad \va>0,\, \kappa>0. $$ Under proper assumptions, we establish the full-scale interior and boundary Lipschitz estimates. These results are new even for the case $\kappa=\va$, and for the periodic operators $
 \pa_t-\text{div}(A(x/\va, t/\va^{\ell}) \na  ),$ $0<\va,\ell<\infty, $
of which the large-scale Lipschitz estimate down to $\va+\va^{\ell/2}$ was recently established by the first author and Shen in Arch. Ration. Mech. Anal.  236(1): 145--188 (2020). Due to the non-self-similar structure, the full-scale estimates do not follow directly from the large-scale estimates and the blow-up argument. As a byproduct, we also derive the convergence rates for the corresponding initial-Dirichlet problems, which extend the results in the aforementioned literature to more general settings.
 \end{abstract}

% \footnote[0] {\noindent $^*$ Corresponding author, Tel:+86 05513861306. \\
%\textit{\indent~~~E-mail addresses}:  weisheng.niu@gmail.com.}

%%%%%%%%%%%%%%%%%%%%%%%%%%%%%%%%%%%%%%%%%%%%%%%%%%%%%%%%%%%%%%%%%%%%%%%%%%%%%%%%%%%%%%%%%%%%%%%%%%%

\section{Introduction}
 We investigate quantitative estimates in homogenization of the following second order parabolic operator in $\R^{d+1}$
\begin{align} \label{eq1}
 \pa_t +\mathfrak{L}_\varepsilon=\pa_t -\pa_{x_i}\big(A_{ij}^{\alpha \beta}(x,t,x/\va, t/\ka^2) \pa_{x_j}\big),
\end{align}
where $\va>0$, and $\kappa=\ka(\va)>0$ satisfies the assumption
\begin{align}\label{ratio}
  \kappa\rightarrow 0  \,\text{ as }\, \va\rightarrow 0, \quad\text{ and } \quad\lim_{\va\rightarrow 0} \frac{\kappa}{\va}=\rho.
\end{align}
 The coefficient  matrix $A(x,t,y,s)=(A_{ij}^{\alpha \beta}(x,t,y,s))$, $1\leq i,j\leq d, 1\leq \alpha, \beta\leq n$, is real, bounded measurable with
\begin{align}\label{cod1}
 \mu |\xi|^2\leq   A_{ij}^{\al\be}(x,t,y,s) \xi_i^\al\xi_j^\be , ~~ ~~\|A\|_{L^\infty(\R^{2d+2})}\leq\frac{1}{\mu}
\end{align}
for a.e. $(x,t),(y,s)\in \R^{d+1}$ and any $\xi=(\xi_i^\al)\in \R^{n\times d}$, where $\mu>0$.   We also assume that $A$ is $1$-periodic in $(y,s),$ i.e.,
\begin{align}\label{cod2}
A(x,t,y+z,  s+\tau)=A(x,t,y,s)
\end{align}
 for any $(z,\tau)\in \mathbb{Z}^{d+1}$ and a.e. $(x,t), (y,s)\in \R^{d+1},$
and $A$ satisfies the H\"{o}lder continuity condition in $(x,t)$: there exist $L>0$ and $\theta\in (0,1]$ such that
\begin{align}\label{cod3}
|A(x,t,y,s)-A(x',t',y,s)|\leq L\big\{|x-x'|+ |t-t'|^{1/2} \big\}^\theta,
\end{align}
for any $ x, x', y \in \R^d$ and $t, t', s  \in \R.$ Note that no smoothness assumption is made on $(y,s)$.

Quantitative homogenization of partial differential equations has witnessed a fast growth in the past decades. As the main issues of the quantitative theory, convergence rates and uniform regularity estimates have been widely studied for elliptic equations and systems in various contexts, see \cite{al87,al89,klsamar2012, klsjams2013, gloria2014regularity, armstrongan2016,  gloriajems2017,shenan2017, shenzhu2017} and a great number of references in \cite{shennote2018,armstrongbook2019}.
Quantitative homogenization of parabolic problems has also aroused great interest in recent years. As the prototype,
quantitative estimates in homogenization of the following operator
\begin{align}\label{eqk=2}
\pa_t -\text{div}\big(A(x/\va, t/\va^2) \na\big)
\end{align}
have been largely studied.  The uniform interior and boundary Lipschitz estimates were established in  \cite{geng2015uniform} and \cite{gengjde2020}, while the convergence rates have been studied in different contexts in \cite{gsjfa2017, xup2017,nxjde2019}.
    See also \cite{suslina2016, nxdcds2018,Armstrongapde-2018, gsapde2020} for more related results.

Very recently, quantitative estimates in periodic homogenization of the following parabolic operator  with non-self-similar scales
\begin{align}\label{eqk}
\pa_t -\text{div}\big(A(x/\va, t/\va^{\ell}) \na\big),\quad  0<\ell<\infty,
\end{align}
were studied by the first author and Shen in \cite{gsamar2020}.
Since the scaling of $\va$ in the coefficients $A(x/\va,t/\va^{\ell})$ does not agree with the intrinsic scaling of the second order parabolic operators.  Homogenization of the operator \eqref{eqk} is much more involved than \eqref{eqk=2}. One might get some insights from the qualitative theory in the celebrated monograph \cite{lions1978}, which shows that the temporal and spatial variables do not homogenize  simultaneously for $\ell\neq2$, and particularly the homogenized operator depends on $\ell$ in three cases: $0<\ell<2; \ell=2$; and $2<\ell<\infty$. By introducing a proper family of $\lm$-dependent equations, the authors in \cite{gsamar2020} developed an effective approach and conducted a systematic study on quantitative homogenization of \eqref{eqk}. In particular, they derived the interior and boundary Lipschitz estimates uniform down to the scale $\va+\va^{\ell/2}$. However, the uniform Lipschitz estimates in the small scale remain unknown. Due to the non-self-similar structure, the small-scale estimates do not follow directly from the large-scale estimates and the blow-up argument   as the classical elliptic or parabolic operators considered in \cite{al87,geng2015uniform,shenan2017,shennote2018}, and therefore require more efforts.

The primary aim of this paper is to establish the full-scale (both the large-scale and the small-scale) uniform interior and boundary Lipschitz estimates for the locally periodic parabolic operator  \eqref{eq1}, which, as a special case, provide the small-scale Lipschitz estimates  for the non-self-similar operator \eqref{eqk}.  Qualitative theory in homogenization of locally periodic parabolic operators has been widely studied, see \cite{lions1978,bench2004,Fold2006, wou2010, persson2012} and the large amount of references therein. However, the corresponding quantitative results are very limited. In \cite{past2007,past2010} the optimal convergence rates in homogenization of some locally periodic parabolic operators with time-independent coefficients were considered. In \cite{xaxive2020} the sharp convergence rate was obtained for the locally periodic operator \eqref{eq1} with $\kappa=\va$. To our best knowledge, very few results have been obtained on uniform regularity estimates in homogenization of locally periodic parabolic operators even for the operator \eqref{eq1} with $\kappa=\va$.
%\textcolor[rgb]{1.00,0.00,0.00}{ Note that compared with the periodic operators \eqref{eqk=2} or \eqref{eqk},  the locally periodic one involves the macroscopic variables $x,t$ and the microscopic variables $x/\va,t/\va^2$, which make their quantitative estimates much more cumbersome.}

To present our main results, we use the following notations.
 For $ (x_0,t_0)\in \R^{d+1}$, let $ Q_r(x_0,t_0)=B(x_0,r) \times (t_0-r^2, t_0),$ where $B(x_0,r)=\{x\in \R^d: |x-x_0|<r\}$. Let $\Omega$ be a bounded domain in $\R^d$. For $x_0\in \pa \Omega$ and $t_0\in \R$, define the sets $ D_r(x_0,t_0)$ and $ \De_r(x_0,t_0)$ as follows, $$D_r(x_0,t_0)=(B(x_0,r) \cap \Omega) \times (t_0-r^2, t_0),\quad \De_r(x_0,t_0)=(B(x_0,r) \cap \pa\Omega) \times (t_0-r^2, t_0).$$
For $E\subseteq \R^{d+1}$, we use  $C^{1+\al}(E), 0<\al\leq1$, to denote the parabolic H\"{o}lder space with scale-invariant norm
\begin{align}
\| f\|_{C^{1+\al}(E)}=\|f\|_{L^\infty(E)}+r\|\na  f\|_{L^\infty(E)}+ r^{1+\al}\|\na  f\|_{C^{\al,\al/2}(E)}+r^{1+\al} \|f\|_{C_t^{(1+\al)/2}(E)},
\end{align}
where $\|\cdot\|_{C^{\al,\al/2}(E)}$ and $ \|\cdot\|_{C_t^\al(E)}$ are the parabolic H\"{o}lder seminorms defined as follows
\begin{align}\label{htseminorm}
\begin{split}
&\|u\|_{C^{\al,\al/2}(E)}=\sup_{\substack{(x,t), (y,s)\in E\\ (x,t)\neq (y,s)}} \frac{|u(x,t)-u(y,s)|}{(|x-y|+|t-s|^{1/2})^\al}, \\
&\|u\|_{C_t^\al(E)}=\sup_{\substack{(x,t), (x,s)\in E\\  t \neq  s} } \frac{|u(x,t)-u(x,s)|}{|t-s|^{\al}}.\end{split}
\end{align}

Our first two results provide the uniform interior and boundary Lipschitz estimates for the operator \eqref{eq1}.
 \begin{theorem}\label{thm1}
 Suppose  $A(x,t,y,s)$ satisfies \eqref{cod1}-\eqref{cod3}.  Assume that
$\pa_t u_\va +\mathfrak{L}_\va u_\va =F $ in $Q_1=Q_1(x_0,t_0)$ with $F\in L^p(Q_1)$ for some $p>d+2$.  Then for any  $0<\va+\ka\leq r<1$,
\begin{equation}\label{thm1-re1}
\Big(\fint_{Q_r} |\nabla u_\va|^2\Big)^{1/2}
\le C \Big\{ \Big(\fint_{Q_1} |\nabla u_\va|^2 \Big)^{1/2}
+ \Big(\fint_{Q_1} |F|^p \Big)^{1/p} \Big\},
\end{equation}
where $C$ depends only on $d$, $n$, $p$, $\mu$, and $(\theta,L)$ in \eqref{cod3}.

If, in addition,  \eqref{ratio} holds and  there exist $M>0$ and $\vartheta\in (0,1]$ such that
\begin{align}\label{cod33}
|A(x,t,y,s)-A(x',t',y',s')|\leq M \big\{|x-x'|+|y-y'|+ |t-t'|^{1/2}+|s-s'|^{1/2} \big\}^\vartheta
\end{align}
for any $ x,x',y,y' \in \R^d$ and $t,t', s,s' \in \R.$ Then for any $\va,\kappa>0$ and $0<r\leq 1$, we have
\begin{align}\label{thm1-re2}
 |\na u_\va(x_0,t_0)|\leq C \Big\{\Big(\fint_{Q_r}|\na u_\va|^2\Big)^{1/2} +r\Big(\fint_{Q_r}|F|^p\Big)^{1/p} \Big\},
\end{align}
where $C$ depends only on $d$, $n$, $\mu$, $p$, and $(\vartheta,M)$ in \eqref{cod33}, and $\rho$ as well as the convergence rate of $ \kappa/\va$ to $\rho$ in \eqref{ratio} .
\end{theorem}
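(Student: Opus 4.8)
The plan is to prove the two estimates separately and then combine them: \eqref{thm1-re1} is a large-scale statement valid for $r\ge\va+\ka$, and \eqref{thm1-re2} follows by gluing \eqref{thm1-re1} to a small-scale ($r\le\va+\ka$) regularity bound.

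For \eqref{thm1-re1} I would run the standard one-step-improvement-plus-iteration scheme for large-scale Lipschitz regularity (of Avellaneda--Lin type), adapted to the non-self-similar parabolic framework following \cite{gsamar2020}. The core is an \emph{approximation lemma}: for $\va+\ka\le r\le1$, the solution $u_\va$ on $Q_r$ is $L^2$-close on $Q_{r/2}$, after subtracting suitable first-order correctors, to a solution $w$ of an effective system $\pa_t w-\mathrm{div}(\widehat A_r\na w)=F$, with
\[
\Big(\fint_{Q_{r/2}}|u_\va-w|^2\Big)^{1/2}\le C\,\eta(\tfrac\va r,\tfrac\ka r)\Big\{\Big(\fint_{Q_r}|\na u_\va|^2\Big)^{1/2}+r\Big(\fint_{Q_r}|F|^p\Big)^{1/p}\Big\},
\]
where $\eta(s_1,s_2)\to0$ as $s_1,s_2\to0$. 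Proving this requires freezing the slow variables $(x,t)$ (as in the locally periodic theory) and using the $\lambda$-dependent family of effective operators and correctors of \cite{gsamar2020} with $\lambda$ tuned to the scale $r$, so that the ratio $\ka/r$ is absorbed correctly; the effective coefficients $\widehat A_r$ inherit the ellipticity constant $\mu$ of \eqref{cod1} and the H\"older bound $(\theta,L)$ of \eqref{cod3}, hence satisfy interior Schauder estimates with constants independent of $r,\va,\ka$. Combining the approximation lemma with the Campanato (excess) decay for $w$ and Caccioppoli's inequality shows that $\Phi(r):=\tfrac1r\inf_{q\in\R^n,\,M\in\R^{n\times d}}\big(\fint_{Q_r}|u_\va-q-Mx|^2\big)^{1/2}$ satisfies $\Phi(\theta r)\le\tfrac12\Phi(r)+C\,\mathcal R(r)$ for a fixed $\theta\in(0,1)$ and all $r$ with $\va+\ka\le\theta r$, where $\mathcal R(r)$ denotes the bracketed right-hand side above; iterating down to the scale $\va+\ka$ then gives \eqref{thm1-re1}.

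For the small-scale bound, fix $0<r\le\va+\ka$ and rescale: $v(x,t):=u_\va(x_0+rx,t_0+r^2t)$ solves $\pa_t v-\mathrm{div}(\widetilde A\na v)=r^2\widetilde F$ on $Q_1$, with $\widetilde A(x,t)=A(x_0+rx,t_0+r^2t,(x_0+rx)/\va,(t_0+r^2t)/\ka^2)$ and $\widetilde F(x,t)=F(x_0+rx,t_0+r^2t)$. By \eqref{cod33},
\[
|\widetilde A(x,t)-\widetilde A(x',t')|\le M\Big\{\big(r+\tfrac r\va\big)|x-x'|+\big(r+\tfrac r\ka\big)|t-t'|^{1/2}\Big\}^\vartheta,
\]
and \eqref{ratio} together with $r\le\va+\ka$ keeps $r/\va$ and $r/\ka$ bounded by a constant depending only on $\rho$ and the rate of convergence in \eqref{ratio}; thus $\widetilde A$ has parabolic $C^{\vartheta,\vartheta/2}$ seminorm bounded uniformly in $r,\va,\ka$. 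Classical interior Schauder ($C^{1+\vartheta}$) estimates for parabolic systems then yield $|\na v(0,0)|\le C\big\{(\fint_{Q_1}|\na v|^2)^{1/2}+(\fint_{Q_1}|r^2\widetilde F|^p)^{1/p}\big\}$, which after undoing the scaling is precisely \eqref{thm1-re2} for $r\le\va+\ka$. For $\va+\ka\le r\le1$ one applies this last inequality on $Q_{\va+\ka}(x_0,t_0)$ and then Caccioppoli together with \eqref{thm1-re1} (applied with outer radius $r$ and inner radius $\va+\ka$) to bound $(\fint_{Q_{\va+\ka}}|\na u_\va|^2)^{1/2}$ by the right-hand side of \eqref{thm1-re2}, completing the proof for all $0<r\le1$.

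The main new difficulty, flagged in the introduction, is that the small-scale estimate cannot be extracted from the large-scale one by a blow-up argument: the non-self-similar scaling makes the blow-up limit a $\lambda$-indexed family rather than a single constant-coefficient operator. The real obstacle is therefore to arrange the rescaling in the small-scale step so that it lands in a class with uniform regularity --- which is exactly why the full H\"older hypothesis \eqref{cod33} (continuity in the fast variables as well) is imposed, and why, when $\rho=0$ so that $\ka\ll\va$, one must re-apply the large-scale estimate to the rescaled, essentially purely temporally oscillating, operator in the window $\ka\lesssim r\lesssim\va$ before invoking Schauder; this is also the source of the dependence of the constant in \eqref{thm1-re2} on $\rho$ and on the rate in \eqref{ratio}. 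The other technically demanding ingredient is the approximation lemma above, where the $\lambda$-dependent correctors of \cite{gsamar2020} must be combined with the freezing of the slow variables, and the Campanato iteration must correctly track the scale-dependent parameter $\lambda$.
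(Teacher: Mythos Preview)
Your large-scale plan for \eqref{thm1-re1} matches the paper's. One small difference: the paper does not tune $\lambda$ to the running scale $r$ but fixes $\lambda=\ka^2/\va^2$ once, so that $\pa_t+\mathcal L_\va^\lambda\equiv\pa_t+\mathfrak L_\va$ and $(1+\sqrt\lambda)\va=\va+\ka$, and then proves the large-scale Lipschitz estimate for $\pa_t+\mathcal L_\va^\lambda$ with constants independent of $\lambda$ (Theorem~\ref{thmllip}); \eqref{thm1-re1} follows by specialising.

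The small-scale step, however, has a genuine gap. Your assertion that $r\le\va+\ka$ together with \eqref{ratio} keeps both $r/\va$ and $r/\ka$ bounded is false unless $0<\rho<\infty$: if $\rho=0$ then $(\va+\ka)/\ka=1+\va/\ka\to\infty$, and if $\rho=\infty$ then $(\va+\ka)/\va\to\infty$. In those cases your rescaled coefficient $\widetilde A$ is \emph{not} uniformly H\"older, and the direct Schauder argument collapses. Your closing paragraph shows you sense this for $\rho=0$, but the symmetric issue for $\rho=\infty$ is missing and the remedy is left vague. The paper does not rescale by $r$. It rescales by $\va$ when $\rho<\infty$ and by $\ka$ when $\rho=\infty$. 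For $0<\rho<\infty$ the blown-up coefficient $A(\va x,\va^2 t,x,t\va^2/\ka^2)$ is genuinely uniformly H\"older and classical Schauder finishes. For $\rho=0$ the blow-up by $\va$ produces the operator $\pa_t-\text{div}(A(\va x,\va^2 t,x,t/(\va')^2)\na\,\cdot\,)$ with $\va'=\ka/\va$, so only a \emph{temporal} fast scale remains; for $\rho=\infty$ the blow-up by $\ka$ leaves only a \emph{spatial} fast scale. One then needs a \emph{full-scale} uniform pointwise Lipschitz estimate for these single-oscillation classes (Corollaries~\ref{coro1} and~\ref{coro2}). Those are themselves nontrivial: they are deduced from the self-similar case $\pa_t-\text{div}(A(x,t,x/\va,t/\va^2)\na)$ (Theorem~\ref{pointlip}), where large-scale plus a single blow-up by $\va$ \emph{does} succeed because the two fast scales coincide. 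This intermediate layer---uniform Lipschitz for operators with only one oscillation---is the piece your outline is missing.
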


%\begin{align}\| f\|_{C^{1+\al}(\De_{r})}=\|f\|_{L^\infty(\De_r)}+r\|\na_{tan} f\|_{L^\infty(\De_r)}+ r^{1+\al}\|\na_{tan} f\|_{C^{\al,\frac{\al}{2}}(\De_r)}+r^{1+\al} \|f\|_{C_t^{\frac{1+\al}{2}}(\De_r)}\end{align}}

\begin{theorem}\label{b-thm2}
Let $\Omega$ be a bounded $C^{1,\al} (0\!<\!\al\!<\!1)$ domain in $\R^d$.
Suppose  $A(x,t,y,s)$ satisfies \eqref{cod1}-\eqref{cod3}.  Assume that
 $\pa_t u_\va +\mathfrak{L}_\va u_\va =F$ in $D_1=D_1(x_0,t_0)$ with $F\in L^p(D_1)$ for some $p>d+2$, and $u_\va=g$  on $\De_1=\De_1(x_0,t_0)$ with $g\in C^{1+\al}(\De_1)$. Then for any $0<\va+\kappa\leq r<1$,
 \begin{equation}\label{b-thm2-re1}
\Big(\fint_{D_r } |\nabla u_\va|^2\Big )^{1/2}
\le C \Big\{ \Big(\fint_{D_1 } |\nabla u_\va|^2 \Big)^{1/2}
+  \| g\|_{C^{1+\al}(\Delta_1)}
+  \Big(\fint_{D_1 } |F|^p \Big)^{1/p} \Big\},
\end{equation}
where $C$ depends only on $d$, $n$, $p$, $\mu$, $\al$, $\Omega$,  and $(\theta,L)$ in \eqref{cod3}.

If in addition \eqref{ratio} holds and $A$ satisfies \eqref{cod33}. Then for any $\va,\kappa>0$ and $0<r\leq1$,
 \begin{align}\label{b-thm2-re2}
   |\na u_\va(x_0,t_0) |  \leq C \Big\{\Big(\fint_{D_r }\!|\na u_\va|^2\Big)^{1/2} \!+\! r\Big(\fint_{D_r }|F|^p\Big)^{1/p}\!+\!r^{-1}\|g\|_{C^{1+\al}(\De_r )}\Big\},
\end{align}
where $C$ depends only on $d$, $n$, $\mu$, $p$, $\al$, $\Omega$, and $(\vartheta,M)$ in \eqref{cod33}, and $\rho$ as well as the convergence rate of $\kappa/\va$ to $\rho$ in \eqref{ratio}.
\end{theorem}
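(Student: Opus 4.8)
The plan is to treat the two assertions separately, following the two-scale philosophy already used for the interior estimates. For the large-scale bound \eqref{b-thm2-re1}, valid down to $r\sim\va+\ka$, I would argue by a compactness/iteration scheme adapted to the boundary. First I would flatten the boundary locally using the $C^{1,\al}$ regularity of $\Om$, which reduces matters to a half-space-type domain with the Dirichlet condition on a flat piece; the Hölder continuity in $(x,t)$ from \eqref{cod3} is preserved under this change of variables. Then the core is a one-step improvement lemma: there exist $\eta\in(0,1/2)$ and $\va_0>0$ so that if $\va+\ka\le\eta$ and $u_\va$ solves the equation in $D_1$ with the given data, then
\begin{equation*}
\Big(\fint_{D_\eta}|\na u_\va|^2\Big)^{1/2}\le \frac12\Big(\fint_{D_1}|\na u_\va|^2\Big)^{1/2}+C\|g\|_{C^{1+\al}(\De_1)}+C\Big(\fint_{D_1}|F|^p\Big)^{1/p}.
\end{equation*}
This is proved by contradiction: a sequence violating it, after subtracting the affine boundary data and rescaling, converges (using the Caccioppoli inequality, the interior/boundary $C^{\ga}$ estimates for the $\va$-problem, and the qualitative homogenization theorem together with the hypothesis $\ka/\va\to\rho$) to a solution of the homogenized boundary-value problem, whose $C^{1,\ga}$ boundary regularity contradicts the strict inequality for large indices. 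Iterating the improvement lemma on the shrinking cylinders $D_{\eta^k}$, summing the geometric series, and then rescaling back through the flattening map yields \eqref{b-thm2-re1} for all $\va+\ka\le r<1$; the range $r<\eta$ down to $\va+\ka$ is handled by the same dyadic summation stopped at the first scale comparable to $\va+\ka$.

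For the full-scale pointwise bound \eqref{b-thm2-re2}, which must hold for \emph{all} $0<r\le1$ with no restriction $r\gtrsim\va+\ka$, the large-scale estimate is combined with a small-scale argument. Above the scale $\va+\ka$ the bound \eqref{b-thm2-re2} follows from \eqref{b-thm2-re1} together with the boundary $C^{1+\ga}$ estimate for the homogenized (or frozen-coefficient) operator, exactly as in the classical passage from large-scale to pointwise Lipschitz estimates; the extra term $r^{-1}\|g\|_{C^{1+\al}(\De_r)}$ accounts for the inhomogeneous Dirichlet data. The genuinely new ingredient is the regime $r<\va+\ka$, where one must exploit the finer structure \eqref{cod33} — full joint Hölder continuity of $A$ in all four variables. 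Here I would freeze all four slow/fast arguments at $(x_0,t_0,x_0/\va,t_0/\ka^2)$: the frozen operator $\pa_t-\mathrm{div}(A^0\na)$ has constant coefficients, so it enjoys the classical boundary $C^{1+\ga}$ Schauder estimate on the $C^{1,\al}$ domain. The perturbation $A(x,t,x/\va,t/\ka^2)-A^0$ is controlled on $Q_r$ by $CM\big(r+r/\va+r^{1/2}+r/\ka\big)^\var\le C(r/(\va+\ka))^{\var'}$ for $r\le\va+\ka$ (using \eqref{ratio} to compare $\va$ and $\ka$), which is small; a standard perturbation/freezing iteration (Campanato-type, or Schauder with small coefficient oscillation) then transfers the Schauder estimate to $u_\va$ on all scales below $\va+\ka$. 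Matching the two regimes at $r\sim\va+\ka$ gives \eqref{b-thm2-re2} for every $r\le1$.

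I expect the main obstacle to be the boundary compactness step in the large-scale lemma: one needs uniform (in $\va,\ka$) boundary $C^\ga$ Hölder estimates for $u_\va$ near the flattened boundary to extract a convergent subsequence, and one needs the qualitative homogenization to hold \emph{up to the boundary} with the correct effective operator, which, because of \eqref{ratio}, depends on the limiting ratio $\rho$ (and, in the borderline cases analogous to $\ell=2$ in \cite{gsamar2020}, on finer information — hence the appearance of the convergence rate of $\ka/\va$ to $\rho$ in the constant). A secondary technical point is ensuring that the flattening diffeomorphism does not destroy the periodicity structure in $(y,s)$; this is handled by noting the periodic variables are untouched by a change of the slow spatial variable only, so the transformed coefficients are still of locally periodic type with the same period, at the cost of an extra $C^{0,\al}$-in-$(x,t)$ modulus which is absorbed into $L$. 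Once these are in place, the iteration and the small-scale perturbation argument are routine.
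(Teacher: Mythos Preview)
Your proposal has genuine gaps in both parts.

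\textbf{Large-scale estimate \eqref{b-thm2-re1}.} You invoke the hypothesis $\kappa/\va\to\rho$ from \eqref{ratio} in the compactness step, but \eqref{ratio} is \emph{not} assumed for \eqref{b-thm2-re1}: the statement requires the constant $C$ to depend only on $d,n,p,\mu,\al,\Om,(\theta,L)$, uniformly in the ratio $\kappa/\va$. A compactness argument along a bad sequence $(\va_n,\kappa_n)$ gives no control on $\kappa_n/\va_n$, and different subsequential limits of this ratio produce different homogenized operators $\widehat{A^\rho}$; you would need uniform-in-$\rho$ Schauder bounds for the family $\{\widehat{A^\rho}\}_{\rho\in[0,\infty]}$ and a careful subsequence argument, none of which you address. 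The paper sidesteps this entirely: it introduces the $\lambda$-parametrized self-similar family $\pa_t+\mathcal L_\va^\lm$ with $A^\lm(x,t,y,s)=A(x,t,y,s/\lm)$, proves a \emph{quantitative} boundary approximation theorem (Theorem~\ref{b-thmap}) with constants independent of $\lambda$, feeds this into the excess-decay iteration scheme of Armstrong--Smart/Shen (Lemmas~\ref{b-liple-1}--\ref{b-liple-2} and Theorem~\ref{b-thmllip}), and then simply sets $\lambda=\kappa^2/\va^2$. This gives \eqref{b-thm2-re1} with no reference to \eqref{ratio} at all.

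\textbf{Small-scale estimate \eqref{b-thm2-re2}.} Your freezing argument is wrong when $\rho\in\{0,\infty\}$. The oscillation of $A(x,t,x/\va,t/\kappa^2)$ on $D_r$ is of order $(r+r/\va+r/\kappa)^\vartheta$; for $r\le\va+\kappa$ this is small only if $\va$ and $\kappa$ are comparable. If, say, $\rho=0$ (so $\kappa\ll\va$), then at $r=\va+\kappa\sim\va$ one has $r/\kappa\sim\va/\kappa\to\infty$, and the perturbation is \emph{not} small. Thus between the scales $\kappa$ and $\va$ your argument gives nothing. The paper handles this by a blow-up that does \emph{not} kill all oscillation: when $\rho=0$ one rescales by $\va$ to obtain an operator with coefficient $A(\va x,\va^2t,x,t/(\kappa/\va)^2)$, which still oscillates in $t$ at scale $(\kappa/\va)^2$; one then applies Corollary~\ref{b-coro1}, the full-scale boundary Lipschitz estimate for operators with \emph{only temporal} oscillation (itself a special case of Theorem~\ref{b-lip}, the self-similar $\kappa=\va$ case). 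The case $\rho=\infty$ is symmetric, rescaling by $\kappa$ and invoking Corollary~\ref{b-coro2}. Only the case $0<\rho<\infty$ reduces to a genuinely H\"older-coefficient problem after blow-up, which is the scenario your argument actually covers.
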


The estimates in \eqref{thm1-re1} and \eqref{b-thm2-re1} provide the large-scale uniform interior and boundary Lipschitz estimates for the operator $ \pa_t +\mathfrak{L}_\varepsilon$, which are totally induced by the homogenization process. The scale $\va+\kappa$, which is composed of the scales of temporal and spatial oscillations, is more or less optimal. On the other hand, the estimates \eqref{thm1-re2} and \eqref{b-thm2-re2} give the uniform interior and boundary Lipschitz estimates in the small scale, which is attributed to the smoothness of the coefficients. These estimates, on the one hand, provide the full-scale Lipschitz estimates for the locally periodic operator $\pa_t-\text{div}(A(x,t,x/\va,t/\va^2)\na )$. On the other hand, they imply the uniform interior and boundary Lipschitz estimates in the small scale for the non-self-similar operator \eqref{eqk}, which were left out in \cite{gsamar2020}.

The proof of \eqref{thm1-re1} and \eqref{b-thm2-re1} is motivated by \cite{gsamar2020} and \cite{Niu-Xu-2020}. First, we follow the idea of \cite{gsamar2020} to introduce a family of $\lm$-dependent operators
\begin{align*}
\pa_t +\mathcal{L}^\lm_\va =\pa_t-\text{div} (A^\lm(x,t,x/\va,t/\va^2) \na )
\end{align*} with
$A^\lm(x,t,y,s)=A(x,t,y,s/\lm)$. This in some sense is a kind of scale-reduction argument, as formally the resulting operator has only scale $\va$, and the scale $\kappa$ has gone. In particular, for fixed $0<\lm<\infty$ the resulting operator is self-similar as the one in \eqref{eqk=2}.
Then by adapting the approach developed for studying the large-scale regularity in homogenization of elliptic problems in \cite{armstrongan2016, shenan2017}, we establish the large-scale interior and boundary Lipschitz estimates for $\pa_t+\mathcal{L}_\va^\lm$ with bounding constant independent of $\lm$. As a result, the large-scale estimates for $\pa_t+\mathfrak{L}_\va$ follows by setting $\lm=\kappa^2/\va^2$.   To carry out the plan above, we use some ideas in \cite{Niu-Xu-2020} to deal with the locally periodic operators. In particular, we derive some estimates with sharp bounding constants depending explicitly on $\|\na_x A\|_\infty$ and $\|\pa_t A\|_\infty$, which allow us to quantify the smooth approximation of the coefficients, and  are essential in the derivation of the large-scale estimates of $\pa_t+\mathcal{L}_\va^\lm$. See Sections 2, 3 and 4 for the details.

Armed with \eqref{thm1-re1} and \eqref{b-thm2-re1}, we then perform the blow-up analysis to establish the full-scale interior and boundary Lipschitz estimates for the operators with only spatial or temporal oscillations, i.e.,
\begin{align}\label{ilva}
 \pa_t-\text{div}(A(x,t,x/\va)\na ),  \quad\text{and}\quad
 \pa_t-\text{div}(A(x,t,t/\va^2)\na ).
\end{align}
Finally, by performing proper rescaling argument according to the value of $\rho$, we prove the desired estimates \eqref{thm1-re2} and \eqref{b-thm2-re2}.

As a byproduct of the above process, we also derive the convergence rates for the initial-Dirichlet problem
\begin{align}\label{eq2}
\pa_tu_\va +\mathfrak{L}_\varepsilon u_\va=F   \,\, \,\,\text{in } \Omega_T, \quad\quad
u_\va   =g   \,\, \,\,\text{on } \partial_p \Omega_T,
\end{align}
where $\Omega$ is a bounded domain in $\R^d$, $\Omega_T=\Omega\times (0,T),$ and  $\pa_p \Omega_T$ is the parabolic boundary of $\Omega_T$.

\begin{theorem}\label{cothm}
Let $\Omega$ be a bounded $C^{1, 1}$ domain in $\mathbb{R}^d$. Suppose \eqref{ratio} holds, and $A$ satisfies \eqref{cod1}, \eqref{cod2} and \eqref{cod3} with $\theta=1$.
Moreover,  assume that $\|\na^2_y A\|_{L^\infty(\R^{2d+2})}<\infty$ if $\rho=0$, and $\|\pa_s A\|_{L^\infty(\R^{2d+2})}<\infty$ if $\rho=\infty$.
Let $u_\va$ be a weak solution to \eqref{eq2} and $u_0$ the solution to the
homogenized problem.
Then
\begin{align}
\begin{split}\label{cothmre}
\| u_\va  -u_0\|_{L^2(\Omega_T)}
&\leq
 \big\{ \| u_0\|_{L^2(0, T; H^2(\Omega))}
+\|\partial_t u_0\|_{L^2(\Omega_T)} \big\}\\
&\quad \times \left\{
 \aligned
& C_1  \left\{ \kappa + \va + (\va \kappa^{-1})^2\right\}      \quad\text{ if }  \rho =\infty,\\
 &C_2 \left\{ \kappa +\va + \rho^{-2}  \big|   ( \kappa \va^{-1} )^2  -\rho^2   \big| \right\}  \quad \text{ if }  0< \rho < \infty,\\
 &C_3 \left\{ \kappa +\va + (\kappa \va^{-1})^2 \right\}     \quad \text{ if }  \rho=0,
 \endaligned
 \right.\end{split}
\end{align}
where $C_1$ depends only on $d$, $n$, $\mu$, $L$,  $T$, $\Omega,$ and  $\|\pa_s A\|_{L^\infty(\R^{2d+2})}$, $C_2$ depends only on $d$, $n$, $\mu$, $L$, $T,$ and  $\Omega$,
and  $C_3$ depends only on  $d$, $n$, $\mu$, $L$,  $T$, $\Omega,$ and  $ \|\na^2_y A\|_{L^\infty(\R^{2d+2})}$.
\end{theorem}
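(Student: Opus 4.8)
The strategy is to reduce \eqref{eq2} to the self-similar locally periodic problem for $\pa_t+\mc L_\va^\lm$ considered in the previous sections, run the two-scale expansion there with constants that do not depend on $\lm$, and then quantify separately how the effective matrix $\wh A^\lm$ varies with the scale ratio $\lm$. Put $\lm=\ka^2/\va^2$ and $A^\lm(x,t,y,s)=A(x,t,y,s/\lm)$, so that $A(x,t,x/\va,t/\ka^2)=A^\lm(x,t,x/\va,t/\va^2)$ and $u_\va$ is a weak solution of $\pa_t u_\va-\text{div}(A^\lm(x,t,x/\va,t/\va^2)\na u_\va)=F$ in $\Om_T$ with $u_\va=g$ on $\pa_p\Om_T$. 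Let $\chi^\lm_j$ denote the associated correctors (for each parameter $(x,t)$, $\chi^\lm_j$ is the $1$-periodic in $(y,s)$ solution of $\lm^{-1}\pa_s\chi^\lm_j-\text{div}_y\big(A(x,t,y,s)(\na_y\chi^\lm_j+e_j)\big)=0$), let $\wh A^\lm(x,t)$ be the effective matrix, and let $v_0^\lm$ solve the homogenized problem with coefficients $\wh A^\lm$ and the same data $(F,g)$. The effective matrix of the original operator is $\wh A^\ast:=\wh A^{\rho^2}$, with $\wh A^0:=\lim_{\lm\to0}\wh A^\lm$ and $\wh A^\infty:=\lim_{\lm\to\infty}\wh A^\lm$ when $\rho=0$ or $\infty$, so that $u_0$ solves the homogenized problem with coefficients $\wh A^\ast$. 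Then $\|u_\va-u_0\|_{L^2(\Om_T)}\le\|u_\va-v_0^\lm\|_{L^2(\Om_T)}+\|v_0^\lm-u_0\|_{L^2(\Om_T)}$ and I would bound the two pieces separately.

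For the first piece I would carry out the parabolic two-scale expansion with a Steklov-type smoothing $S_\va$ adapted to $\pa_p\Om_T$: set $w_\va=u_\va-v_0^\lm-\va\,\chi^\lm_j(x,t,x/\va,t/\ka^2)\,S_\va(\pa_j v_0^\lm)$, compute $(\pa_t+\mc L_\va^\lm)w_\va$, absorb the $O(\va^{-1})$ terms by the cell equation and rewrite the rest in divergence form using the (parabolic) flux correctors associated with $\chi^\lm$, and then apply the energy inequality. This yields $\|u_\va-v_0^\lm\|_{L^2(\Om_T)}\le C(\va+\ka)\big\{\|v_0^\lm\|_{L^2(0,T;H^2(\Om))}+\|\pa_t v_0^\lm\|_{L^2(\Om_T)}\big\}$ with $C$ \emph{independent of} $\lm$, the $\lm$-uniformity being exactly what the sharp, $\lm$-independent corrector and flux-corrector bounds developed earlier supply. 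Since $\wh A^\lm$ is uniformly elliptic and, by \eqref{cod3} with $\theta=1$, uniformly (in $\lm$) Lipschitz in $(x,t)$, and $\Om$ is $C^{1,1}$, parabolic regularity bounds the bracket by $C\{\|u_0\|_{L^2(0,T;H^2(\Om))}+\|\pa_t u_0\|_{L^2(\Om_T)}\}$, again uniformly in $\lm$.

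Next, $\zeta:=v_0^\lm-u_0$ solves $\pa_t\zeta-\text{div}(\wh A^\lm\na\zeta)=\text{div}\big((\wh A^\lm-\wh A^\ast)\na u_0\big)$ in $\Om_T$ with vanishing initial and lateral data, so the basic energy estimate gives $\|\zeta\|_{L^2(\Om_T)}\le C\,\|\wh A^\lm-\wh A^\ast\|_{L^\infty(\Om_T)}\,\|u_0\|_{L^2(0,T;H^2(\Om))}$. It then remains to prove the quantitative continuity of $\lm\mapsto\wh A^\lm$: (i) $\|\wh A^\lm-\wh A^\infty\|_\infty\le C\lm^{-1}$ when $\|\pa_s A\|_\infty<\infty$; (ii) $\|\wh A^\lm-\wh A^{\rho^2}\|_\infty\le C\rho^{-2}|\lm-\rho^2|$ for $\lm$ in a fixed neighbourhood of $\rho^2$ (which holds for all small $\va$ by \eqref{ratio}), with $C$ independent of $\rho$; and (iii) $\|\wh A^\lm-\wh A^0\|_\infty\le C\lm$ when $\|\na_y^2 A\|_\infty<\infty$. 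These follow from a perturbation analysis of the cell problems — differentiate the cell equation in $\lm$, use $\|\pa_s\chi^\lm\|_{L^2_s H^{-1}_y}\le C\lm$ and the stated extra smoothness to recover the power of $\lm$ missing near $0$ and near $\infty$. Substituting $\lm=\ka^2/\va^2$ converts the three bounds into $(\va\ka^{-1})^2$, $\rho^{-2}\big|(\ka\va^{-1})^2-\rho^2\big|$ and $(\ka\va^{-1})^2$; combining with the first piece and tracking which hypotheses each constant uses produces \eqref{cothmre}.

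I expect the main obstacle to lie in two places. First, the $\lm$-uniform rate $O(\va+\ka)$ for $\pa_t+\mc L_\va^\lm$: because the spatial and temporal scales do not match, the correctors and flux correctors are genuinely $\lm$-dependent, the $\pa_s$-derivative of the temporal flux corrector enters multiplied by $\ka^{-2}$ and is controlled only through the identity $\lm^{-1}\ka^2=\va^2$, and one must check that all corrector bounds survive the degenerate limits $\lm\to0$ and $\lm\to\infty$ — this is where the ``$\ka$'' in \eqref{cothmre} originates. Second, the sharp stability of $\wh A^\lm$ near $\lm=0,\infty$: the naive estimate $|\tfrac{d}{d\lm}\wh A^\lm|\lesssim\lm^{-1}$ is not integrable at the origin, so obtaining the rates $O(\lm)$ and $O(\lm^{-1})$ requires exploiting $\|\na_y^2 A\|_\infty$ (respectively $\|\pa_s A\|_\infty$) — precisely the extra hypotheses that make $C_3$ (respectively $C_1$) depend on them while $C_2$ does not.
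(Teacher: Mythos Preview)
Your overall strategy is exactly the paper's: set $\lm=\ka^2/\va^2$ so that $\pa_t+\mathfrak{L}_\va=\pa_t+\mc L_\va^\lm$, split $u_\va-u_0=(u_\va-u_0^\lm)+(u_0^\lm-u_0)$, control the first piece by the $\lm$-uniform two-scale expansion and the second by the quantitative stability of $\wh{A^\lm}$ in $\lm$. Your three stability bounds (i)--(iii) are precisely those of Lemma~\ref{lemconA}, and you correctly identify which extra smoothness hypothesis drives each case and why $C_2$ is free of them.

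There is, however, one genuine gap in the first piece. You say that after computing $(\pa_t+\mc L_\va^\lm)w_\va$ and rewriting via flux correctors you ``apply the energy inequality'' and obtain $\|u_\va-v_0^\lm\|_{L^2(\Om_T)}\le C(\va+\ka)\{\cdots\}$. The direct energy estimate does \emph{not} give this: because of the boundary layer $\Om_{T,\de}$ (the cutoff $\eta_\de$ has $|\na\eta_\de|\sim\de^{-1}$ on a set of measure $\sim\de$) the energy method only yields $\|\na \wt w_\va^\lm\|_{L^2(\Om_T)}\le C\de^{1/2}\{\cdots\}$ with $\de=\va+\ka$ --- this is Theorem~\ref{cothm71} --- hence only an $O((\va+\ka)^{1/2})$ rate in $L^2$. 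To reach the sharp $O(\va+\ka)$ rate stated in \eqref{cothmre} the paper runs a duality argument (Theorem~\ref{cothm72}): solve the adjoint problem $-\pa_t v_\va^\lm+\mc L_\va^{\lm *}v_\va^\lm=G$, build the analogous corrector expansion $z_\va^\lm$ for $v_\va^\lm$, and pair $(\pa_t+\mc L_\va^\lm)\wt w_\va^\lm$ against $v_\va^\lm$ decomposed as $z_\va^\lm(T-t)+v_0^\lm+(v_\va^\lm-v_0^\lm-z_\va^\lm(T-t))$; the estimate \eqref{cothm71re1} applied on both sides then produces two factors of $\de^{1/2}$, giving the full power of $\de$. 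This Suslina-type duality step is essential and is missing from your plan. A secondary point: the smoothing scale must be $\de=(1+\sqrt{\lm})\va=\va+\ka$, not $\va$, for the flux-corrector bounds \eqref{huab3} (which carry a factor $1+\lm$) to be absorbed uniformly in $\lm$.
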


There are three terms in the convergence rate \eqref{cothmre}. The first two terms $\kappa,\va$ represent respectively the scales of the space and time variables. The last term might be understood as the product of the competition in homogenization between the temporal and spatial variables. Note that these two variables do not homogenize simultaneously if $\kappa\neq c \va$. Finally, we remark that
for the case $\kappa=\va$ the estimate \eqref{cothmre} gives the sharp $O(\va)$-order convergence rate for the locally periodic parabolic operator $\pa_t-\text{div} (A(x,t,x/\va,t/\va^2) \na)$, which has recently been studied in \cite{xaxive2020}. And for the case $\kappa=\va^{\ell/2}$, \eqref{cothmre} reduces to
\begin{align*}
\| u_\va  -u_0\|_{L^2(\Omega_T)}
&\leq
 C \big\{ \| u_0\|_{L^2(0, T; H^2(\Omega))}
+\|\partial_t u_0\|_{L^2(\Omega_T)} \big\} \cdot\left\{
 \aligned
&  \va^{\ell/2} + \va^{2-\ell}
    \text{ if } 0<\ell< 2,\\
   &  \va
 \quad \quad \quad\quad   \,\, \text{ if } \ell=2 ,\\
& \va+\va^{\ell-2}\quad  \,\,  \text{ if } 2<\ell<\infty,
 \endaligned
 \right.
\end{align*}
 which extends Theorem 1.3 in \cite{gsamar2020} to the locally periodic case.

The remaining parts of the paper is arranged as follows. In Section 2, we provide the qualitative homogenization of the locally periodic operator $\pa_t +\mathcal{L}^\lm_\va$. Particularly,  the estimates of the correctors and flux correctors, as well as some useful estimates of the smoothing operators are presented. In Section 3, we perform the two-scale expansion to approximate the solution of $\pa_t u_\va+\mathcal{L}^\lm_\va u_\va=F$ by solutions of the homogenized problem $\pa_t u_0+\mathcal{L}^\lm_0 u_0=F$, based on which, in Section 4  we establish the uniform large-scale interior Lipschitz estimate for the operator  $\pa_t+\mathfrak{L}_\va$. Moreover, we prove the full-scale Lipschitz estimates for the operators with only spatial and temporal oscillations in \eqref{ilva}.  Section 5 is devoted to the uniform boundary Lipschitz estimates for the operators in \eqref{eq1} and \eqref{ilva}. Armed with the results above, we complete the proof of Theorems \ref{thm1} and \ref{b-thm2} in Section 6. Finally, in Section 7 we discuss the convergence rates and provide the proof of Theorem \ref{cothm}.

 Throughout the paper, we use $\fint_E u$ to denote the $L^1$ average of $u$ over the set $E$, i.e.,
$\fint_E u=\frac{1}{|E|}\int_E u $. To simplify the notation, we assume $n=1$ hereafter. Yet, all our analysis extends directly to the case $n>1$ (parabolic systems), since no particular result affiliate to the scalar case is ever used.

\section{Preliminaries}

\subsection{Homogenization of $\pa_t +\mathcal{L}^\lm _\va$}
We investigate qualitative homogenization for the operator $\pa_t +\mathfrak{L}_\va$ in \eqref{eq1}.
 Consider the operator  \begin{align}\label{Llm}
 \pa_t +\mathcal{L}^\lm_\va =\pa_t -\text{div} ( A^\lm (x,t,x/\va, t/ \va^2) \na ),\end{align}
where $A^\lm =A^\lm(x,t,y,s) =A(x,t,y,s/\lm )$ for any $x,y\in \R^{d}$ and $t,s\in \R$. Assume that $A=A(x,t,y,s)$  satisfies \eqref{cod1} and \eqref{cod2}.
Then the matrix $A^\lm$ satisfies \eqref{cod1} with the same constant, and is $(1,\lm)$-periodic in $(y, s)$, i.e.,
\begin{align}
A^\lm (x,t,y,s)= A^\lm(x,t,y+z, s+\lm \tau)
\end{align}
 for any  $(z,\tau)\in \mathbb{Z}^{d+1} \text{ and }\, a.e.\, x,y\in \R^d, s,t\in \mathbb{R}.$

For  $1\leq i,j\leq d$, let $\chi^\lm (x,t,y,s)= (\chi^\lm _{j}(x,t,y,s))$ be the weak solution of the following cell problem
 \begin{equation}\label{chi}
\begin{cases}
 \pa_s \chi_{j}^\lm  (x,t,y,s)   -    \pa_{y_i} ( A^\lm _{ik}   \pa_{y_k}  \chi^\lm _{j} (x,t,y,s)  )
 =    \pa_{y_i}  A^\lm _{ij}(x,t,y,s)
 ~~    \text{ in } \R^{d+1},\\
 \chi^\lm _{j}  (x,t,y,s)\quad\text{is } (1,\lm )\text{-periodic in } (y,s), \\
 \int_0^\lm\!  \int_{\mathbb{T}^{d}} \chi^\lm _{j} (x,t,y,s)\, dy\,ds=0, \quad 1\leq j\leq d.
 \end{cases}
\end{equation}
Standard energy estimate  and Poincar\'{e}'s inequality imply that
\begin{align}& \fint_0^\lm \!\!\int_{\mathbb{T}^{d}} |\na_y \chi^\lm (x,t,y,s)|^2dyds+ \fint_0^\lm \!\! \int_{\mathbb{T}^{d}} |\chi^\lm (x,t,y,s)|^2dyds \leq C. \label{lecore0}\end{align}
Define
\begin{align}\label{alamhat}
\widehat{A^\lm }(x,t)=\fint_0^\lm \!\!\!\int_{\mathbb{T}^{d}} \big(A^\lm (x,t,y,s) +A^\lm (x,t,y,s) \na_{y}\chi^\lm (x,t,y,s)\big )dyds.
\end{align}
In view of \eqref{lecore0}, we know that $\|\widehat{A^\lm } \|_{\infty}=\|\widehat{A^\lm }(x,t)\|_{L^\infty(\R^{d+1})}\leq C $ for some $C$ depending only on $d,\mu$.
Moreover, it is not difficult to prove that
\begin{align}\label{almecon}
 \mu |\xi|^2\leq \widehat{A^\lm_{ij}}(x,t) \xi_i\xi_j \quad \text{ for any } \xi \in \R^d \text{ and  } a.e.~(x,t) \in \R^{d+1}.
\end{align}
Thanks to \cite{lions1978}, for each fixed $\lm>0$ the homogenized operator of $\pa_t +\mathcal{L}^\lm _\va$  is given by
\begin{align}\label{nonl0}
\pa_t+\mathcal{L}^\lm _0=\pa_t-\text{div} (\widehat{A^\lm }(x,t) \na ).
\end{align}

\begin{lemma}\label{lemma-2.2}Suppose $A=A(x,t,y,s) $  satisfies \eqref{cod1} and \eqref{cod2}.
Let $\chi^\lambda$ be given by (\ref{chi}).
Then there exists $q>2$, depending on $d$ and $\mu$, such that
\begin{equation}\label{2.00}
\left(\fint_0^\lambda \!\!\!\int_{\mathbb{T}^d}
|\nabla_y \chi^\lambda(x,t,y,s)|^q\, dy ds \right)^{1/q}
\le C
\end{equation} for a.e. $(x,t)\in \R^{d+1}$,
where $C$ depends only on $d$ and $\mu$.
\end{lemma}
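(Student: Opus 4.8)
The plan is to derive \eqref{2.00} from a reverse Hölder inequality for $\nabla_y\chi^\lambda$, obtained by combining the Caccioppoli inequality for the cell equation \eqref{chi} with the Sobolev–Poincaré inequality, and then invoking Gehring's lemma (the self-improving property of reverse Hölder inequalities). The key point, which must be tracked carefully, is that all constants are uniform in $\lambda$ and in $(x,t)$ — this is exactly why one works with the rescaled torus $\mathbb{T}^d\times(0,\lambda)$ rather than trying to use $\lambda$-dependent scalings.

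First I would fix $(x,t)$ and regard \eqref{chi} as a parabolic equation in $(y,s)$ with bounded measurable coefficients $A^\lambda(x,t,\cdot,\cdot)$ satisfying \eqref{cod1} with the same ellipticity constant $\mu$. The right-hand side is in divergence form with bounded data $A^\lambda_{ij}$, so $|\mathrm{RHS \ data}|\le \mu^{-1}$. For a parabolic cylinder $Q_r(y_0,s_0)$ (with the understanding that $s$ lives on the circle $\R/\lambda\mathbb{Z}$, so $r\le c\sqrt{\lambda}$, and for larger $r$ one simply uses the whole cell and the normalization $\int\chi^\lambda=0$), the Caccioppoli inequality gives
\begin{align*}
\fint_{Q_{r/2}}|\nabla_y\chi^\lambda|^2
\le \frac{C}{r^2}\fint_{Q_r}\Big|\chi^\lambda - \langle\chi^\lambda\rangle_{Q_r}\Big|^2
+ C\fint_{Q_r}|A^\lambda|^2 .
\end{align*}
Then the parabolic Sobolev–Poincaré inequality — using the equation to control $\partial_s\chi^\lambda$ in a negative Sobolev norm — bounds $r^{-2}\fint_{Q_r}|\chi^\lambda-\langle\chi^\lambda\rangle|^2$ by $\big(\fint_{Q_{2r}}|\nabla_y\chi^\lambda|^{2\bar q}\big)^{1/\bar q}$ plus a lower-order term involving $\|A^\lambda\|_\infty$, for some $\bar q=\bar q(d)<1$. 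This yields the reverse Hölder inequality
\begin{align*}
\Big(\fint_{Q_r}|\nabla_y\chi^\lambda|^2\Big)^{1/2}
\le C\Big(\fint_{Q_{2r}}|\nabla_y\chi^\lambda|^{2\bar q}\Big)^{1/(2\bar q)}
+ C\|A^\lambda\|_{L^\infty},
\end{align*}
with $C$ depending only on $d$ and $\mu$ (crucially not on $\lambda$, since $\|A^\lambda\|_\infty\le\mu^{-1}$ and the covering/scaling arguments are scale-invariant once we work on the cell of size $\sim 1$ in space and $\sim\lambda$ in time).

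Next I would apply Gehring's lemma in its parabolic form (on the product space $\mathbb{T}^d\times(\R/\lambda\mathbb{Z})$, equipped with the parabolic metric): from the reverse Hölder inequality above one obtains an exponent $q>2$, depending only on $d$ and $\mu$, and a constant $C=C(d,\mu)$ such that
\begin{align*}
\Big(\fint_0^\lambda\!\!\!\int_{\mathbb{T}^d}|\nabla_y\chi^\lambda|^q\,dy\,ds\Big)^{1/q}
\le C\Big(\fint_0^\lambda\!\!\!\int_{\mathbb{T}^d}|\nabla_y\chi^\lambda|^2\,dy\,ds\Big)^{1/2} + C,
\end{align*}
and the right-hand side is bounded by an absolute constant depending only on $d$ and $\mu$ by the energy estimate \eqref{lecore0}. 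Since all the constants in the chain are uniform in $(x,t)$, taking the supremum over $(x,t)$ finishes the proof.

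The main obstacle I anticipate is the bookkeeping of $\lambda$-uniformity in the Gehring step: the underlying measure space is $\mathbb{T}^d\times(\R/\lambda\mathbb{Z})$, whose total parabolic "diameter" degenerates as $\lambda\to 0$ and blows up as $\lambda\to\infty$, so one must verify that the version of Gehring's lemma being used is genuinely scale-free — i.e., that it applies on any such torus with a constant independent of $\lambda$. This is handled by checking that the reverse Hölder inequality holds on all parabolic subcylinders up to a fixed fraction of the whole space, with a constant independent of $\lambda$, which is exactly what the Caccioppoli-plus-Sobolev argument above provides once one normalizes correctly; the large-$r$ (comparable to the whole cell) case is where the zero-average condition $\int_0^\lambda\!\int_{\mathbb{T}^d}\chi^\lambda=0$ and \eqref{lecore0} are used in place of a local Poincaré inequality. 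No genuinely new idea beyond the classical Meyers estimate is needed, but the uniformity must be stated and used explicitly since it is what later sections rely on.
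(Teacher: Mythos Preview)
Your proposal is correct and follows the standard Meyers argument (Caccioppoli plus parabolic Sobolev--Poincar\'e yielding a reverse H\"older inequality, then Gehring's lemma), which is precisely what the paper invokes by citing the Meyers-type estimates for parabolic systems in \cite{Armstrongapde-2018} and referring to \cite{gsamar2020} for the $\lambda$-uniform details. Your careful tracking of the $\lambda$-independence on the torus $\mathbb{T}^d\times(\R/\lambda\mathbb{Z})$ is exactly the point that makes the cited result applicable here.
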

\begin{proof}
The desired estimate is a consequence of the Meyers-type estimates for parabolic systems (see e.g. \cite[Appendix]{Armstrongapde-2018}). We refer \cite{gsamar2020} for the detailed proof.
\end{proof}

%\begin{proof} The proof is almost the same as the one for Lemma 2.2 in \cite{gsamar2020}. Let us omit the details here.
%Let $u(x,t,y, s)=y_j +\chi_j^\lambda(x,t,y,s)$.
%Then    for a.e.  $(x,t) \in \R^{d+1}$, $$
%\partial_s u  -\text{\rm div} ( A^\lambda(x,t,y,s) \nabla_y u ) =0   \quad \text{ in } \mathbb{R}^{d+1} .
%$$
%By  Meyers-type estimates for parabolic systems (see e.g. \cite[Appendix]{Armstrongapde-2018}),
% there exist $q>2$ and $C>0$, depending only on $d$ and $\mu$, such that
%\begin{equation}\label{2.01}
%\left(\fint_{Q_r} |\nabla_y u(x,t,y,s)|^q\, dyds \right)^{1/q}
%\le C \left(\fint_{Q_{2r}} |\nabla_y u(x,t,y,s)|^2\, dyds \right)^{1/2}
%\end{equation}
%for any $Q_r =Q_r (y', s')$ and a.e. $(x,t)\in \R^{d+1}$.
%It follows that
%\begin{equation}\label{2.02}
%\left(\fint_{Q_r} |\nabla_y \chi_j^\lambda |^q\, dyds \right)^{1/q}
%\le  C+ C \left(\fint_{Q_{2r}} |\nabla_y \chi_j^\lambda |^2\, dyds \right)^{1/2}.
%\end{equation}
%Choose $r>1+\sqrt{\lambda}$ so large that $\mathbb{T}^d \times (0, \lambda)\subset Q_r(0,0)$.
%Since $\nabla_y \chi_j^\lambda$ is 1-periodic in $y$ and $\lambda$-periodic in $s$, we obtain
%$$
%\aligned
%\left(\fint_0^\lambda\!\!\!   \int_{\mathbb{T}^d}
%|\nabla_y \chi_j^\lambda|^q \, dyds \right)^{1/q}
%& \le C \left(\fint_{Q_r(0,0)}
%|\nabla_y \chi_j^\lambda|^q \, dyds \right)^{1/q}\\
%& \le C + C \left(\fint_{Q_{2r}(0,0)}
%|\nabla_y \chi_j^\lambda |^2\, dyds \right)^{1/2}\le C
%\endaligned
%$$ for a.e. $(x,t)\in \R^{d+1}$,
%where we have used \eqref{lecore0} for the last step.
%\end{proof}

\begin{lemma}\label{le-co}
Assume that $A$ satisfies conditions \eqref{cod1}, \eqref{cod2}, and \eqref{cod3} for some $\theta\in (0,1]$ and $L>0.$ Then
\begin{align}
& \fint_0^\lm\!\!  \int_{\mathbb{T}^{d}} |\na_y\chi^\lm (x,t,y,s)\!-\!\na_y \chi^\lm (x',t',y,s)|^2dyds  \leq C L^2 (|x-x'|+|t-t'|^{1/2})^{2\theta}, \label{lecore1}\\
&|\widehat{A^\lm }(x,t)-\widehat{A^\lm }(x',t')|\leq  C L (|x-x'|+|t-t'|^{1/2})^\theta \label{lecore2}
\end{align}
for any    $x,x'\in \R^{d}$ and $t,t'\in \R$, where $C$ depends only on $d$ and $\mu$.
\end{lemma}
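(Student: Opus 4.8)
The plan is to control the difference of the correctors based at two points by an energy estimate, taking care that all constants stay independent of $\lm$. First I would fix $x,x'\in\R^d$ and $t,t'\in\R$ and set $w_j=\chi^\lm_j(x,t,\cdot,\cdot)-\chi^\lm_j(x',t',\cdot,\cdot)$, which is $(1,\lm)$-periodic in $(y,s)$ and has mean zero over $\mathbb{T}^d\times(0,\lm)$. Subtracting the two copies of the cell problem \eqref{chi} and rewriting the flux difference through the identity $A^\lm_{ik}(x,t)\pa_{y_k}\chi^\lm_j(x,t)-A^\lm_{ik}(x',t')\pa_{y_k}\chi^\lm_j(x',t')=A^\lm_{ik}(x,t)\pa_{y_k}w_j+\big(A^\lm_{ik}(x,t)-A^\lm_{ik}(x',t')\big)\pa_{y_k}\chi^\lm_j(x',t')$, one finds that $w_j$ is the $(1,\lm)$-periodic weak solution of
\[
\pa_s w_j-\pa_{y_i}\big(A^\lm_{ik}(x,t,y,s)\,\pa_{y_k}w_j\big)=\pa_{y_i}f_{ij}\quad\text{in }\R^{d+1},
\]
where $f_{ij}=\big(A^\lm_{ij}(x,t,y,s)-A^\lm_{ij}(x',t',y,s)\big)+\big(A^\lm_{ik}(x,t,y,s)-A^\lm_{ik}(x',t',y,s)\big)\pa_{y_k}\chi^\lm_j(x',t',y,s)$.

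Next I would run the standard energy estimate: testing this equation against $w_j$ over $\mathbb{T}^d\times(0,\lm)$, the crucial point is that the $\lm$-periodicity in $s$ forces $\fint_0^\lm\int_{\mathbb{T}^d}(\pa_s w_j)\,w_j=\tfrac12\fint_0^\lm\int_{\mathbb{T}^d}\pa_s(w_j^2)=0$, so the time-derivative term---and with it any $\lm$-scaling---drops out of the identity. The ellipticity in \eqref{cod1} and the Cauchy--Schwarz inequality then give $\big(\fint_0^\lm\int_{\mathbb{T}^d}|\na_y w|^2\big)^{1/2}\le\mu^{-1}\big(\fint_0^\lm\int_{\mathbb{T}^d}|f|^2\big)^{1/2}$. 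Since $A^\lm(x,t,y,s)-A^\lm(x',t',y,s)=A(x,t,y,s/\lm)-A(x',t',y,s/\lm)$, the H\"older condition \eqref{cod3} supplies the pointwise bound $|A^\lm(x,t,y,s)-A^\lm(x',t',y,s)|\le L(|x-x'|+|t-t'|^{1/2})^\theta$, uniform in $(y,s)$ and in $\lm$; plugging this into the formula for $f$ and using the a priori estimate \eqref{lecore0} for $\na_y\chi^\lm(x',t',\cdot,\cdot)$ yields $\big(\fint_0^\lm\int_{\mathbb{T}^d}|f|^2\big)^{1/2}\le CL(|x-x'|+|t-t'|^{1/2})^\theta$ with $C=C(d,\mu)$. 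Squaring gives \eqref{lecore1}; the higher integrability of Lemma \ref{lemma-2.2} is not needed here, the $L^2$ bound suffices.

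Finally, for \eqref{lecore2} I would decompose, using \eqref{alamhat},
\[
\widehat{A^\lm}(x,t)-\widehat{A^\lm}(x',t')=\fint_0^\lm\!\int_{\mathbb{T}^d}\!\Big[\big(A^\lm(x,t)-A^\lm(x',t')\big)+A^\lm(x,t)\na_y w+\big(A^\lm(x,t)-A^\lm(x',t')\big)\na_y\chi^\lm(x',t')\Big],
\]
and estimate the three contributions by, in order, the pointwise H\"older bound above, the $L^\infty$ bound \eqref{cod1} combined with \eqref{lecore1}, and the pointwise H\"older bound combined with \eqref{lecore0}; each is $O\big((|x-x'|+|t-t'|^{1/2})^\theta\big)$ with a constant depending only on $d,\mu$ (and $L$), which is \eqref{lecore2}. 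I do not expect a real obstacle in this lemma: the single point requiring care---and the reason the estimates hold with constants independent of $\lm$---is the use of the temporal periodicity to annihilate the $\pa_s$ term in the energy identity; the rest is routine perturbation-of-coefficients bookkeeping for the corrector equation.
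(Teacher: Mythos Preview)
Your proposal is correct and follows essentially the same approach as the paper: the paper also subtracts the two cell problems to obtain the equation for the difference $\chi^\lm_j(x,t,\cdot,\cdot)-\chi^\lm_j(x',t',\cdot,\cdot)$ with right-hand side $\text{div}_y\big\{(A^\lm(x,t,y,s)-A^\lm(x',t',y,s))\na_y(\chi^\lm_j(x',t',y,s)+y_j)\big\}$, invokes the standard energy estimate together with \eqref{cod3} for \eqref{lecore1}, and then reads off \eqref{lecore2} from the definition of $\widehat{A^\lm}$ and \eqref{lecore1}. Your write-up is in fact more explicit than the paper's sketch, in particular in identifying the periodicity in $s$ as the mechanism that makes the energy bound independent of $\lm$.
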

\begin{proof} Note that if $A$ satisfies \eqref{cod3}, $A^\lm$ satisfies \eqref{cod3} with the same constant.
The estimate \eqref{lecore1} follows from \eqref{cod3} and the standard energy estimates for
\begin{align}
&\pa_s \big(\chi^\lm_j(x,t,y,s)-\chi^\lm_j(x',t',y,s)\big) - \text{div}_y \big\{A^\lm(x,t,y,s)\na_y \big(\chi^\lm_j(x,t,y,s)-\chi^\lm_j(x',t',y,s) \big)\big\}\nonumber\\
&= \text{div}_y \big\{ \big(A^\lm(x,t,y,s)-A^\lm(x',t',y,s)\big) \na_y(  \chi^\lm_j(x',t',y,s) + y_j) \big\},\nonumber
\end{align}
while \eqref{lecore2} is a direct consequence of the definition of $\widehat{A^\lm}(x,t)$ and \eqref{lecore1}.
\end{proof}

To define the homogenized operator of \eqref{eq1}, for $1\leq j\leq d $ we introduce the correctors $\chi^\infty=(\chi^\infty_j(x,t,y,s))$ and $\chi^0=(\chi^0_j(x,t,y,s))$ given respectively  by
\begin{equation} \label{chiinfty}
\begin{cases}
-\text{\rm div}_y \big( A(x,t,y,s)\nabla_y \chi_j^\infty(x,t,y,s)) =\text{\rm div}_y(A(x,t,y,s)\nabla_y y_j) \quad \text{ in } \mathbb{R}^{d},\\
\chi_j^\infty(x,t,y,s) \text{ is 1-periodic in } (y, s),\\
 \int_{\mathbb{T}^d} \chi_j^\infty (x,t,y, s)\, dy=0,
\end{cases}
\end{equation}
and
\begin{equation}\label{chi0}
\begin{cases}
  -\text{\rm div}_y \left( \overline{A}(x,t,y)\nabla_y \chi_j^0(x,t,y) \right)=\text{\rm div}_y \left(\overline{A}(x,t,y)\nabla_y y_j\right) \quad \text{ in } \mathbb{R}^d,\\
 \chi_j^0(x,t,y) \text{ is 1-periodic in } y,\\
 \int_{\mathbb{T}^d} \chi_j^0(x,t,y)\, dy =0,
\end{cases}
\end{equation}
 where $
 \overline{A}(x,t,y) =\int_0^1 A(x,t,y,s) ds.
 $
By standard energy estimates and Poincar\'{e}'s inequality,
\begin{align}
\int_{\mathbb{T}^d} \big(|\na_y\chi_j^\infty (x,t,y, s)|^2+|\chi_j^\infty (x,t,y, s)|^2\big)dy\leq C, \label{esti-chiinfty}\\
\int_{\mathbb{T}^d} \big(|\na_y\chi_j^0 (x,t,y)|^2+|\chi_j^0 (x,t,y)|^2\big)dy\leq C\label{esti-chi0}
\end{align}
for a.e. $(x,t) \in \R^{d+1}$ and $s\in \R.$
Define
\begin{align}
&\widehat{A^\infty}(x,t)=\int_0^1\!\!\!\int_{\mathbb{T}^d} (A(x,t,y,s)+A(x,t,y,s)\na_y \chi^\infty(x,t,y,s)) dyds,\label{Ainftyhat}\\
&\widehat{A^0}(x,t)
 =\int_0^1  \!\!\! \int_{\mathbb{T}^d}
\left( A(x,t,y,s) + A(x,t,y,s) \nabla_y \chi^0(x,t,y) \right) dyds .\label{A0hat}
\end{align}
In view of \eqref{esti-chiinfty} and \eqref{esti-chi0}, we know that
  $\|\widehat{A^\infty}\|_{\infty}\leq  C$ and $\|\widehat{A^0}\|_{\infty}\leq C$, where $C$ depends only on $d,\mu$; and similar to \eqref{almecon}, one has
\begin{align}
\widehat{A^\infty_{ij}}(x,t)\xi_i\xi_j \geq \mu |\xi|^2 \quad\text{and}\quad
\widehat{A^0_{ij}}(x,t)\xi_i\xi_j \geq \mu |\xi|^2
\end{align}
 for any  $\xi \in \R^d$ and  a.e.  $(x,t) \in \R^{d+1}$.

 \begin{lemma}\label{lemconA}
Suppose $A=A(x,t,y,s)$  satisfies \eqref{cod1} and \eqref{cod2}.
  Let $\widehat{A^\lambda}$ be defined as in \eqref{alamhat}.
  Then \begin{align}
 &\|\widehat{A^\lambda }- \widehat{A^\infty} \|_{\infty}
\leq C \lm^{-1}\|\pa_s A\|_{\infty}  \, \,\text{ for } 1\le \lm<\infty, \text{ if } \|\pa_s A\|_{\infty} < \infty, \label{lemconAre1}\\
&\|\widehat{A^{\lambda_1} }-\widehat{A^{\lambda_2} }  \|_{\infty}\leq  C   \big|1- \lm_2 \lm_1^{-1}\big|    \, \,\text{ for  }
 0< \lambda_1, \lambda_2<\infty,  \label{lemconAre2} \\
&\|\widehat{A^\lambda }-\widehat{A^0} \|_{\infty}
 \leq
   C\lm \big(\|\na^2_y A\|_{\infty} + \| \na_y A\|^2_{\infty}\big)   \, \,\text{ for  }  0< \lambda\leq 1,  \text{ if } \|\na^2_y A\|_{\infty}<\infty, \label{lemconAre3}
 \end{align}
 where $C$ depends only on $d$ and $\mu$.
 \end{lemma}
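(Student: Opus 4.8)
The plan is to compare the homogenized matrices by comparing the correctors $\chi^\lm$, $\chi^\infty$, $\chi^0$ and then exploiting the defining PDEs \eqref{chi}, \eqref{chiinfty}, \eqref{chi0}, with the key point that all three correctors satisfy divergence-form equations in $y$ with the same type of energy structure. Throughout, $(x,t)$ is fixed and plays the role of a parameter, so I will suppress it. For \eqref{lemconAre2} I would subtract the equations for $\chi^{\lm_1}$ and the rescaled $\chi^{\lm_2}$: setting $\widetilde\chi(y,s)=\chi^{\lm_2}_j(y,\lm_1\lm_2^{-1}s)$ one checks that $\widetilde\chi$ is $(1,\lm_1)$-periodic and solves the $\chi^{\lm_1}$-type equation but with $\pa_s$ replaced by $\lm_2\lm_1^{-1}\pa_s$; hence $w=\chi^{\lm_1}_j-\widetilde\chi$ satisfies $\pa_s w-\pa_{y_i}(A^{\lm_1}_{ik}\pa_{y_k}w)=(\lm_2\lm_1^{-1}-1)\pa_s\widetilde\chi$ (one first needs a uniform bound on $\pa_s\chi^{\lm_2}$ in a negative-order norm, which follows from testing its equation; alternatively rewrite the time-derivative term so the right side is of the form $(1-\lm_2\lm_1^{-1})$ times something bounded in $H^{-1}_y$). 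A standard energy estimate in $y$, together with Poincaré in $y$ and the $(1,\lm_1)$-periodicity, gives $\fint_0^{\lm_1}\!\int_{\mathbb T^d}|\na_y w|^2\le C|1-\lm_2\lm_1^{-1}|^2$, and plugging into the definition \eqref{alamhat} of $\widehat{A^\lm}$ yields \eqref{lemconAre2}.

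For \eqref{lemconAre1}, I would view $\chi^\infty$ (which is $s$-independent in the sense that it solves an elliptic problem for each frozen $s$) against $\chi^\lm$ on the long cell $(0,\lm)$. Write $A^\lm(y,s)=A(y,s/\lm)$; as $\lm\to\infty$ the time-oscillation is slow. The cleanest route is: integrate the equation for $\chi^\lm$ over $s\in(0,\lm)$ and compare with $\chi^\infty$; the discrepancy is controlled by the oscillation of $A$ in $s$, which is $O(\|\pa_s A\|_\infty)$, and the $\lm^{-1}$ gain comes from the averaging $\fint_0^\lm$ applied to the $\pa_s\chi^\lm$ term after integration by parts in $s$ (the boundary terms cancel by $\lm$-periodicity, leaving a factor $\lm^{-1}$). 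Again an energy estimate plus Poincaré converts this into an $L^2$-bound on $\na_y(\chi^\lm-\chi^\infty)$ of size $C\lm^{-1}\|\pa_s A\|_\infty$, and \eqref{lemconAre1} follows from \eqref{alamhat} and \eqref{Ainftyhat}. Estimate \eqref{lemconAre3} is the mirror image: as $\lm\to 0^+$ the time variable oscillates fast, so $A^\lm(y,s)=A(y,s/\lm)$ homogenizes in $s$ to its $s$-average $\overline A(y)$, and $\chi^\lm$ converges to $\chi^0$. Here I expect a two-term expansion in $s$ to be needed: writing $A(y,s/\lm)=\overline A(y)+\big(A(y,s/\lm)-\overline A(y)\big)$ and introducing the antiderivative in $s$ of the oscillatory part (which costs one factor of $\lm$ and one $y$-derivative, hence the $\|\na_y^2 A\|_\infty$), one gets the error $C\lm(\|\na_y^2A\|_\infty+\|\na_y A\|_\infty^2)$, the quadratic term arising from the product of two correctors / the second-order term in the expansion.

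The main obstacle, in my view, is handling the $\pa_s$-term correctly in the regimes $\lm\to\infty$ and $\lm\to 0$: one cannot naively differentiate the corrector, so the arguments must be organized so that $\pa_s\chi^\lm$ appears only tested against periodic functions (where integration by parts in $s$ produces the decisive power of $\lm$) or only in $H^{-1}_y$. Making the quadratic term $\|\na_y A\|_\infty^2$ appear in \eqref{lemconAre3} with the right power of $\lm$ requires care in the second-order $s$-expansion, since the flux-corrector-type auxiliary function there is itself only as regular as $\na_y A$. Once the corrector differences are controlled in $L^2$ in $y$ (uniformly, resp.\ with the stated rates), passing to $\widehat{A^\lm}$ is immediate from the definitions \eqref{alamhat}, \eqref{Ainftyhat}, \eqref{A0hat} together with the uniform bounds \eqref{lecore0}, \eqref{esti-chiinfty}, \eqref{esti-chi0} and Cauchy--Schwarz.
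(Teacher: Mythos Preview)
Your proposal is correct and follows essentially the same strategy as the paper. For \eqref{lemconAre2}, the paper proceeds exactly as you describe: it rescales both correctors to the unit cell $\mathbb{T}^{d+1}$ by considering $\chi^{\lambda_1}(y,\lambda_1 s)$ and $\chi^{\lambda_2}(y,\lambda_2 s)$, subtracts the equations so that the right-hand side is $(\lambda_1^{-1}-\lambda_2^{-1})\,\partial_s\big(\chi^{\lambda_2}(y,\lambda_2 s)\big)$, and then tests against the difference, using that the time-derivative term integrates to zero by periodicity; the factor $\partial_s\chi^{\lambda_2}$ is controlled in $L^2(0,\lambda_2;H^{-1}_{per}(\mathbb{T}^d))$ directly from the equation \eqref{chi}, which after the change of variables produces exactly the factor $|1-\lambda_2\lambda_1^{-1}|$. (A small slip in your write-up: the rescaling that makes $\widetilde\chi$ genuinely $(1,\lambda_1)$-periodic and matches the coefficient $A^{\lambda_1}$ is $\widetilde\chi(y,s)=\chi^{\lambda_2}_j(y,\lambda_2\lambda_1^{-1}s)$, not $\lambda_1\lambda_2^{-1}s$; this only flips which of $|1-\lambda_1/\lambda_2|$ or $|1-\lambda_2/\lambda_1|$ appears, and by symmetry in $\lambda_1,\lambda_2$ either suffices.) For \eqref{lemconAre1} and \eqref{lemconAre3} the paper simply refers to Theorems~2.3 and~2.5 of \cite{gsamar2020}, whose arguments are precisely the slow/fast time-oscillation comparisons you outline.
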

\begin{proof} The results in \eqref{lemconAre1} and \eqref{lemconAre3} correspond to Theorems 2.3 and 2.5 in \cite{gsamar2020}, and the proofs are almost the same. We provide the  proof of \eqref{lemconAre2} for the convenience of the reader.
Note that
\begin{align}\label{pro-prelem1-4}
\begin{split}
&| \widehat{A^{\lambda_1}}(x,t)-\widehat{A^{\lm_2}}(x,t)|\\
&= \int_{\mathbb{T}^{d+1}}
A (x,t,y, s) \nabla_y  \{ \chi^{\lambda_1} (x,t,y, \lambda_1 s)-\chi^{\lm_2}(x,t,y, \lm_2s) \} dyds\\
&\le C \Big( \int_{\mathbb{T}^{d+1}}
\big| \nabla_y \{ \chi^{\lm_1} (x,t,y, \lambda_1 s)-\chi^{\lm_2} (x,t,y,\lm_2 s) \} \big|^2   dyds\Big)^{1/2}.
\end{split}
\end{align}
By the definition of $\chi^\lm$,
\begin{align}\label{pro-prelem1-5}
\begin{split}
 &\text{\rm div}_y \big\{ A(x,t,y, s) \nabla_y \big( \chi_j^{\lambda_1} (x,t,y, \lambda_1 s) -\chi_j^{\lm_2} (x,t,y, \lm_2s) \big) \big\}\\
 &=\Big(\frac{1}{\lambda_1}-\frac{1}{\lambda_2}\Big)\pa_s \big(\chi^{\lm_2}(x,t,y,\lm_2s)\big)+\frac{1}{\lambda_1} \big\{\pa_s \big( \chi^{\lm_1}(x,t,y,\lm_1s)\big) -\pa_s \big(\chi^{\lm_2}(x,t,y,\lm_2s)\big)\big\}
\end{split}
\end{align}
in $\mathbb{T}^{d+1}.$
By \eqref{cod1} and the fact that
\begin{align*}
\int_{\mathbb{T}^{d+1}}
 \pa_s
\big( \chi_j^{\lambda_1} (x,t,y, \lambda_1 s) -\chi_j^{\lm_2}(x,t,y, \lm_2 s)  \big) \cdot \big( \chi_j^{\lm_1} (x,t,y, \lambda_1 s) -\chi_j^{\lm_2}  (x,t,y, \lm_2 s) \big)\, dyds =0,
\end{align*}
we deduce that
\begin{align}\label{pro-prelem1-00}
\begin{split}
 & \mu \int_{\mathbb{T}^{d+1}}
\big|\nabla_y \big(\chi_j^{\lambda_1} (x,t,y, \lambda_1 s) -\chi_j^{\lm_2} (x,t,y, \lm_2 s) \big) \big|^2\, dyds\\
%& \le
% \Big|\frac{1}{\lambda }
%\int_{\mathbb{T}^{d+1}}
%\frac{\partial}{\partial s}
%\big\{  \chi_j^\lambda (x,t,y, \lambda s) \big\} \cdot \big\{ \chi_j^\lambda (x,t,y, \lambda s) -\chi_j ^\infty (x,t,y, s) \big\}  \, dyds\Big|\\
%&
&\leq \Big|\Big(\frac{1}{\lambda_1}-\frac{1}{\lambda_2}\Big)
\int_{\mathbb{T}^{d+1}}
\frac{\partial}{\partial s}
 \chi_j^{\lm_2} (x,t,y, \lm_2 s)  \cdot \big(\chi_j^{\lambda_1} (x,t,y, \lambda_1 s) -\chi^{\lm_2} _j (x,t,y, \lm_2 s) \big)  \, dyds\Big|.
 \end{split}
\end{align}
 Since
$$
\int_{\mathbb{T}^d}
\chi_j^{\lambda_i} (x,t,y, \lambda_i s)\, dy
=0, \qquad i=1,2.
$$
By Poincar\'e's inequality and the Cauchy inequality,
\begin{align*}
  &\Big(\int_{\mathbb{T}^{d+1}}
\big|\nabla_y \big( \chi_j^{\lambda_1} (x,t,y, \lambda_1 s) -\chi_j^{\lm_2} (x,t,y,\lm_2 s) \big) \big|^2\, dyds\Big)^{1/2} \\
&\le C \big| \lambda_1^{-1}- \lambda_2^{-1}\big| \Big( \int_0^1
\|\partial_s  \chi^{\lm_2}(x,t,y,\lm_2 s) \|^2_{H^{-1}_{per}(\mathbb{T}^d)} ds \Big)^{1/2} .
\end{align*}
In view of \eqref{pro-prelem1-4}, this implies that
\begin{equation}\label{pro-prelem1-6}
|\widehat{A^{\lambda_1}}(x,t) -\widehat{A^{\lm_2}}(x,t)|
\le C \big|\lm_2^{-1} -\lambda_1^{-1}\big| \|\partial_s  \chi^{\lm_2}(x,t) \|_{L^2(0,\lm_2; H^{-1}_{per}(\mathbb{T}^d))},
\end{equation}
for any $(x,t)\in \R$, where $C$ depends only on $d$ and $\mu$. Thanks to \eqref{chi}, we have
\begin{align}\label{pro-prelem1-7}
\begin{split}
\|\partial_s  \chi^{\lm_2}(x,t) \|_{L^2(0,\lm_2; H^{-1}_{per}(\mathbb{T}^d))}
&\le C \|A^{\lm_2}(x,t)\|_{L^2(0,\lm_2; L^2(\mathbb{T}^d))}\\
&\quad+ C\|A^{\lm_2}(x,t) \na_y \chi^{\lm_2}(x,t)\|_{L^2(0,\lm_2; L^2(\mathbb{T}^d))},
\end{split}
\end{align}
which, combined with \eqref{lecore0}
and \eqref{pro-prelem1-6}, gives \eqref{lemconAre2}.

\end{proof}

Define
 \begin{align}\label{ahat}
\widehat{A}(x,t)=\left\{
 \aligned
& \widehat{A^0} (x,t)& \quad & \text{ if }  \rho=0,\\
& \widehat{A^\rho}(x,t) & \quad & \text{ if }   0<\rho<\infty,\\
& \widehat{A^\infty}(x,t) & \quad & \text{ if } \rho=\infty.
 \endaligned
 \right.
 \end{align}
\begin{lemma}\label{ahattoa}  Suppose  $A=A(x,t,y,s)$  satisfies \eqref{cod1} and \eqref{cod2}.
Then $\|\widehat{A^\lambda} -\widehat{A}\|_{\infty} \rightarrow 0$ as $\lambda \to \rho$.
\end{lemma}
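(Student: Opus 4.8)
The plan is to split into the three regimes $\rho=0$, $0<\rho<\infty$, and $\rho=\infty$ and invoke the quantitative estimates for $\widehat{A^\lambda}-\widehat{A}$ established in Lemma \ref{lemconA}, plus a continuity argument in the middle regime. In each case the conclusion $\|\widehat{A^\lambda}-\widehat A\|_\infty\to 0$ as $\lambda\to\rho$ is a matter of reading off the relevant estimate and checking that its right-hand side tends to $0$. The only subtlety is that the finiteness hypotheses ($\|\pa_s A\|_\infty<\infty$, $\|\na_y^2 A\|_\infty<\infty$) needed for \eqref{lemconAre1} and \eqref{lemconAre3} are \emph{not} assumed in this lemma, so for the cases $\rho=0$ and $\rho=\infty$ one cannot use those bounds directly; instead I would fall back on the soft convergence statements already available. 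The hard part is really just organizing the argument so that no unjustified smoothness is used.

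For $0<\rho<\infty$, set $\widehat A=\widehat{A^\rho}$. Apply \eqref{lemconAre2} with $\lambda_1=\lambda$ and $\lambda_2=\rho$ (both finite and positive once $\lambda$ is close enough to $\rho$): this gives
\begin{align*}
\|\widehat{A^\lambda}-\widehat{A^\rho}\|_\infty\le C\,\big|1-\rho\lambda^{-1}\big|=C\,\frac{|\lambda-\rho|}{\lambda}\xrightarrow[\lambda\to\rho]{}0,
\end{align*}
which is exactly the claim in this regime, with $C$ depending only on $d,\mu$.

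For $\rho=0$, we must show $\|\widehat{A^\lambda}-\widehat{A^0}\|_\infty\to0$ as $\lambda\to0^+$. If one is willing to invoke \eqref{lemconAre3}, the result is immediate when $\|\na_y^2 A\|_\infty<\infty$; but since that is not assumed here, the honest route is to go back to the PDE definitions: one shows directly from \eqref{chi} and \eqref{chi0} that $\na_y\chi^\lambda(x,t,\cdot,\cdot)$ converges (say in $L^2(\mathbb{T}^{d+1})$, after the change of variables $s\mapsto s/\lambda$) to $\na_y\chi^0(x,t,\cdot)$ as $\lambda\to0$, uniformly in $(x,t)$ — this is the standard singular-perturbation limit in which the $\pa_s$ term carries the small parameter $\lambda^{-1}$ in front of a bounded forcing, forcing the solution toward its $s$-average, which solves \eqref{chi0}; uniformity in $(x,t)$ follows because the energy estimates depend only on $\mu$. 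Plugging this into the defining averages \eqref{alamhat} and \eqref{A0hat} gives $\widehat{A^\lambda}\to\widehat{A^0}$ uniformly in $(x,t)$. The case $\rho=\infty$ is symmetric: one shows from \eqref{chi} (after scaling) that $\na_y\chi^\lambda\to\na_y\chi^\infty$ in $L^2(\mathbb{T}^{d+1})$ uniformly in $(x,t)$ as $\lambda\to\infty$ — now the $\pa_s$ term is multiplied by $\lambda^{-1}\to0$, so in the limit the $s$-dependence of the forcing is simply averaged out and one recovers the stationary cell problem \eqref{chiinfty} — and then \eqref{Ainftyhat} yields $\widehat{A^\lambda}\to\widehat{A^\infty}$. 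In all three cases the convergence is in $L^\infty(\R^{d+1})$ as required; the main obstacle, as noted, is to carry out the $\rho=0$ and $\rho=\infty$ limits qualitatively without the quantitative corrector bounds, relying only on the $\mu$-dependent energy estimates to get uniformity in $(x,t)$.
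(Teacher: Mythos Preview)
Your treatment of the case $0<\rho<\infty$ via \eqref{lemconAre2} is exactly what the paper does. The divergence is in the two endpoint cases, and there your sketch has both a confused heuristic and a real gap.

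First the heuristic. After the change of variables $s\mapsto \lambda s$ the rescaled corrector $\tilde\chi^\lambda(y,s)=\chi^\lambda(y,\lambda s)$ solves
\[
\lambda^{-1}\partial_s\tilde\chi^\lambda-\text{div}_y(A\nabla_y\tilde\chi^\lambda)=\text{div}_y(Ae_j)\quad\text{in }\mathbb{T}^{d+1}.
\]
For $\rho=\infty$ the coefficient $\lambda^{-1}$ tends to $0$, so the limiting problem is the \emph{elliptic} cell problem \eqref{chiinfty} with $s$ frozen: the $s$-dependence of the coefficients is \emph{not} averaged out, and $\chi^\infty$ genuinely depends on $s$. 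Your description (``the $s$-dependence of the forcing is simply averaged out'') applies to the opposite regime. For $\rho=0$ the factor $\lambda^{-1}$ is \emph{large}, not ``small'' as you wrote; it is this largeness that forces $\tilde\chi^\lambda$ to become $s$-independent and leads to \eqref{chi0}.

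More seriously, the step ``uniformity in $(x,t)$ follows because the energy estimates depend only on $\mu$'' is not an argument. Uniform \emph{bounds} on $\nabla_y\tilde\chi^\lambda$ in $L^2(\mathbb{T}^{d+1})$ do not give a \emph{uniform rate} of convergence as $\lambda\to 0$ or $\lambda\to\infty$; a soft compactness argument yields convergence for each fixed $(x,t)$, but the rate may depend on the profile $A(x,t,\cdot,\cdot)$ and hence vary with $(x,t)$. To promote this to $L^\infty_{x,t}$ convergence you need either some equicontinuity/compactness in $(x,t)$ (not assumed here) or a quantitative rate. The paper takes the second route: it mollifies $A$ in $(y,s)$ to produce an approximant $A_\eta$ with $\|\partial_s A_\eta\|_\infty<\infty$ (resp.\ $\|\nabla_y^2 A_\eta\|_\infty<\infty$), applies \eqref{lemconAre1} (resp.\ \eqref{lemconAre3}) to $A_\eta$, and controls $\|\widehat{A^\lambda}-\widehat{A_\eta^\lambda}\|_\infty$ and $\|\widehat{A}-\widehat{A_\eta}\|_\infty$ uniformly in $\lambda$ via the Meyers-type estimate \eqref{2.00}; this is exactly the smooth-approximation argument of Theorems~2.3 and~2.5 in \cite{gsamar2020}, to which the paper's proof appeals. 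That is the missing mechanism in your endpoint cases.
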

\begin{proof}
The result for the case $0< \rho <\infty$ is a direct consequence of \eqref{lemconAre2}, while the results for the cases $\rho=0$ and $\rho=\infty$ can be proved by smooth approximation as Theorems 2.3 and 2.5 in \cite{gsamar2020}.
\end{proof}

The following theorem provides the homogenized operator of $\pa_t+\mathfrak{L}_\va$ in \eqref{eq1}.
\begin{theorem}  Suppose \eqref{ratio} holds and $A=A(x,t,y,s) \in C(\R^{d+1}; L^\infty (\R^{d+1}))$  satisfies \eqref{cod1} and \eqref{cod2}.
Let $\Omega\subseteq \R^d$ be a bounded Lipschitz domain, and $u_\va$  a weak solution to $\pa_t u_\va +\mathfrak{L}_\va u_\va=F$ in $\Omega\times (T_1,T_2),$ where $F\in L^2(T_1,T_2; W^{-1,2}(\Omega)).$  Then as $\va$ tends to zero,   $u_\va$ converges to $u_0$ weakly in $L^2(T_1,T_2; W^{1,2}(\Omega))$ and strongly in $L^2(\Omega\times (T_1,T_2))$, where $u_0$ is the solution  to
\begin{align*}
\pa_tu_0-\text{div} (\widehat{A}(x,t) \na u_0)=F  \quad\text{in }\Omega\times (T_1,T_2),
\end{align*} with $\widehat{A}$ being given by \eqref{ahat}.
\end{theorem}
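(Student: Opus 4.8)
The plan is to combine the scale-reduction device of Section~2 with the classical energy (oscillating test function) method of parabolic homogenization; the point is that the $\lambda$-uniform corrector estimates let one pass to the limit even though the relevant period moves with $\va$. Observe first that $\pa_t+\mathfrak{L}_\va=\pa_t+\mathcal{L}^{\lambda_\va}_\va$ with $\lambda_\va:=\kappa^2\va^{-2}$; by \eqref{ratio} the numbers $\lambda_\va$ converge to a limit $\lambda_*\in[0,\infty]$, and Lemma~\ref{ahattoa} then gives $\|\widehat{A^{\lambda_\va}}-\widehat A\|_{L^\infty(\R^{d+1})}\to0$, where $\widehat A$ is as in \eqref{ahat}. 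Hence it suffices to produce the asserted convergences $u_\va\rightharpoonup u_0$ (weakly in $L^2(T_1,T_2;W^{1,2}(\Omega))$, strongly in $L^2(\Omega\times(T_1,T_2))$) and to identify the weak-$L^2$ limit $\xi_0$ of the flux $A(x,t,x/\va,t/\kappa^2)\na u_\va$ as $\widehat A\,\na u_0$.

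First I would record the a priori estimates. Testing the equation against $u_\va$ localized by cutoffs and using the ellipticity in \eqref{cod1} bounds $u_\va$ in $L^2(T_1,T_2;W^{1,2}(\Omega))$ uniformly in $\va$, and then, through the equation, $\pa_t u_\va$ in $L^2(T_1,T_2;W^{-1,2}(\Omega))$. Weak compactness together with the Aubin--Lions--Simon lemma gives a subsequence with $u_\va\rightharpoonup u_0$ weakly in $L^2(T_1,T_2;W^{1,2}(\Omega))$ and strongly in $L^2(\Omega\times(T_1,T_2))$, and $A(\cdot,\cdot,\cdot/\va,\cdot/\kappa^2)\na u_\va\rightharpoonup\xi_0$ weakly in $L^2$; passing to the limit in the weak formulation yields $\pa_t u_0-\operatorname{div}\xi_0=F$, so everything reduces to the identity $\xi_0=\widehat A\,\na u_0$.

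For the identification I would run Tartar's oscillating test function argument for $\pa_t+\mathcal{L}^{\lambda_\va}_\va$. Because no smoothness of $A$ in $(y,s)$ is assumed and $A$ is only continuous in $(x,t)$, I first mollify $A$ in $(x,t)$ to obtain $A^\delta$ smooth in $(x,t)$ (retaining \eqref{cod1}, \eqref{cod2}); energy estimates bound $u_\va-u_\va^\delta$ and $\xi_0-\xi_0^\delta$ by a modulus of $\|A-A^\delta\|_{L^\infty}$ uniformly in $\va$, and the corresponding homogenized matrices converge as $\delta\to0$ by continuous dependence of the limiting corrector problems on the coefficient together with Lemma~\ref{ahattoa}; hence it is enough to treat $A$ smooth in $(x,t)$ and then let $\delta\to0$. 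For such $A$, let $\chi^{\lambda}$ and the adjoint corrector $\chi^{*,\lambda}$ be as in \eqref{chi} (the latter for $(A^\lambda)^*$), and let $\phi^{\lambda}$ be the associated skew-symmetric flux correctors; all obey the bounds \eqref{lecore0}, \eqref{2.00}, \eqref{lecore1}, \eqref{lecore2} with constants independent of $\lambda$. For $\psi\in C_c^\infty(\Omega\times(T_1,T_2))$ and a direction $e_k$, set $P^\va_k:=x_k+\va\,\chi^{*,\lambda_\va}_k(x,t,x/\va,t/\va^2)$, test the equation for $u_\va$ against $\psi P^\va_k$ and the (approximate) equation for $P^\va_k$ against $\psi u_\va$, and subtract: a classical computation shows $P^\va_k$ is caloric for $\pa_t+\mathcal{L}^{*,\lambda_\va}_\va$ up to terms carrying $\va\na_x\chi^{\lambda_\va}$ and $\va\pa_t\chi^{\lambda_\va}$, which are $o(1)$ by the uniform corrector bounds, and after using the flux-corrector identity the surviving products have the parabolic div--curl structure, so compensated compactness passes to the limit and gives $\xi_0\cdot e_k=\widehat{A^{\lambda_*}}\na u_0\cdot e_k$. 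Since $\widehat{A^{\lambda_*}}=\widehat A$, this is $\widehat A\,\na u_0\cdot e_k$; uniqueness for the limit parabolic equation (with the initial trace inherited in the limit) then promotes the subsequential convergence to convergence of the whole family.

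The main obstacle is that the period $\lambda_\va=\kappa^2\va^{-2}$ varies with $\va$, so the fixed-$\lambda$ homogenization theorem of \cite{lions1978} cannot be quoted directly, and the usual corrector test functions produce slow-variable error terms not controlled by the bare hypotheses. Both are handled by (i) the $\lambda$-uniform corrector and flux-corrector estimates of Section~2, which keep all error terms $o(1)$ along the sequence $\lambda_\va\to\lambda_*$, and (ii) the mollification-in-$(x,t)$ reduction; the three regimes $\rho=0$, $0<\rho<\infty$, $\rho=\infty$ enter only through which limiting corrector ($\chi^0$, $\chi^{\lambda_*}$, $\chi^\infty$) appears, as quantified by Lemmas~\ref{lemconA} and \ref{ahattoa}.
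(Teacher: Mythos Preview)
The paper does not carry out a proof of this theorem: it cites \cite{persson2012} for a multi-scale convergence argument and remarks that, based on Lemma~\ref{ahattoa}, Tartar's test function method (or alternatively the quantitative estimate \eqref{cothmre} together with approximation) yields the result. Your proposal is exactly the Tartar route the paper alludes to---scale reduction to $\pa_t+\mathcal{L}^{\lambda_\va}_\va$ with $\lambda_\va=\kappa^2/\va^2$, $\lambda$-uniform corrector bounds from Section~2 for the oscillating test functions, flux identification via compensated compactness, and Lemma~\ref{ahattoa} to replace $\widehat{A^{\lambda_\va}}$ by $\widehat A$---so the approaches coincide.
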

This theorem has been proved in \cite{persson2012} by using the multi-scale convergence method. Based on Lemma \ref{ahattoa}, one can also prove the theorem by using Tartar's test function method. Yet we shall not provide the details, as our analysis does not rely on the qualitative result above, and moreover by proper approximation our quantitative estimate \eqref{cothmre} provides an alternative proof for it.

\subsection{Flux correctors}
For $1\leq \bar{i}\leq d+1$, define
\begin{align} \label{defb}
B^\lm_{\bar{i}j} (x,t,y,s) =\left\{
  \begin{array}{ll}
   A^\lm_{\bar{i}j}  +  A^\lm_{\bar{i}k}   \pa_{y_k}  \chi^\lm_{j}   -\widehat{A^\lm_{\bar{i}j}} , &\textrm{ if } 1 \leq \bar{i}\leq d,\\
   -\chi^\lm_{j} , &\textrm{ if } \bar{i}=d+1.
  \end{array}
\right.
\end{align}
In view of \eqref{chi} and \eqref{lecore0}, we have
\begin{align}
& \pa_{y_i}B^\lm_{ij} (x,t,y,s)+\pa_s  B^\lm_{(d+1)j} (x,t,y,s)=0.\label{divb=0}
\end{align}

\begin{lemma}\label{le-fcon}
Let $1\leq \bar{i}, \bar{j}\leq d+1$ and $1\leq i,j,k \leq d$. There exist functions $\mathfrak{B}^\lm_{\bar{i}\bar{j} k} (x,t,y, s)$ in $\R^{d+1} \times \R^{d+1}$, which are  $(1,\lm)$-periodic in $(y,s),$ such that
\begin{align}
&\mathfrak{B}^\lm_{\bar{i}\bar{j} k}  =- \mathfrak{B}^\lm_{\bar{j}\bar{i}k} ,\quad \pa_{y_{i}} \mathfrak{B}^\lm_{i\bar{j} k} (x,t,y,s)+ \pa_s \mathfrak{B}^\lm_{(d+1) \bar{j} k} (x,t,y,s)= B^\lm_{\bar{j} k} (x,t,y,s),\label{huab1}\\
& \fint_0^\lm\!\!\!\int_{\mathbb{T}^{d}} \big(|\mathfrak{B}^\lm_{ij k}(x,t,y,s) |^2 +|\na_y \mathfrak{B}^\lm_{i(d+1)k} (x,t,y,s) |^2\big) dyds \leq C,\label{huab2}\\
 &\fint_0^\lm\!\!\!\int_{\mathbb{T}^{d}}  |  \mathfrak{B}^\lm_{i(d+1)k} (x,t,y,s) |^2 dyds \leq  C (1+\lm)^2.\label{huab3}
\end{align}
Moreover, if $
\|\mathcal{D}  A^\lm\|_{\infty}=\|\mathcal{D} A^\lm\|_{L^\infty(\R^{2d+2})} < \infty,
$ for  $\mathcal{D}=\na_x$ or $\pa_t$.  Then
\begin{align} \label{eshuab}
\begin{split}
&\fint_0^\lm\!\!\!\int_{\mathbb{T}^{d}} \big(|\mathcal{D} \na_y\mathfrak{B}^\lm_{i(d+1) k}(x,t,y,s) |^2+|\mathcal{D}\mathfrak{B}^\lm_{ij k}(x,t,y,s) |^2\big) dyds \leq C \| \mathcal{D} A^\lm\|^2_{\infty},  \\
&\fint_0^\lm\!\!\!\int_{\mathbb{T}^{d}}  |  \mathcal{D}\mathfrak{B}^\lm_{i(d+1)k} (x,t,y,s) |^2 dyds \leq  C (1+\lm)^2 \|\mathcal{D}A^\lm\|^2_{\infty}
\end{split}
\end{align} for any $(x,t)\in \R^{d+1}$, where  $C$ depends only on $d,\mu.$
\end{lemma}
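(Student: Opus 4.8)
The plan is to construct the flux correctors $\mathfrak{B}^\lm_{\bar i\bar j k}$ by solving, for each fixed $(x,t)$ and each $k$, a Poisson-type equation on the parabolic torus. Specifically, since $B^\lm_{\bar j k}$ is $(1,\lm)$-periodic in $(y,s)$ and, by \eqref{divb=0} together with \eqref{huab1}'s first identity, the vector field $(B^\lm_{1k},\dots,B^\lm_{dk},B^\lm_{(d+1)k})$ is divergence-free in the $(y,s)$ variables over $\mathbb{T}^d\times(0,\lm)$ with mean zero (the mean of the first $d$ components vanishes by the definition \eqref{alamhat} of $\widehat{A^\lm}$, and the mean of $-\chi^\lm_k$ vanishes by the normalization in \eqref{chi}), I would first introduce $\phi^\lm_{\bar j k}$ solving a scalar elliptic problem $\Delta_{y,\lm}\,\phi^\lm_{\bar j k}=B^\lm_{\bar j k}$ on $\mathbb{T}^d\times(0,\lm)$, where $\Delta_{y,\lm}$ denotes the Laplacian adapted to the $(1,\lm)$-periodicity (i.e.\ $\Delta_y+\lm^{-2}\partial_s^2$ after rescaling $s$, or simply the standard Laplacian on $\mathbb{T}^{d+1}$ after the change of variable $s\mapsto s/\lm$). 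Then I would set, following the standard elliptic construction (cf.\ \cite{shennote2018}),
\[
\mathfrak{B}^\lm_{\bar i\bar j k}=\partial_{y_{\bar i}}\phi^\lm_{\bar j k}-\partial_{y_{\bar j}}\phi^\lm_{\bar i k}
\]
with the convention $\partial_{y_{d+1}}=\lm^{-1}\partial_s$ (or $\partial_s$ in rescaled variables). The antisymmetry $\mathfrak{B}^\lm_{\bar i\bar j k}=-\mathfrak{B}^\lm_{\bar j\bar i k}$ is then immediate, and the divergence identity in \eqref{huab1} follows by expanding $\partial_{y_i}\mathfrak{B}^\lm_{i\bar j k}+\partial_s\mathfrak{B}^\lm_{(d+1)\bar j k}$: the cross terms telescope and one is left with $(\Delta_{y,\lm}\phi^\lm_{\bar j k})-\partial_{y_{\bar j}}(\mathrm{div}\,\text{-type term})$, and the latter vanishes precisely because $(B^\lm_{\cdot k})$ is divergence-free in $(y,s)$.

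Next I would establish the $L^2$ bounds. Energy estimates for the Poisson problem on the (rescaled) torus give $\|\na_{y,\lm}\phi^\lm_{\bar j k}\|_{L^2}\le C\|B^\lm_{\bar j k}\|_{L^2}$, and by Lemma \ref{lemma-2.2} and \eqref{lecore0} the components $B^\lm_{ik}$ for $1\le i\le d$ have uniformly bounded $L^2$ (indeed $L^q$) norms, while $B^\lm_{(d+1)k}=-\chi^\lm_k$ has $L^2$ norm bounded by $C$ as well. This yields \eqref{huab2} for $\mathfrak{B}^\lm_{ijk}$ with $1\le i,j\le d$ directly. For $\mathfrak{B}^\lm_{i(d+1)k}$, the term $\na_y\mathfrak{B}^\lm_{i(d+1)k}$ involves $\na_y\partial_s\phi^\lm_{(d+1)k}$-type quantities; one estimates these by differentiating the equation or by Meyers/Caccioppoli on the torus, again reduced to the $L^2$ norm of $B^\lm$. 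The factor $(1+\lm)^2$ in \eqref{huab3} for $\|\mathfrak{B}^\lm_{i(d+1)k}\|_{L^2}$ (without a $y$-derivative) arises because recovering $\phi^\lm_{(d+1)k}$ itself from its gradient on $\mathbb{T}^d\times(0,\lm)$ costs a Poincaré constant of order $1+\lm$ in the $s$-direction (the domain has length $\lm$ in $s$), and one more such factor may enter from the structure of the $(d+1)$-component; keeping careful track of the $s$-Poincaré constant on an interval of length $\lm$ is the source of the $(1+\lm)^2$.

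For the last assertion \eqref{eshuab}, I would differentiate the defining Poisson equation for $\phi^\lm_{\bar j k}$ in $x$ or $t$ (denoted $\mathcal{D}$). Since $\mathcal{D}$ commutes with the $(y,s)$-operators, $\mathcal{D}\phi^\lm_{\bar j k}$ solves the same type of equation with right-hand side $\mathcal{D}B^\lm_{\bar j k}$, and one needs $\|\mathcal{D}B^\lm_{\bar j k}\|_{L^2}\le C\|\mathcal{D}A^\lm\|_\infty$. This in turn requires controlling $\mathcal{D}\na_y\chi^\lm$ and $\mathcal{D}\chi^\lm$ in $L^2$ by $\|\mathcal{D}A^\lm\|_\infty$: differentiate the corrector equation \eqref{chi} in $\mathcal{D}$ to get a parabolic-type equation for $\mathcal{D}\chi^\lm_j$ with right-hand side $\mathrm{div}_y(\mathcal{D}A^\lm(\na_y\chi^\lm_j+e_j))$, and apply the energy estimate together with \eqref{lecore0}. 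Plugging these into the differentiated flux-corrector equation and repeating the energy/Poincaré argument above yields \eqref{eshuab}, with the same $(1+\lm)^2$ loss in the component without a $y$-derivative. The main obstacle I anticipate is bookkeeping the $\lm$-dependence correctly throughout: one must consistently work on the torus $\mathbb{T}^d\times(0,\lm)$ (equivalently, rescale $s\mapsto s/\lm$ and then all estimates are $\lm$-free but the right-hand side picks up $\lm$-factors through $\partial_s\mapsto\lm^{-1}\partial_s$), and verify that the only place a genuine $\lm$-growth appears is the $s$-direction Poincaré inequality responsible for \eqref{huab3} and the second line of \eqref{eshuab}; getting this exponent to be exactly $2$ rather than something larger is the delicate point, and it hinges on using the $y$-gradient bound (which is $\lm$-uniform) plus a single $s$-Poincaré step.
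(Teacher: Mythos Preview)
Your overall route---solve a Poisson equation on the torus for scalar potentials and take antisymmetric first-derivative combinations---is exactly the paper's. The paper takes $(\Delta_y+\partial_s^2)f^\lm_{\bar i k}=B^\lm_{\bar i k}$ on $\mathbb{T}^d\times(0,\lm)$ with the \emph{unscaled} $\partial_s$, and sets $\mathfrak{B}^\lm_{ijk}=\partial_{y_i}f^\lm_{jk}-\partial_{y_j}f^\lm_{ik}$, $\mathfrak{B}^\lm_{(d+1)jk}=\partial_s f^\lm_{jk}-\partial_{y_j}f^\lm_{(d+1)k}$. Your plan for \eqref{eshuab}---differentiate the potential equation in $\mathcal{D}\in\{\na_x,\pa_t\}$, bound $\mathcal{D}B^\lm$ via the differentiated corrector equation, and repeat---also matches the paper.

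There is, however, a genuine inconsistency in your scaling conventions that prevents the argument from closing. You take $\Delta_{y,\lm}=\Delta_y+\lm^{-2}\partial_s^2$ and $\partial_{y_{d+1}}=\lm^{-1}\partial_s$, but the identity in \eqref{huab1} uses the \emph{unscaled} $\partial_s$: expanding $\partial_{y_i}\mathfrak{B}^\lm_{ijk}+\partial_s\mathfrak{B}^\lm_{(d+1)jk}$ with your definitions produces $(\Delta_y+\lm^{-1}\partial_s^2)\phi^\lm_{jk}$ minus a divergence-free term, which is neither $\Delta_{y,\lm}\phi^\lm_{jk}$ nor $B^\lm_{jk}$. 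The paper's unscaled choice is what makes \eqref{huab1} come out correctly. Second, the paper does not use energy estimates or Poincar\'e for the $L^2$ bounds but Fourier series and Parseval: writing $f^\lm_{\bar i k}$ in the basis $e^{-2\pi i(n\cdot y+ms/\lm)}$, the multipliers for $\nabla_y f$, $\nabla_y^2 f$, $\nabla_y\partial_s f$, $\partial_s^2 f$ are all bounded by $1$, which gives \eqref{huab2} directly, while the multiplier for $\partial_s f^\lm_{ij}$ is $\tfrac{|m|\lm^{-1}}{|n|^2+m^2\lm^{-2}}\le \lm$ for $m\ne 0$, giving $\fint|\partial_s f^\lm_{ij}|^2\le C\lm^2$ and hence \eqref{huab3}. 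So the factor $(1+\lm)^2$ comes from a single Fourier-multiplier bound for $\partial_s$ on the long torus, not from a Poincar\'e inequality on $\phi^\lm_{(d+1)k}$; no ``second factor'' enters, and neither Meyers nor Caccioppoli is needed. A pure energy argument on $\mathbb{T}^d\times(0,\lm)$ (testing against $f$) would incur a Poincar\'e constant $\sim(1+\lm)$ already in $\|\nabla_y f\|$, which would spoil \eqref{huab2}.
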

\begin{proof}
The construction of functions $\mathfrak{B}^\lm_{\bar{i}\bar{j} k}$ is completely the same as the case $A^\lm=A^\lm(y,s)$ in \cite{gsjfa2017}. The key idea is to solve
\begin{align*}
\begin{split}
&(\Delta_{y}+\pa_s^2) f^\lm_{\bar{i}k}(x,t,y,s)=B^\lm_{\bar{i}k}(x,t,y,s)  \quad \text{in}\quad \mathbb{T }^{d}\times (0,\lm) ,\\
&f^\lm_{\bar{i}k} (x,t,y,s) \,\,\text{ is  } (1,\lm)\text{-periodic in }\,  (y, s)
\end{split}
\end{align*}
for fixed $(x,t)\in \R^{d+1}$, and to set for  $ 1\leq i,j,k\leq d,$
\begin{equation}\label{plefcon-1}
 \begin{cases}
\mathfrak{B}^\lm_{ijk} =\pa_{y_i} f^\lm_{jk}-\pa_{y_j} f^\lm_{ik},     \\
\mathfrak{B}^\lm_{(d+1)jk} =\pa_{s} f^\lm_{jk}-\pa_{y_j} f^\lm_{(d+1) k},  \text{ and } \mathfrak{B}^\lm_{(d+1)jk} =- \mathfrak{B}^\lm_{j(d+1)k}.
\end{cases}
 \end{equation}
It is not difficult to verify that $\mathfrak{B}^\lm_{\bar{i}\bar{j} k}$ satisfy \eqref{huab1}.  To prove \eqref{huab2} and \eqref{huab3}, we use the Fourier series to write for fixed $i,j$,
\begin{align*}
B^\lm_{ij}(x,t,y,s)=\sum_{\substack{ n\in \mathbb{Z}^d, m\in \mathbb{Z}\\ (n, m)\neq (0, 0) }} b_{n,m}(x,t) e^{-2\pi  \textbf{i}  n\cdot y -2\pi \textbf{i} m s\lambda^{-1}}.
\end{align*}
Then
$$
f_{ij}^\lambda (x,t,y, s)=-\frac{1}{4\pi^2}
\sum_{\substack{ n\in \mathbb{Z}^d, m\in \mathbb{Z}\\ (n, m)\neq (0, 0) }}
\frac{ b_{n, m}(x,t)}{ |n|^2 +|m|^2 \lambda^{-2}}
 e^{-2\pi \textbf{i} n\cdot y -2\pi \textbf{i} m s\lambda^{-1}}.
$$
By Parseval's Theorem, it is not difficult to see that
\begin{align}\label{plefcon-2}
  \fint_0^\lambda \!\!\!\!\int_{\mathbb{T}^d}
 |\partial_s f_{ij}^\lambda(x,t,y,s)|^2dyds
 \le C \lambda^2, \end{align}
and \begin{align}
\begin{split}\label{plefcon-3}
 \fint_0^\lambda\! \!\!\!\int_{\mathbb{T}^d} &
\Big(  |\nabla_y f_{ij}^\lambda|^2
+ |\nabla_y^2 f_{ij}^\lambda|^2
+ |\partial_s^2 f_{ij}^\lambda|^2
+|\nabla_y \partial_s f_{ij}^{\lambda}|^2 \Big)dyds  \\
 & \le C \sum_{n, m}  |b_{n, m}|^2
  = C \fint_0^\lambda \!\!\!\!\int_{\mathbb{T}^d}
 |B_{ij}^\lambda(x,t,y,s)|^2dyds
   \le C\\
   \end{split}
 \end{align} for any $(x,t)\in \R^{d+1}$,
 where $C$ depends only on $d$ and $\mu$. Likewise, by \eqref{lecore0} we obtain that
 \begin{align}\label{plefcon-4}
 \fint_0^\lambda \!\!\!\int_{\mathbb{T}^d}
\Big(  |\nabla_y f_{(d+1)j}^\lambda|^2
+ |\nabla_y^2 f_{(d+1)j}^\lambda|^2
+ |\partial_s^2 f_{(d+1)j}^\lambda|^2
+|\nabla_y \partial_s f_{(d+1)j}^{\lambda}|^2 \Big)dyds
\le C
 \end{align} for any $(x,t)\in \R^{d+1}$.
In view of \eqref{plefcon-1}, one derives \eqref{huab2} and \eqref{huab3} from \eqref{plefcon-2}-\eqref{plefcon-4} immediately.

We now turn to \eqref{eshuab}.
Note that if $\|\mathcal{D}  A^\lm\|_{\infty} < \infty $ for $ \mathcal{D}=\na_x$ or $\pa_t$,
 instead of \eqref{lecore1} and \eqref{lecore2}  we have
\begin{align}\label{lecore3}
& \fint_0^\lm\!\!\!\int_{\mathbb{T}^{d}} \big(|\mathcal{D}\na_y\chi^\lm(x,t,y,s)|^2+ |\mathcal{D}\chi^\lm(x,t,y,s)|^2\big)dyds \leq C \|\mathcal{D} A^\lm\|^2_{\infty},
\end{align} where $C$ depends only on $d,\mu$. By the definition of $\widehat{A^\lm}$, we have
\begin{align}\label{lecore3'}
 \|\mathcal{D}\widehat{A^\lm}\|_{\infty} \leq C \|\mathcal{D} A^\lm\|_{\infty},
\end{align}
which, by the definition of $B^\lm$ in \eqref{defb}, implies that
\begin{align*}
\begin{split}
 \fint_0^\lm\!\!\!\int_{\mathbb{T}^{d}} |\mathcal{D} B^\lm_{\bar{i}j}(x,t,y,s) |^2 dyds \leq C \|\mathcal{D} A^\lm\|^2_{\infty}.
\end{split}
\end{align*} Note that $$ \fint_0^\lm\!\!\!\int_{\mathbb{T}^{d}} |B^\lm_{\bar{i}j}(x,t,y,s) |^2 dyds \leq C.$$
By using Parseval's theorem, it is not difficult to derive \eqref{eshuab} in the same manner as \eqref{huab2} and \eqref{huab3}.
\end{proof}

\subsection{An $\va$-smoothing operator}
 Let $ \varphi_1(t)\in C_c^\infty((-1 , 1 )), \,\varphi_2(x)\in C_c^\infty(B(0,1)) $ be fixed nonnegative functions such that $$\int_{\R} \varphi_1(t) dt=1  \quad\text{and}\quad  \int_{\mathbb{R}^{d}} \varphi_2(x)dx =1.$$
Set   $\varphi_{1,\varepsilon} (t)=\frac{1}{\varepsilon^{2}} \varphi_1(t/\va^{2}), \,\varphi_{2,\varepsilon}(x)=\frac{1}{\varepsilon^{d}} \varphi_2(x/\va).$   For functions of form  $f^\va(x,t)=f(x,t,x/\va,t/\va^2)$, we define
\begin{align}\label{sva}
\begin{split}
&\widetilde{S}_\varepsilon(f^\va)(x,t)=\int_{\mathbb{R}^{d}}   f(z,t, x/\va, t/\va^2)\varphi_{2,\va} (x-z)dz,\\
&S_\varepsilon(f^\va)(x,t)=\int_{\mathbb{R}^{d+1}} f( z,  \tau, x/\va, t/\va^2)\varphi_{1,\va} (t-\tau) \varphi_{2,\va} (x-z) dz d\tau.
\end{split}
\end{align}
By the definition, it is obvious that
\begin{align*}
S_\va(f^\va)(x,t)=\int_{\R} \wt{S}_\va(f^\va)(x,\tau ,x/\va,t/\va^2)\varphi_{1,\va}(t-\tau)  d\tau.
\end{align*}

\begin{lemma}\label{le2.0}
Let $g=g(x,t,y,s) \in L^\infty(\R^{d+1}; L^p_{loc}(\R^{d+1}))$ and $f \in L^p(\R^{d+1})$. Then for $\ell=0$ or $1$, and any $1\leq p<\infty$, we have for $\de,\va>0$,
\begin{align}
&\|S_\de(g^\va   \na ^\ell f)\|_{L^p(\R^{d+1})}\nonumber\\
 & \leq C  \|  f\|_{L^p(\R^{d+1})} \Big\{\de^{-\ell}\sup_{ (x,t)\in \R^{d+1}} \sup_{ (y',s')\in \R^{d+1}}\Big(\fint_{\mathcal{Q}_{\de/\va}(y',s')}|g(x,t,y,s)  |^pdyds\Big)^{1/p}\label{le20-re0}\\
 & \quad + \sup_{ (x,t)\in \R^{d+1}}  \sup_{ (y',s')\in \R^{d+1}}\Big(\fint_{\mathcal{Q}_{\de/\va}(y',s')}|\na^{\ell}_x g(x,t,y,s) |^p dyds\Big)^{1/p}\Big\} ,\nonumber\\
&\| \na (S_\de(g^\va    f))\|_{L^p(\R^{d+1})}\nonumber\\
& \leq C \|  f\|_{L^p(\R^{d+1})}  \Big\{\de^{-1} \sup_{ (x,t)\in \R^{d+1}} \sup_{ (y',s')\in \R^{d+1}}   \Big(\fint_{\mathcal{Q}_{\de/\va}(y',s')}|g(x,t,y,s)  |^pdyds\Big)^{1/p}\label{le20-re1}\\
 & \quad +  \va^{-1}\sup_{(x,t)\in \R^{d+1}} \sup_{ (y',s')\in \R^{d+1}}\Big(\fint_{\mathcal{Q}_{\de/\va}(y',s')}\!|\na_y g(x,t,y,s) |^p dyds\Big)^{1/p}\Big\},\nonumber
\end{align}
where $g^\va=g(x,t,x/\va,t/\va^2)$, $  \mathcal{Q}_{r}(x,t)=B(x,r) \times (t-r^2, t+r^2)$, and  $C$ depends only on $d$ and $p.$
\end{lemma}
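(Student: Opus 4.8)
The plan is to estimate the two norms in \eqref{le20-re0} and \eqref{le20-re1} by a direct Fubini-type argument, treating the convolution kernels $\varphi_{1,\de}$ and $\varphi_{2,\de}$ as a partition of unity over dyadic-type cells in the fast variables. First I would fix $\de,\va>0$ and observe that, since $\varphi_{1,\de}(t-\tau)\varphi_{2,\de}(x-z)$ is supported in $|t-\tau|<\de^2$, $|x-z|<\de$, the inner integral defining $S_\de$ only sees the values of $g(z,\tau,y,s)$ for $(z,\tau)$ in a parabolic $\de$-neighborhood of $(x,t)$ while $(y,s)=(x/\va,t/\va^2)$ is frozen. The key point is to bound $|S_\de(g^\va\na^\ell f)(x,t)|$ pointwise in $(x,t)$ by a convolution in the slow variables $(z,\tau)$ of $|\na^\ell f|$ against $\varphi_{1,\de}\varphi_{2,\de}$, with coefficient controlled by a local $L^p$ average of $g$ (and, after integrating by parts in $z$ to move one derivative off $f$ when $\ell=1$, a local average of $\na_x g$ with the extra $\de^{-1}$). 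Then I take $L^p$ norms in $(x,t)$, use Young's inequality for the convolution (so the kernels contribute only their $L^1$-norm $=1$), and finally perform the change of variables $(x,t)\mapsto (\va y,\va^2 s)$ in the frozen fast slot to convert the average of $g(z,\tau,\cdot,\cdot)$ over the physical $\de$-ball around $x$ into an average of $g$ over the cell $\mathcal{Q}_{\de/\va}$ in the $(y,s)$-variable. That last step is exactly where the cells $\mathcal{Q}_{\de/\va}(y',s')$ and the supremum over $(y',s')$ appear.

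More concretely, for \eqref{le20-re0} with $\ell=0$: write $S_\de(g^\va f)(x,t)=\int g(z,\tau,x/\va,t/\va^2)f(z,\tau)\,\varphi_{1,\de}(t-\tau)\varphi_{2,\de}(x-z)\,dzd\tau$, apply H\"older in $(z,\tau)$ with exponents $p$ and $p'$ against the probability measure $\varphi_{1,\de}(t-\tau)\varphi_{2,\de}(x-z)\,dzd\tau$, getting
\[
|S_\de(g^\va f)(x,t)|^p \le \Big(\sup_{(z,\tau)}|g(z,\tau,x/\va,t/\va^2)|^p\Big)\int |f(z,\tau)|^p\varphi_{1,\de}(t-\tau)\varphi_{2,\de}(x-z)\,dzd\tau.
\]
Then I would integrate in $(x,t)$, use Fubini and $\int\varphi_{1,\de}\varphi_{2,\de}=1$ on the $f$-factor, and for the $g$-factor bound the $\sup$ over $(z,\tau)$ crudely by $\|g\|_{L^\infty_{(x,t)}L^p_{(y,s)}}$-type quantities after the rescaling $x=\va y$, $t=\va^2 s$; the emergence of the averaging radius $\de/\va$ is because a $\de$-ball in physical $x$ becomes a $(\de/\va)$-ball in $y$. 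For $\ell=1$ one integrates by parts in $z$: $\na_z[\varphi_{2,\de}(x-z)]=-\na_x[\varphi_{2,\de}(x-z)]$ produces the $\de^{-1}$ term (the derivative landing on the kernel, whose $L^1$-norm is $\de^{-1}$), while the derivative landing on $g$ (in its first slot) produces the $\na^\ell_x g$ term with no $\de^{-1}$; crucially $x/\va$ sits in the frozen slot so no $\va^{-1}$ appears here. For \eqref{le20-re1} the outer gradient $\na_{(x,t)}$ hits either the kernels — giving $\de^{-1}$ times the $g$-average — or the frozen fast slot $x/\va$ inside $g$, producing $\va^{-1}\na_y g$; differentiating $f$ is not allowed since $f$ is only $L^p$, so no integration by parts onto $f$ is needed here.

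The main obstacle, and the only genuinely delicate bookkeeping, is the interplay between the two scales $\de$ and $\va$ in the change of variables: one must be careful that the $L^\infty$-in-$(x,t)$, $L^p_{\mathrm{loc}}$-in-$(y,s)$ structure of $g$ is preserved and that the localization produced by $\varphi_{2,\de}$ in the \emph{slow} variable $z$, combined with $g$ being evaluated at the \emph{fast} point $x/\va$ which is \emph{not} localized, genuinely yields an average over a cell of radius $\de/\va$ — this works because we first bound by a sup over $(z,\tau)$ in the $\de$-neighborhood (which is innocuous by the $L^\infty$ assumption in $(x,t)$) and only afterwards integrate in $x$ to turn the frozen slot into the cell average. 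A secondary technical point is that $\varphi_{1,\de}$ lives on the parabolic time scale $\de^2$ so that $\na_t$ of the kernel costs $\de^{-2}$, but this is consistent with parabolic scaling and only the spatial gradient cost $\de^{-1}$ is recorded since $\na$ denotes the spatial gradient. Everything else is Young's inequality and Fubini, with $C$ depending only on $d$ and $p$ through the fixed profiles $\varphi_1,\varphi_2$ and the H\"older exponents.
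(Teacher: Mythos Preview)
Your overall architecture---H\"older, Fubini, then the change of variables $(x,t)\mapsto(\va y,\va^2 s)$ to produce the cell $\mathcal{Q}_{\de/\va}$---is the same as the paper's. But there is a genuine gap in the order of operations for the $\ell=0$ case (and it propagates to the others). When you apply H\"older with exponents $p,p'$ and pull out $\sup_{(z,\tau)}|g(z,\tau,x/\va,t/\va^2)|$, you replace $g$ by $G(y,s)=\sup_{(z,\tau)}|g(z,\tau,y,s)|$ \emph{before} integrating in $(x,t)$. After Fubini and rescaling you then obtain
\[
\sup_{(y',s')}\fint_{\mathcal{Q}_{\de/\va}(y',s')}\Big(\sup_{(z,\tau)}|g(z,\tau,y,s)|\Big)^p\,dyds,
\]
i.e.\ the supremum is \emph{inside} the cell average. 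The lemma, however, asserts the bound with the supremum \emph{outside}:
\[
\sup_{(x,t)}\sup_{(y',s')}\fint_{\mathcal{Q}_{\de/\va}(y',s')}|g(x,t,y,s)|^p\,dyds.
\]
Since $\fint\sup\ge\sup\fint$ in general, your quantity dominates the one in the statement, so you have proved a weaker inequality, not the lemma. Worse, under the sole hypothesis $g\in L^\infty(\R^{d+1};L^p_{\mathrm{loc}}(\R^{d+1}))$ the function $G(y,s)$ need not be in $L^p_{\mathrm{loc}}$ at all, so your right-hand side may be infinite while the lemma's is finite.

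The fix is not to separate $g$ and $f$ at the pointwise stage. The paper applies Jensen's inequality (it calls it H\"older) with respect to the probability measure $\varphi_{1,\de}\varphi_{2,\de}\,dzd\tau$ to obtain
\[
|S_\de(g^\va\na^\ell f)(x,t)|^p \le C\fint_{B(x,\de)}\fint_{I(t,\de^2)}|g(z,\tau,x/\va,t/\va^2)|^p|f(z,\tau)|^p\,dzd\tau
\]
(after the integration by parts in $z$ when $\ell=1$), keeping $|g|^p|f|^p$ together. Now integrate in $(x,t)$ and swap with $\int dzd\tau$ by Fubini: for \emph{fixed} $(z,\tau)$ the inner integral over $(x,t)$ is supported in $|x-z|<\de$, $|t-\tau|<\de^2$, and the substitution $x=\va y$, $t=\va^2 s$ turns it into exactly $\fint_{\mathcal{Q}_{\de/\va}(z/\va,\tau/\va^2)}|g(z,\tau,y,s)|^p\,dyds$ with $(z,\tau)$ still fixed. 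Taking the supremum over $(z,\tau)$ \emph{at this point} places it outside the cell average, as stated. The same remark applies verbatim to your treatment of \eqref{le20-re1}.
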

\begin{proof}  For \eqref{le20-re0}, we provide the proof for the case $\ell=1$, and omit the details for the simpler case $\ell=0$. Note that
\begin{align*}
\big|S_\de(g^\va   \na   f) \big|&=\Big|\int_{\R^{d+1}} g(z,\tau,x/\va,t/\va^2)   \na  f(z,\tau) \varphi_{2,\de}(x-z) \varphi_{1,\de}(t-\tau)  dzd\tau\Big|\\
&\leq \Big|\int_{\R^{d+1}}  \na_z g (z,\tau,x/\va,t/\va^2)   f(z,\tau) \varphi_{2,\de}(x-z) \varphi_{1,\de}(t-\tau)  dzd\tau\Big|\\
&\quad+\Big|\int_{\R^{d+1}} g(z,\tau,x/\va,t/\va^2) f(z,\tau) \na \varphi_{2,\de}(x-z) \varphi_{1,\de}(t-\tau)  dzd\tau\Big|.\end{align*}
By H\"{o}lder's inequality and Fubini's theorem,
\begin{align*}
&\|S_\de(g^\va  \na  f) \|^p_{L^p(\R^{d+1})}\\
&\leq C  \int_{\R^{d+1}}\fint_{B(x,\de)}\fint_{I(t,\de^2)}  |\na_z g(z,\tau,x/\va,t/\va^2)|^p   |f(z,\tau)|^p    dzd\tau dxdt\\
&\quad+ C\de^{-p} \int_{\R^{d+1}} \fint_{B(x,\de)}\fint_{I(t,\de^2)} |g(z,\tau,x/\va,t/\va^2)|^p |f(z,\tau)|^p  dzd\tau dxdt\\
&\leq C \sup_{(z,\tau)\in\R^{d+1}} \sup_{(y',s')\in\R^{d+1}}\fint_{\mathcal{Q}_{\de/\va}(y',s')} |\na_z g(z,\tau,y,s)|^p dyds  \int_{\R^{d+1}} |f(z,\tau)|^p   dzd\tau  \\
&\quad+ C \de^{-p} \sup_{(z,\tau)\in\R^{d+1}}\sup_{(y',s')\in\R^{d+1}} \fint_{\mathcal{Q}_{\de/\va}(y',s')}  |g(z,\tau,y,s)|^p dyds  \int_{\R^{d+1}} |f(z,\tau)|^p   dzd\tau,
\end{align*}
 where $I(t,\de^2)=(t-\de^2,t+\de^2).$ From the above inequality \eqref{le20-re0} follows directly.

It remain to prove \eqref{le20-re1}. Observe that
\begin{align*}
\big| \na  (S_\de(g^\va    f))\big|
&\leq \va^{-1}\Big|\int_{\R^{d+1}} \na_y g(z,\tau,x/\va,t/\va^2)   f(z,\tau) \varphi_{2,\de}(x-z) \varphi_{1,\de}(t-\tau)  dzd\tau\Big|\\
&\quad+\Big|\int_{\R^{d+1}} g(z,\tau,x/\va,t/\va^2) f(z,\tau) \na \varphi_{2,\de}(x-z) \varphi_{1,\de}(t-\tau)  dzd\tau\Big|.\end{align*}
The desired estimate then follows by H\"{o}lder's inequality and Fubini's theorem as above.
\end{proof}

\begin{lemma}\label{le2.2}
Let $S_\varepsilon$ be defined as in \eqref{sva}. Assume that  $g=g(x,t,y,s) \in L^\infty(\R^{d+1}\times \R^{d+1})$ and   $\na_x g(x,t,y,s), \pa_t g(x,t,y,s)\in L^\infty(\R^{d+1}\times\R^{d+1}).$
Then for any $f\in L^p(\R; W^{2,p}(\R^{d}))\cap W^{1,p}(\R; L^{p}(\R^{d})) $,
\begin{align}\label{le2.2re1}
\begin{split}
 &\|g^\varepsilon \na  f- S_\de(g^\va \na  f)\|_{L^p(\mathbb{R}^{d+1})}\\
&\leq C \|\na f\|_{L^p(\R^{d+1})}  \big\{\de^2  \|\pa_tg \|_{\infty}+ \de\|\na_x g \|_{\infty}\big\}\\
&\quad  + C  \de \big\{\de\|\na_x   g\|_{\infty}+   \| g\|_{\infty} \big\} \big\{\|\na^2  f\|_{L^p(\R^{d+1})} +\|\pa_t f\|_{L^p(\R^{d+1})} \big\},
\end{split}
\end{align}
where $C$ depends only on $d$ and $p$.
\end{lemma}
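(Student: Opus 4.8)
The plan is to estimate the difference $g^\va \na f - S_\de(g^\va \na f)$ by splitting it into two pieces that isolate, respectively, the oscillation of $g$ in the slow variables $(x,t)$ and the effect of mollifying $\na f$ itself. Writing out the convolution explicitly, one has
\begin{align*}
g^\va \na f(x,t) - S_\de(g^\va \na f)(x,t)
&= \int_{\R^{d+1}} \big[g(x,t,x/\va,t/\va^2) - g(z,\tau,x/\va,t/\va^2)\big]\na f(z,\tau)\,\vp_{2,\de}(x-z)\vp_{1,\de}(t-\tau)\,dzd\tau\\
&\quad + \int_{\R^{d+1}} g(z,\tau,x/\va,t/\va^2)\big[\na f(x,t) - \na f(z,\tau)\big]\vp_{2,\de}(x-z)\vp_{1,\de}(t-\tau)\,dzd\tau.
\end{align*}
For the first term, use the mean value inequality in the slow variables: since $(z,\tau)$ ranges over $B(x,\de)\times(t-\de^2,t+\de^2)$ on the support of the mollifier, $|g(x,t,\cdot,\cdot)-g(z,\tau,\cdot,\cdot)| \le \de\|\na_x g\|_\infty + \de^2\|\pa_t g\|_\infty$ pointwise; then take $L^p$ norms and apply Young's inequality for convolutions (the mollifiers have unit mass), which yields the bound $C\|\na f\|_{L^p}\{\de\|\na_x g\|_\infty + \de^2\|\pa_t g\|_\infty\}$, matching the first line on the right of \eqref{le2.2re1}.

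For the second term, I would bound $|g|$ by $\|g\|_\infty$ but keep one factor of the mollifier to exploit cancellation in $\na f(x,t)-\na f(z,\tau)$. The standard estimate for mollification of a $W^{1,p}$ function (applied to $\na f$, using the parabolic scaling of the mollifier so that spatial increments cost a factor $\de$ and temporal increments a factor $\de^2$) gives
$$\big\|g^\va \na f - S_\de(g^\va \cdot)\big\|_{L^p} \lesssim \|g\|_\infty \big\{\de\|\na^2 f\|_{L^p} + \de^2\|\pa_t \na f\|_{L^p}\big\},$$
but $\pa_t\na f$ is not controlled by the hypotheses, so instead I would write $\na f(x,t)-\na f(z,\tau) = [\na f(x,t)-\na f(z,t)] + [\na f(z,t)-\na f(z,\tau)]$, handle the first bracket by the spatial $W^{1,p}$ estimate (cost $\de\|\na^2 f\|_{L^p}$) and the second by integrating $\pa_t f$ in the $t$-variable after an integration by parts in $z$ against $\na\vp_{2,\de}$ — this is the device that trades the missing $\pa_t\na f$ for $\de^{-1}\cdot\de^2\|\pa_t f\|_{L^p} = \de\|\pa_t f\|_{L^p}$, at the price of an extra term $\de\|\na_x g\|_\infty$ hitting the mollified expression when the $\na_z$ derivative lands on $g$. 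Collecting everything produces the second line $C\de\{\de\|\na_x g\|_\infty + \|g\|_\infty\}\{\|\na^2 f\|_{L^p} + \|\pa_t f\|_{L^p}\}$.

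The main obstacle is precisely this last point: because the smoothing operator $S_\de$ convolves in both space and time but $\na f$ is only assumed to have one time derivative (not a mixed derivative $\pa_t\na f$), one cannot naively put the time-difference of $\na f$ under control. The integration-by-parts trick in the $z$ variable is what resolves it, and it is what forces the appearance of $\|\na_x g\|_\infty$ (with the favorable factor $\de^2$) in the estimate — this is why the statement carries a $\de\|\na_x g\|_\infty$ term in the second bracket as well as in the first. Everything else is a routine application of Young's inequality, Fubini's theorem, and Hölder's inequality with the parabolic scaling $\vp_{1,\de}(t) = \de^{-2}\vp_1(t/\de^2)$, $\vp_{2,\de}(x) = \de^{-d}\vp_2(x/\de)$ kept carefully in mind.
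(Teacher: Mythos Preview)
Your approach is correct and matches the paper's: both rest on the integration-by-parts in the dummy spatial variable $z$ to trade the unavailable $\pa_t\na f$ for $\pa_t f$, paying the extra $\de^2\|\na_x g\|_\infty$ term when $\na_z$ lands on $g$. The paper organizes the computation slightly differently---it inserts the space-only smoothing $\wt{S}_\de$ as an intermediate, writing $g^\va\na f - S_\de(g^\va\na f) = (g^\va\na f - \wt{S}_\de(g^\va\na f)) + (\wt{S}_\de(g^\va\na f) - S_\de(g^\va\na f))$ and handling the spatial piece via a Riesz-potential inequality rather than the direct mean-value/Young argument you sketch---but the substance is identical. One small slip: your displayed decomposition does not telescope as written; it should read $[g(x,t,\cdot)-g(z,\tau,\cdot)]\na f(x,t) + g(z,\tau,\cdot)[\na f(x,t)-\na f(z,\tau)]$ (your subsequent reasoning is consistent with this corrected form).
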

\begin{proof}
Observe that
\begin{align}\label{plle2.2-1}
\begin{split}
& \big|g^\varepsilon(x,t)  \na  f(x,t)- S_\de(g^\va \na  f)(x,t)\big| \\
&\leq \big|g^\varepsilon(x,t)  \na f(x,t)-\widetilde{S}_\de (g^\va \na f)(x,t)\big|+ \big|\widetilde{S}_\de(g^\va \na f)(x,t)-S_\de (g^\va \na f)(x,t)\big|\\
&\leq C \fint_{B(x,\de)} \big| g(x,t,x/\va,t/\va^2) \na f(x,t) -g(z,t,x/\va,t/\va^2)  \na  f(z,t) \big|   dz \\
 &\quad +C \int_{\R} \varphi_{1} (\tau) \big| \widetilde{S}_\de (g^\va \na  f)(x,t,x/\va,t/\va^2)- \widetilde{S}_\de (g^\va \na f) (x, t-\de^2\tau,x/\va,t/\va^2)\big| d\tau\\
 &\doteq I_1+I_2.
\end{split}
\end{align}
By the inequality,
\begin{align*}
 \fint_{B(x,\de)} |u(x)-u(z)|dz \leq C \int_{B(x,\de)} \frac{|\na u(z)|}{|z-x|^{d-1}} dz,
\end{align*}
where $C$ depends only on $d$, we deduce that
\begin{align*}
I_1\leq  C \|\na_x g \|_{\infty} \int_{B(x,\de)} \frac{|\na f(z,t)|}{|z-x|^{d-1}} dz+ C\| g \|_{\infty} \int_{B(x,\de)} \frac{|\na^2   f(z,t)|}{|z-x|^{d-1}} dz.
\end{align*}
Note that the operator $$ \mathcal{T} h=\int_{B(x,\de)} \frac{  h(z) }{|z-x|^{d-1}} dz $$
is bounded in $L^p(\R^d)$ with
$   \|\mathcal{T} h  \|_{L^p(\R^d)}\leq C \de \| h\|_{L^p(\R^d)}
$ for any $1\leq p\leq \infty$.
Therefore,
\begin{align}\label{plle2.2-3}
\|I_1\|_{L^p(\R^{d+1})}  \leq C \de \big\{ \|\na_x g \|_{\infty} \|\na  f \|_{L^p(\R^{d+1})}  +  \| g \|_{\infty}  \|\na^2  f \|_{L^p(\R^{d+1})} \big\}.
\end{align}

To deal with $ I_2$, we note that
\begin{align*}
&\big| \widetilde{S}_\de (g^\va \na  f)(x,t,x/\va,t/\va^2)- \widetilde{S}_\de (g^\va \na  f) (x, t-\de^2\tau,x/\va,t/\va^2)\big|\\
&\leq \Big|\int_{\R^d} \big[g(z,t,x/\va,t/\va^2) - g(z,t-\de^2\tau,x/\va,t/\va^2)\big] \na f(z,t) \varphi_{2,\de}(x-z)dz\Big|\\
&\quad+\Big|\int_{\R^d} g(z,t-\de^2\tau,x/\va,t/\va^2) \big[ \na f(z,t)- \na f(z,t-\de^2\tau) \big] \varphi_{2,\de}(x-z)dz\Big|\\
 &\leq \Big|\int_{\R^d}\int_0^{\de^2\tau} (\pa_t g)(z,t-s,x/\va,t/\va^2)\na f(z,t) \varphi_{2,\de}(x-z)dsdz\Big|\\
&\quad+\Big|\int_{\R^d} \int_0^{\de^2\tau}g(z,t-\de^2\tau,x/\va,t/\va^2)  \na \pa_t f(z,t-s) \varphi_{2,\de}(x-z) dsdz\Big|.
\end{align*}
By integration by parts and Fubini's theorem, we deduce that
\begin{align*}
\begin{split}
&\big\| \widetilde{S}_\de (g^\va \na  f)(x,t,x/\va,t/\va^2)- \widetilde{S}_\de (g^\va \na  f) (x, t-\de^2\tau,x/\va,t/\va^2)\big\|^p_{L^p(\R^{d+1})}\\
&\leq   \int_{\R^{d+1}}\Big|\int_{\R^d} \int_0^{\de^2\tau}(\pa_t g)(z,t-s,x/\va,t/\va^2)  \na f(z,t) \varphi_{2,\de}(x-z)dzds\Big|^p dxdt  \\
&\quad+  \int_{\R^{d+1}}\Big|\int_{\R^d}  \int_0^{\de^2\tau}(\na_xg)(z,t-\de^2\tau,x/\va,t/\va^2) \pa_t f(z,t-s) \varphi_{2,\de}(x-z)  dzds\Big|^p dxdt \\
&\quad+  \int_{\R^{d+1}}\Big|\int_{\R^d}  \int_0^{\de^2\tau} g(z,t-\de^2\tau,x/\va,t/\va^2)  \pa_t f(z,t-s)  \na\varphi_{2,\de}(x-z) dz ds\Big|^p dxdt \\
&\leq C \big\{\big(\de^2\tau \|\pa_t g\|_\infty  \|\na f\|_{L^p(\R^{d+1})}\big)^p+ \big(\de^2 \tau \|\na_x g\|_{\infty}  \|\pa_t f\|_{L^p(\R^{d+1})}\big)^p+ \big(\de \tau \|g\|_{\infty} \|\pa_t f\|_{L^p(\R^{d+1})}\big)^p\big\},
\end{split}
\end{align*}
which, combined with Minkowski's inequality, implies that
\begin{align}\label{plle2.2-4}
\begin{split}
\|I_2\|_{L^p(\R^{d+1})}
 &\leq  \int_{\R} \varphi_1(\tau)  \big\| \widetilde{S}_\de (g^\va \na  f)(x,t,x/\va,t/\va^2)\\
 &\quad\quad\quad\quad\quad\quad-\widetilde{S}_\de (g^\va \na f) (x, t\!-\!\de^2\tau,x/\va,t/\va^2)\big\|_{L^p(\R^{d+1})}  d\tau\\
 &\leq C \big(\de^2 \|\na_x g\|_{\infty} +  \de \|g\|_{\infty}\big)  \|\pa_t f\|_{L^p(\R^{d+1})}  +C \de^2  \|\pa_tg \|_{\infty} \|\na  f\|_{L^p(\R^{d+1})}.\end{split}\end{align}
%By Minkowskis' inequality and Fubini's theorem,
% \begin{align*}
% \begin{split}
%&\big\|  \int_0^{\va^2 \tau} \widetilde{S}_\va ((\pa_tg)^\va \na  f)(x,t-s,x/\va,t/\va^2) ds\big\|_{L^p(\R^{d+1})} \\
%&\leq \int_0^{\va^2 \tau} \big\| \widetilde{S}_\va ((\pa_tg)^\va \na   f)(x,t-s,x/\va,t/\va^2)\big\|_{L^p(\R^{d+1})}ds\\
%&\leq C \va^2  \|\pa_tg \|_{\infty} \|\na f\|_{L^p(\R^{d+1})}.
%\end{split}
%\end{align*}
%Likewise,
%\begin{align*}
% &\big\|  \int_0^{\va^2 \tau} \widetilde{S}_\va (g^\va \pa_t \na  f))(x,t-s,x/\va,t/\va^2) ds\big\|_{L^p(\R^{d+1})}\\
% &\leq \big\|  \int_0^{\va^2 \tau}  \widetilde{S}_\va ((\na_x g)^\va \pa_t f))(x,t-s,x/\va,t/\va^2) ds\big\|_{L^p(\R^{d+1})}\\
% &\quad+\big\|  \int_0^{\va^2 \tau}  (\na_x \widetilde{S}_\va (g^\va \pa_t f))(x,t-s,x/\va,t/\va^2) ds\big\|_{L^p(\R^{d+1})}\\
% &\leq  C \va^2 \|\na_x g\|_{\infty}  \|\pa_t f\|_{L^p(\R^{d+1})}+ C\va \|g\|_{\infty} \|\pa_t f\|_{L^p(\R^{d+1})}.
%\end{align*}
%As a result, we obtain that
%\begin{align*}
%\|I_2\|_{L^p(\R^{d+1})} \leq  C \big(\va^2 \|\na_x g\|_{\infty} +  \va \|g\|_{\infty}\big)  \|\pa_t f\|_{L^p(\R^{d+1})}  +C \va^2  \|\pa_tg \|_{\infty} \|\na  f\|_{L^p(\R^{d+1})},
%\end{align*}
By taking  \eqref{plle2.2-3} and \eqref{plle2.2-4} into \eqref{plle2.2-1}, one derives \eqref{le2.2re1} immediately.
 \end{proof}

\section{Approximation of  $\pa_t+\mathcal{L}^\lm_\va$}
In this section, we investigate the approximation of solutions to $\pa_t u^\lm_\va+\mathcal{L}^\lm_\va u^\lm_\va=F$ by solutions to $\pa_t u^\lm_0 +\mathcal{L}_0^\lm u^\lm_0=F$, where  $\mc{L}^\lm_\varepsilon, \mathcal{L}^\lm_0$ are given by \eqref{Llm} and  \eqref{nonl0}.

Let $u^\lm_\va$ be a weak solution to
 \begin{align} \label{ap-eq-va}
\pa_t u^\lm_\va +\mathcal{L}^\lm_\va u^\lm_\va  = F   \quad  \text{in } Q_2,
\end{align}
and  $u^\lm_0$ the weak solution to
  \begin{align}\label{ap-eq-ho}
\pa_t u^\lm_0+\mathcal{L}^\lm_0 u^\lm_0 =F \quad \text{ in }   Q_1,
\quad \text{and} \quad u^\lm_0=u^\lm_\va   \text{ on } \pa_p Q_1.
\end{align} Here and after $Q_r=Q_r(0,0)$. Let $\chi^\lm$ and $ \mathfrak{B}^\lm$ be the matrices of correctors and flux correctors given by \eqref{chi} and Lemma \ref{le-fcon}. Let $\de=(1+\sqrt{\lm})\va$ and assume that $\de<1/20.$
Define \begin{align} \label{def-w}
\begin{split}
w_\varepsilon^\lm &=u_\varepsilon^\lm-u_0^\lm -\varepsilon S_\de ( (\chi^\lm)^\varepsilon\na  u_0^\lm)\eta_\de   +\varepsilon^{2}  S_\de ( (\pa_{x_i }\mathfrak{B}^\lm_{i (d+1) j})^\varepsilon\pa_{x_j} u_0^\lm)\eta_\de \\
&\quad+ \varepsilon^{2}  S_\de ( (\mathfrak{B}^\lm_{i (d+1) j})^\varepsilon \pa_{x_i } \pa_{x_j} u_0^\lm)\eta_\de  + \varepsilon^{2}  S_\de ( (\mathfrak{B}^\lm_{i (d+1) j})^\varepsilon\pa_{x_j} u_0^\lm) \pa_{x_i } \eta_\de ,
 \end{split}
\end{align}
where $f^\va(x, t)=f(x,t,x/\va, t/\va^2)$, and $ \eta_\de   \in C_c^\infty(\R^{d+1})$ is a cut-off function such that $0\leq \eta_\de  \leq 1$ and
\begin{align}\label{rho}
 \begin{split}
 &\eta_\de  =1  \,\,\text{ in } Q_{1-4\de},\quad  \eta_\de  =0 \,\,\text{ in } Q_1\!\setminus\!Q_{1-3\de}, \\
& |\na \eta_\de |\leq C \de^{-1}, \quad |\pa_t \eta_\de |+ |\nabla^2 \eta_\de  |\leq C \de^{-2}.
\end{split}\end{align}.

\begin{lemma}\label{leap-2}
Assume that $A$ satisfies conditions (\ref{cod1}), (\ref{cod2})  and
\begin{align}\label{ainfty}
\|\na_x A\|_{\infty} +\|\pa_t A\|_{\infty}< \infty.
\end{align}
Let $w^\lm_\va$  be defined as in \eqref{def-w}.
  Then we have
\begin{align} \label{leap-2re}
\begin{split}
\|\na w^\lm_\va\|_{L^2(Q_1)}
&\leq  C \big\{\de\|\na_x A^\lm \|_{\infty} +1 \big\}  \|\na u^\lm_0\|_{L^2(Q_1\setminus Q_{1-5\de})} \\
&\quad
 +C \de\big(\|\na_x A^\lm\|_{\infty}\!+\!\de \|\pa_t A^\lm\|_{\infty}\big)\|\na u_0^\lm\|_{L^2(Q_1)}  \\
&\quad+ C\de \big (\de\|\na_x A^\lm\|_{\infty} +1 \big)
  \big (\|\na^2 u_0^\lm\|_{L^2(Q_{1-2\de})} +\|\pa_t u_0^\lm\|_{L^2(Q_{1-2\de})}\big),
  \end{split}
\end{align}
where $C$ depends only on $d$ and $\mu$.
\end{lemma}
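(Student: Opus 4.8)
The plan is to substitute $w_\va^\lm$ into the equation $\pa_t + \mathcal{L}_\va^\lm$ and compute $(\pa_t + \mathcal{L}_\va^\lm) w_\va^\lm$, exploiting the corrector identities \eqref{chi}, \eqref{divb=0}, and \eqref{huab1} to cancel the leading-order terms, then apply the energy estimate to the resulting equation. First I would recall that since $\pa_t u_\va^\lm + \mathcal{L}_\va^\lm u_\va^\lm = F$ and $\pa_t u_0^\lm + \mathcal{L}_0^\lm u_0^\lm = F$ in $Q_1$, and since $w_\va^\lm \in H_0^1$ of $Q_1$ with zero parabolic boundary values (because $\eta_\de$ vanishes near $\pa_p Q_1$ and $u_\va^\lm = u_0^\lm$ there), it suffices to bound the right-hand side of the equation for $w_\va^\lm$ in $H^{-1}$ and then invoke the standard energy inequality $\|\na w_\va^\lm\|_{L^2(Q_1)} \le C \|(\pa_t + \mathcal{L}_\va^\lm)w_\va^\lm\|_{L^2(0,1;H^{-1})}$.

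The core computation is to write $(\pa_t + \mathcal{L}_\va^\lm)(u_\va^\lm - u_0^\lm - \va S_\de((\chi^\lm)^\va \na u_0^\lm)\eta_\de) $ and show that, after inserting the flux-corrector terms in \eqref{def-w}, everything organizes into a divergence-form expression $\operatorname{div}(h)$ with $\|h\|_{L^2}$ controlled by the right-hand side of \eqref{leap-2re}. The mechanism is the classical one: the formal two-scale expansion with correctors cancels the $O(1)$ part of the flux, the relation $\pa_{y_i}B^\lm_{ij} + \pa_s B^\lm_{(d+1)j} = 0$ lets one write the flux defect $B^\lm$ as a divergence of the skew-symmetric $\mathfrak{B}^\lm$ via \eqref{huab1}, and the skew-symmetry produces an extra cancellation so that the remaining terms all carry a factor of $\va$ (or $\de$) or are supported where $\na\eta_\de \ne 0$, i.e. on $Q_1 \setminus Q_{1-5\de}$. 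The terms involving $\pa_{x_i}\mathfrak{B}^\lm_{i(d+1)j}$ and $\pa_{x_i}\pa_{x_j}u_0^\lm$ in \eqref{def-w} are precisely what is needed to absorb the $x$-dependence of the locally periodic coefficients; here is where the estimates \eqref{eshuab} of Lemma \ref{le-fcon} with $\mathcal{D} = \na_x$ or $\pa_t$ enter, together with the smoothing estimates \eqref{le20-re0}, \eqref{le20-re1}, and \eqref{le2.2re1} of Lemmas \ref{le2.0} and \ref{le2.2}, which quantify $\|g^\va \na f - S_\de(g^\va \na f)\|_{L^p}$ in terms of $\de$, $\|\na_x g\|_\infty$, $\|\pa_t g\|_\infty$. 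Note that $\mathfrak{B}^\lm_{i(d+1)j}$ itself only satisfies the weaker bound \eqref{huab3} with the factor $(1+\lm)$, which is exactly why $\de = (1+\sqrt\lm)\va$ is chosen: each $\va^2 \mathfrak{B}^\lm$ contributes $\va^2(1+\lm) \lesssim \de^2$ after using $\va^{-2}$ from a time derivative hitting $S_\de$.

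The main obstacle I expect is the bookkeeping of the many error terms: one must carefully separate (i) terms supported on the collar $Q_1 \setminus Q_{1-5\de}$ where $\na\eta_\de, \pa_t\eta_\de, \na^2\eta_\de$ are nonzero — these give the first line of \eqref{leap-2re} with the $\|\na u_0^\lm\|_{L^2(Q_1\setminus Q_{1-5\de})}$ factor, noting $|\na\eta_\de|\le C\de^{-1}$ pairs against an $\va$ to give $O(1)$ and an $\va^2\mathfrak{B}^\lm$ pairs against $\de^{-2}$ to give $O(1)$ as well; (ii) terms from the commutator $[\mathcal{L}_\va^\lm, S_\de]$ and from $g^\va\na f \ne S_\de(g^\va\na f)$, which via Lemma \ref{le2.2} produce the $\de(\|\na_x A^\lm\|_\infty + \de\|\pa_t A^\lm\|_\infty)$-type factors against $\|\na u_0^\lm\|$ and $\|\na^2 u_0^\lm\| + \|\pa_t u_0^\lm\|$; and (iii) genuinely higher-order terms carrying an explicit $\va$ or $\de$. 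One also needs to confirm $\pa_t u_0^\lm$ and $\na^2 u_0^\lm$ are in $L^2(Q_{1-2\de})$, which follows from interior regularity for the homogenized equation $\pa_t u_0^\lm + \mathcal{L}_0^\lm u_0^\lm = F$ with Hölder coefficients $\widehat{A^\lm}$ (Lemma \ref{le-co} gives the needed modulus of continuity), though since $F$ is only assumed in appropriate spaces one should track that the stated estimate \eqref{leap-2re} only involves $\na^2 u_0^\lm$ and $\pa_t u_0^\lm$ on the interior domain $Q_{1-2\de}$. Once all error contributions are collected and estimated in $L^2$ using Lemmas \ref{le-fcon}, \ref{le2.0}, \ref{le2.2} and the bounds $\|\na_y\chi^\lm\|, \|\chi^\lm\|$, $\|\mathfrak{B}^\lm\|$, the inequality \eqref{leap-2re} follows by summation.
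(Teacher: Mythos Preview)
Your proposal is correct and follows essentially the same route as the paper: compute $(\pa_t+\mathcal{L}_\va^\lm)w_\va^\lm$ directly, use the corrector identity and the skew-symmetry of $\mathfrak{B}^\lm$ (via \eqref{huab1}) to organize the defect into divergence form, bound each resulting term using Lemmas \ref{le-fcon}, \ref{le2.0}, \ref{le2.2} together with the observation $(1+\lm)\va^2\le\de^2$, and conclude by testing against $w_\va^\lm$. The only minor remark is that the $L^2(Q_{1-2\de})$ finiteness of $\na^2 u_0^\lm$ and $\pa_t u_0^\lm$ is not part of this lemma's proof---those quantities simply appear on the right-hand side of \eqref{leap-2re} and are estimated later in Lemma \ref{leap-3}.
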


\begin{proof} By the definition of $A^\lm$, it is obvious that $\|\na_xA\|_\infty=\|\na_xA^\lm\|_\infty$ and $\|\pa_tA\|_\infty=\|\pa_tA^\lm\|_\infty.$
By \eqref{defb} and direct calculations,
\begin{align}\label{pl311}
(\pa_t +\mathcal{L}^\lm_\va)  w^\lm_\va &=-\text{div}\big\{ (\widehat{A^\lm}
 -  {(A^\lm)^\va}) \na u^\lm_0- S_\de ( (\widehat{A^\lm}
 -  {(A^\lm)^\va}) \na u^\lm_0)\eta_\de  \big\}  +\text{div}  \{ S_\de ((B^\lm)^\va \na u^\lm_0)\eta_\de \}  \nonumber\\
&\quad - \text{div}\big\{    S_\de ({(A^\lm)^\va}(\na_y\chi^\lm)^\va \na u^\lm_0)\eta_\de  \big\}+\text{div}\big\{    {(A^\lm)^\va}  S_\de ( (\na_y\chi^\lm)^\va \na u^\lm_0)\eta_\de  \big\} \nonumber\\
&\quad+\va \text{div} \big\{{(A^\lm)^\va}  S_\de ( (\na_x\chi^\lm)^\va \na u^\lm_0)\eta_\de  +{(A^\lm)^\va}  S_\de ( (\chi^\lm)^\va \na^2 u^\lm_0)\eta_\de \big\}\nonumber\\
&\quad+\va \text{div} \big\{ {(A^\lm)^\va}  S_\de ( (\chi^\lm)^\va \na u^\lm_0) \na \eta_\de  \big\}- \va \pa_t  \big\{S_\de ( (\chi^\lm)^\va  \na u^\lm_0)\eta_\de  \big\} \\
&\quad+(\pa_t +\mathcal{L}^\lm_\va)  \big\{ \va^2  S_\de ( ( \pa_{x_i}\mathfrak{B}^\lm_{i (d+1) j})^\va \pa_{x_j}u^\lm_0) \eta_\de   \big\}\nonumber\\
&\quad  +(\pa_t +\mathcal{L}^\lm_\va)  \big\{ \va^2  S_\de ( (\mathfrak{B}^\lm_{i (d+1) j})^\va \pa_{x_i} \pa_{x_j}u^\lm_0)\eta_\de   \big\}\nonumber\\
&\quad  +(\pa_t +\mathcal{L}^\lm_\va)  \big\{ \va^2  S_\de ( (\mathfrak{B}^\lm_{i (d+1) j})^\va \pa_{x_j}u^\lm_0)\pa_{x_i}\eta_\de   \big\},\nonumber
\end{align}
where $(B^\lm)^\va(x,t)=B^\lm(x,t,x/\va,t/\va^2)$ is defined in \eqref{defb}.
By \eqref{defb} and \eqref{huab1}, we have
  \begin{align}\label{pl313}
  \begin{split}
 \pa_t [S_\de ( (\chi^\lm)^\va   \na u^\lm_0)\eta_\de ]&=- \pa_t [S_\de ( (\pa_{y_k} \mathfrak{B}^\lm_{k (d+1)j} )^\va  \pa_{x_j} u^\lm_0)\eta_\de  ]  \\
  &=-\va \pa_t \pa_{x_k} \big\{ S_\de (  (\mathfrak{B}^\lm_{k(d+1)j})^\va  \pa_{x_j} u^\lm_0) \eta_\de \big\}\\
  &\quad+ \va \pa_t \big\{ S_\de ( (\pa_{x_k} \mathfrak{B}^\lm_{k(d+1)j})^\va  \pa_{x_j} u^\lm_0) \eta_\de  \big\}\\
  &\quad+ \va \pa_t \big\{ S_\de (  (\mathfrak{B}^\lm_{k(d+1)j})^\va \pa_{x_k}  \pa_{x_j} u^\lm_0)\eta_\de \big\}\\
  &\quad
   + \va \pa_t \big\{ S_\de (  (\mathfrak{B}^\lm_{k(d+1)j})^\va  \pa_{x_j} u^\lm_0) \pa_{x_k}\eta_\de \big\},
   \end{split}
  \end{align}
and
  \begin{align} \label{pl314}
\text{div} \big\{ S_\de (    (B^\lm)^\va   \na u^\lm_0)\eta_\de  \big\}
 &=  \pa_{x_i} \big\{ S_\de (  (\pa_{y_k}  \mathfrak{B}^\lm_{kij})^\va   \pa_{x_j} u^\lm_0)\eta_\de  \big\}+ \pa_{x_i} \big\{ S_\de (  (\pa_s  \mathfrak{B}^\lm_{(d+1)ij})^\va   \pa_{x_j} u^\lm_0)\eta_\de \big\} \nonumber\\
  &= \va \pa_{x_i} \big\{ \pa_{x_k} [S_\de ((\mathfrak{B}^\lm_{kij})^\va   \pa_{x_j} u^\lm_0)\eta_\de  ]\big\}-\va \pa_{x_i} \big\{  S_\de ((\pa_{x_k}\mathfrak{B}^\lm_{kij})^\va   \pa_{x_j} u^\lm_0)\eta_\de  \big\}\nonumber\\
  &\quad-\va \pa_{x_i} \big\{  S_\de ((\mathfrak{B}^\lm_{kij})^\va   \pa_{x_k}  \pa_{x_j} u^\lm_0)\eta_\de   \big\} -\va \pa_{x_i} \big\{  S_\de ((\mathfrak{B}^\lm_{kij})^\va   \pa_{x_j} u^\lm_0) \pa_{x_k} \eta_\de   \big\}\nonumber\\
  &\quad+\va^2 \pa_{x_i}\pa_t \big\{ S_\de (  (\mathfrak{B}^\lm_{(d+1)ij})^\va   \pa_{x_j} u^\lm_0)\eta_\de  \big\}\\
  &\quad
  - \va^2\pa_{x_i} \big\{ S_\de (  (\pa_t\mathfrak{B}^\lm_{(d+1)ij})^\va  \pa_{x_j} u^\lm_0) \eta_\de  \big\}\nonumber\\
  &\quad- \va^2\pa_{x_i}  \big\{ S_\de (  (\mathfrak{B}^\lm_{(d+1)ij})^\va   \pa_t \pa_{x_j} u^\lm_0) \eta_\de  \big\}\nonumber\\
  &\quad- \va^2\pa_{x_i}  \big\{ S_\de (  (\mathfrak{B}^\lm_{(d+1)ij})^\va  \pa_{x_j} u^\lm_0) \pa_t\eta_\de   \big\}.\nonumber
  \end{align}
 By taking \eqref{pl313} and \eqref{pl314} into \eqref{pl311}, and using the skew-symmetry of $\mathfrak{B}^\lm$, we derive that
 \begin{equation}
\label{leap-1re}
\aligned
 ( \pa_t+ \mathcal{L}^\lm_\va) w^\lm_\va
  &=-\text{div}\big\{ (\widehat{A^\lm}
 -  {(A^\lm)^\va}) \na u^\lm_0- S_\de ( (\widehat{A^\lm} -  {(A^\lm)^\va}) \na u^\lm_0)\eta_\de  \big\}   \\
 &\quad - \text{div}\big\{    S_\de ({(A^\lm)^\va}(\na_y\chi^\lm)^\va \na u^\lm_0)\eta_\de  -   {(A^\lm)^\va}  S_\de ( (\na_y\chi^\lm)^\va \na u^\lm_0)\eta_\de  \big\} \\
 &\quad+\va \text{div} \big\{{(A^\lm)^\va}  S_\de ( (\na_x\chi^\lm)^\va \na u^\lm_0)\eta_\de \big\}  + \va\text{div}\big\{{(A^\lm)^\va}  S_\de ( (\chi^\lm)^\va \na^2 u^\lm_0) \eta_\de  \big\} \\
 &\quad+  \va\text{div}\big\{{(A^\lm)^\va}  S_\de ( (\chi^\lm)^\va \na u^\lm_0)\na\eta_\de    \big\}-\va \pa_{x_i} \big\{  S_\de ((\pa_{x_k}\mathfrak{B}^\lm_{kij})^\va   \pa_{x_j} u^\lm_0)\eta_\de \big\}\\
 &\quad -\va \pa_{x_i} \big\{  S_\de ((\mathfrak{B}^\lm_{kij})^\va    \pa_{x_j} u^\lm_0)\pa_{x_k}\eta_\de  \big\} -\va \pa_{x_i} \big\{  S_\de ((\mathfrak{B}^\lm_{kij})^\va   \pa_{x_k}  \pa_{x_j} u^\lm_0)\eta_\de \big\}\\
  &\quad- \va^2\pa_{x_i} \big\{ S_\de (  (\pa_t\mathfrak{B}^\lm_{(d+1)ij})^\va   \pa_{x_j} u^\lm_0)\eta_\de  \big\}\\
  &\quad
  - \va^2\pa_{x_i}  \big\{ S_\de (  (\mathfrak{B}^\lm_{(d+1)ij})^\va   \pa_t \pa_{x_j} u^\lm_0) \eta_\de  \big\}\\
  &\quad - \va^2\pa_{x_i}  \big\{ S_\de (  (\mathfrak{B}^\lm_{(d+1)ij})^\va     \pa_{x_j} u^\lm_0)\pa_t\eta_\de \big\} \\
  &\quad- \va^2 \text{div} \big\{ (A^\lm)^\va \na \big[ S_\de ( (\mathfrak{B}^\lm_{i (d+1) j})^\va \pa_{x_i} \pa_{x_j}u^\lm_0)\eta_\de   \big]\big\}\\
  &\quad - \va^2 \text{div} \big\{ (A^\lm)^\va \na \big[ S_\de ( (\mathfrak{B}^\lm_{i (d+1) j})^\va   \pa_{x_j}u^\lm_0)\pa_{x_i}\eta_\de   \big]\big\} \\
 &\quad -\va^2 \text{div} \big\{ (A^\lm)^\va \na \big[ S_\de ( ( \pa_{x_i}\mathfrak{B}^\lm_{i (d+1) j})^\va \pa_{x_j}u^\lm_0) \eta_\de  \big]\big\}.
\endaligned
\end{equation}
As a result,
\begin{align}
 &\Big| \int_{-1}^0 \big\langle (\pa_t +\mathcal{L}^\lm_\va)  w^\lm_\va, \phi\rangle dt   \Big|\nonumber\\
   &\leq\int_{Q_1} \Big|\big\{ (\widehat{A^\lm}
 -  {(A^\lm)^\va}) \na u^\lm_0- S_\de ( (\widehat{A^\lm}
 -  {(A^\lm)^\va}) \na u^\lm_0) \eta_\de \big\} \na \phi  \Big| \nonumber\\
  &\quad+  \Big|\int_{Q_1} \big\{ S_\de ({(A^\lm)^\va}(\na_y\chi^\lm)^\va \na u^\lm_0)\eta_\de  -   {(A^\lm)^\va}  S_\de ( (\na_y\chi^\lm)^\va \na u^\lm_0)\eta_\de  \big\}  \na \phi \Big|\nonumber\\
  &\quad+C\va\int_{Q_1}\Big\{  \big| S_\de ( (\na_x\chi^\lm)^\va \na u^\lm_0)\eta_\de  \big| + \big|  S_\de ( (\chi^\lm)^\va  \na^2 u^\lm_0) \eta_\de \big| \Big\} |\na \phi |\nonumber\\
   &\quad + C \va\int_{Q_1}  \big|  S_\de ( (\chi^\lm)^\va  \na u^\lm_0)\na\eta_\de \big|   |\na \phi |\nonumber\\
   &\quad+\va  \int_{Q_1}  \big| S_\de ((\pa_{x_k}\mathfrak{B}^\lm_{kij})^\va  \pa_{x_j} u^\lm_0) \eta_\de  \pa_{x_i} \phi \big|\nonumber\\
    &\quad+\va   \int_{Q_1}  \Big\{ \big|S_\de ((\mathfrak{B}^\lm_{kij})^\va   \pa_{x_k} \pa_{x_j} u^\lm_0)\eta_\de \big| +\big|S_\de ((\mathfrak{B}^\lm_{kij})^\va     \pa_{x_j} u^\lm_0)\pa_{x_k} \eta_\de \big| \Big\}  |\pa_{x_i}  \phi  | \nonumber  \\
 &\quad+  \va^2  \Big|\int_{Q_1}   S_\de (  (\pa_t\mathfrak{B}^\lm_{(d+1)ij})^\va  \pa_{x_j} u^\lm_0) \eta_\de   \pa_{x_i} \phi\Big|\nonumber\\
 &\quad + \va^2  \Big|\int_{Q_1}   S_\de (  (\mathfrak{B}^\lm_{(d+1)ij})^\va    \pa_{x_j} u^\lm_0)\pa_t\eta_\de    \pa_{x_i} \phi\Big|
 \nonumber \\
  &\quad + \va^2  \Big|\int_{Q_1}   S_\de (  (\mathfrak{B}^\lm_{(d+1)ij})^\va   \pa_t \pa_{x_j} u^\lm_0)\eta_\de    \pa_{x_i} \phi\Big|
 \nonumber \\
&\quad+  C\va^2  \int_{Q_1}  \big|\na \big\{ S_\de ( (\mathfrak{B}^\lm_{i (d+1) j})^\va \pa_{x_i} \pa_{x_j}u^\lm_0)\eta_\de    \big\} \big| |\na \phi |\nonumber\\
&\quad+  C\va^2  \int_{Q_1}  \big|\na \big\{ S_\de ( (\mathfrak{B}^\lm_{i (d+1) j})^\va  \pa_{x_j}u^\lm_0)\pa_{x_i}\eta_\de  \big\}\big|  | \na \phi |\nonumber\\
&\quad+  C \va^2 \int_{Q_1}   \big|\na  \big\{ S_\de ( (\pa_{x_i}\mathfrak{B}^\lm_{i (d+1) j})^\va \pa_{x_j}u^\lm_0)\eta_\de  \big\}\big|  | \na \phi|\nonumber\\
&\doteq I_1+\cdot\cdot\cdot+I_{12}.\label{pl351}
\end{align}

Next let us estimate  $I_1$--$I_{12}$ one by one. Note that
\begin{align*}
&\big|  (\widehat{A^\lm}
 -  {(A^\lm)^\va}) \na u^\lm_0- S_\de ( (\widehat{A^\lm}
 -  {(A^\lm)^\va}) \na u^\lm_0) \eta_\de   \big|\\
 &= \big| (\widehat{A^\lm}
 -  {(A^\lm)^\va}) \na u^\lm_0 (1-\eta_\de ) \big|
 + | S_\de ( (\widehat{A^\lm}
 -  {(A^\lm)^\va}) \na u^\lm_0)\eta_\de  -(\widehat{A^\lm}-{(A^\lm)^\va}) \na u^\lm_0\eta_\de \big|.
 \end{align*}
By the Cauchy inequality and Lemma \ref{le2.2},
 \begin{align*}
I_1& \leq  C \|\na u^\lm_0\|_{L^2(Q_1\setminus Q_{1-5\de})} \|\na \phi\|_{L^2(Q_1\setminus Q_{1-5\de})} \\
 &\quad+ C\de \big ( \de  \|\pa_t A^\lm   \|_{\infty} +\|\na_x A^\lm \|_{\infty}\big) \|\na u^\lm_0\|_{L^2( Q_{1-2\de})} \|\na \phi\|_{L^2(Q_1)}\\
 &\quad+C\de\big( \de \|\na_x A^\lm \|_{\infty}+1\big) \big\{ \|\na^2 u^\lm_0\|_{L^2( Q_{1-2\de})} +\|\pa_t u^\lm_0\|_{L^2( Q_{1-2\de})}  \big\} \|\na \phi\|_{L^2(Q_1)},
\end{align*}
where we have also used the observation (see \eqref{lecore2})
\begin{align*}
\|\pa_t \widehat{A^\lm} \|_{\infty}\leq C \|\pa_t A^\lm   \|_{\infty}  \quad\text{and}\quad \|\na  \widehat{A^\lm} \|_{\infty}\leq C \|\na_x A^\lm  \|_{\infty}.
\end{align*}

To estimate $I_2$, we observe that    \begin{align*}
  &| S_\de ({(A^\lm)^\va}(\na_y\chi^\lm)^\va \na u^\lm_0) -   {(A^\lm)^\va}  S_\de ( (\na_y\chi^\lm)^\va \na u^\lm_0)|\\
  &\leq C \Big|\fint_{B(x,\de)}\!\fint_{I(t,\de^2)}\!\!\!\big[A^\lm(z,\tau,x/\va,t/\va^2)\!-\!A^\lm(x,t,x/\va,t/\va^2)\big] \na_y\chi^\lm(z,\tau,x/\va,t/\va^2)  \na u^\lm_0 (z,\tau)dzd\tau\Big|\\
  &\leq C \de\big\{\|\na_x A^\lm \|_{\infty}+\de\|\pa_t A^\lm  \|_{\infty}\big\}\fint_{B(x,\de)}\!\fint_{I(t,\de^2)} |  \na_y\chi^\lm(z,\tau,x/\va,t/\va^2)||\na u^\lm_0 (z,\tau)|dzd\tau,
\end{align*}
where $I(t,\de^2)=(t-\de^2, t+\de^2)$.
By Cauchy's inequality and Fubini's theorem,
\begin{align*}
I_2&\leq  C \de\big\{\|\na_x A^\lm \|_{\infty}+\de \|\pa_t A^\lm  \|_{\infty}\big\} \|\na \phi\|_{L^2(Q_1)} \\
 &\quad\times \big\|\eta_\de  \fint_{B(x,\de)}\!\fint_{I(t,\de^2)} |\na_y\chi^\lm(z,\tau,x/\va,t/\va^2)||\na u^\lm_0 (z,\tau)|dzd\tau \big\|_{L^2(Q_1)}\\
 &\leq C \de\big\{\|\na_x A^\lm \|_{\infty}+\de \|\pa_t A^\lm  \|_{\infty}\big\} \|\na u^\lm_0\|_{L^2(Q_1)}\|\na \phi\|_{L^2(Q_1)}.
\end{align*}
For the last step, we have used the estimate
\begin{align}\label{o1o1}
\fint_{Q_{1+\sqrt{\lm}}} | \na_y \chi^\lm (x,t,y,s)|^2 + | \chi^\lm (x,t,y,s)|^2 dyds\leq C,
\end{align}
which is direct consequence of  \eqref{lecore0}.

We pass to $I_3$--$I_6$.  Note that \eqref{lecore3} implies
 \begin{align}\label{o1o2}
\fint_{Q_{1+\sqrt{\lm}}} | \mathcal{D} \na_y \chi^\lm (x,t,y,s)|^2 + |\mathcal{D }\chi^\lm (x,t,y,s)|^2 dyds\leq C \|\mathcal{D }A^\lm\|_{\infty}
\end{align}
 for $\mathcal{D}=\na_x$ or $\pa_t$.
By using \eqref{o1o1}, \eqref{o1o2} and similar estimates for $\mathfrak{B}^\lm$ (which can be deduced from \eqref{huab2}-\eqref{eshuab}), and Lemma \ref{le2.0}, we deduce that
\begin{align*}
I_3+...+I_6 &\leq C \va \big\{\|\na_x A^\lm \|_{\infty} \| \na u^\lm_0\|_{L^2(Q_1)} + \|\na^2 u^\lm_0\|_{L^2( Q_{1-2\de})}\big\}   \|\na \phi\|_{L^2(Q_1)}\\
& \quad+ C  \|\na u^\lm_0\|_{L^2(Q_1\setminus Q_{1-5\de})} \|\na \phi\|_{L^2(Q_1\setminus Q_{1-5\de})}.
\end{align*}
In a similar way, we can bound $I_7$--$I_9$ as following
 \begin{align*}
I_7+I_8+I_9&\leq C  \de^2\|\pa_t A^\lm  \|_{\infty} \|\na u^\lm_0\|_{L^2(Q_1)}    \|\na \phi\|_{L^2(Q_1)}\\
& \quad+  C  \|\na u^\lm_0\|_{L^2(Q_1\setminus Q_{1-5\de})} \|\na \phi\|_{L^2(Q_1\setminus Q_{1-5\de})}\\
&\quad + \de ( 1+ \de\|\na_x A^\lm \|_{\infty} ) \|\pa_t u^\lm_0\|_{L^2(Q_{1-2\de})}  \|\na \phi\|_{L^2(Q_1)},
 \end{align*}where we have used the observation $(1+\lm)\va^2\leq \de^2.$

For $I_{10}$, we use Lemma \ref{le2.0} and the estimates similar to \eqref{o1o1} and \eqref{o1o2} for  $\mathfrak{B}^\lm$, to deduce that
\begin{align*}
 I_{10}&\leq C\va^2 \| S_\de ( (\mathfrak{B}^\lm_{i (d+1) j})^\va \pa_{x_i} \pa_{x_j}u^\lm_0)\na \eta_\de  \|_{L^2(Q_1)}\|\na \phi\|_{L^2(Q_1\setminus Q_{1-5\de})}\\
   & \quad+C\va^2\| \na_x \big\{ S_\de ( ( \mathfrak{B}^\lm_{i (d+1) j})^\va \pa_{x_i} \pa_{x_j}u^\lm_0)\big\} \eta_\de \|_{L^2(Q_1)}\|\na \phi\|_{L^2(Q_1)}\\
   &\leq C  (1+   \de\|\na_x A^\lm \|_{\infty} )  \|\na u^\lm_0\|_{L^2(Q_1\setminus Q_{1-5\de})}  \|\na \phi\|_{L^2(Q_1\setminus Q_{1-5\de})}\\
&\quad+ C  \de \|\na^2 u^\lm_0\|_{L^2( Q_{1-2\de})}  \|\na \phi\|_{L^2(Q_1)} .
\end{align*}
Likewise, we can bound $I_{11}$ and $I_{12}$ as follows,
\begin{align*}
I_{11}&\leq C (1+\de \|\na_x A^\lm \|_{\infty})  \|\na u^\lm_0\|_{L^2(Q_1\setminus Q_{1-5\de})}   \|\na \phi\|_{L^2(Q_1\setminus Q_{1-5\de})},\\
 I_{12}&\leq C  \de   \|\na_x A^\lm \|_{\infty} \|\na u^\lm_0\|_{L^2(Q_1\setminus Q_{1-5\de})}  \|\na \phi\|_{L^2(Q_1\setminus Q_{1-5\de})}\\
 &\quad+C\de\|\na_x A^\lm \|_{\infty} \|\na u^\lm_0\|_{L^2(Q_{1})}   \|\na \phi\|_{L^2( Q_{1})}    .\nonumber
\end{align*}
Finally, taking the estimates for $I_1$--$I_{12}$ into \eqref{pl351}, it follows that
\begin{align}\label{pl316}
\begin{split}
& \Big| \int_{-1}^0 \big\langle (\pa_t +\mathcal{L}^\lm_\va)  w^\lm_\va, \phi\rangle dt   \Big|\\
  &\leq C\big\{\de\|\na_x A^\lm \|_{\infty} +1 \big\}  \|\na u^\lm_0\|_{L^2(Q_1\setminus Q_{1-5\de})} \|\na \phi\|_{L^2(Q_1\setminus Q_{1-5\de})} \\
 &\quad+ C\de \big\{ \de  \|\pa_t A^\lm   \|_{\infty} +\|\na_x A^\lm \|_{\infty}\big\} \|\na u^\lm_0\|_{L^2( Q_1)} \|\na \phi\|_{L^2(Q_1)}\\
 &\quad+C\de \big\{ \de \|\na_x A^\lm \|_{\infty}+1\big\} \big\{ \|\na^2 u^\lm_0\|_{L^2( Q_{1-2\de})} +\|\pa_t u^\lm_0\|_{L^2( Q_{1-2\de})}  \big\} \|\na \phi\|_{L^2(Q_1)},
 \end{split}
\end{align}
which gives \eqref{leap-2re} by setting $\phi=w^\lm_\va$.
\end{proof}

\begin{lemma}\label{leap-3}
Assume that $A$ satisfies conditions (\ref{cod1}), (\ref{cod2}) and \eqref{ainfty}.
 Let $u^\lm_\va$ be a weak solution to \eqref{ap-eq-va} and $u^\lm_0$ the weak  solution to \eqref{ap-eq-ho}.  Then for $\de=(1+\sqrt{\lm})\va,$
 \begin{align}\label{leap-3re}
 \begin{split}
  \Big(\fint_{Q_1}  | u^\lm_\va-u^\lm_0  |^2 \Big )^{1/2}
 &\leq  C \big(\de^\sigma \!+\! \de \|\na_x A^\lm\|_{\infty}+ \de^2\|\pa_t A^\lm\|_{\infty}\big) \big(\de \|\na_x A^\lm\|_{\infty}\! +\!1 \big)\\
 & \quad\times \Big\{ \Big (\fint_{Q_2}  | u^\lm_\va  |^2  \Big )^{1/2} + \Big (\fint_{Q_2}  |F |^2 \Big )^{1/2} \Big\},
 \end{split}
 \end{align}
 where $0<\sigma<1 $ and $C$ depend  only on $d$ and $\mu$.
\end{lemma}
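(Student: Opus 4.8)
The plan is to transfer the gradient estimate of Lemma~\ref{leap-2} for the two-scale corrector $w_\va^\lm$ in \eqref{def-w} into an $L^2$-bound for $u_\va^\lm-u_0^\lm$ itself. Solving the identity \eqref{def-w} for $u_\va^\lm-u_0^\lm$ writes it as $w_\va^\lm$ plus the correction terms $\va S_\de((\chi^\lm)^\va\na u_0^\lm)\eta_\de$ and the three $\va^2$-terms built from $\mathfrak{B}^\lm_{i(d+1)j}$ carrying $\eta_\de$ or $\na\eta_\de$. Since $\eta_\de$ and $\na\eta_\de$ vanish near the lateral face $\pa B_1\times(-1,0)$ and $u_\va^\lm=u_0^\lm$ on $\pa_p Q_1$, the corrector $w_\va^\lm$ vanishes on $\pa B_1\times(-1,0)$, so Poincar\'{e}'s inequality applied slicewise in $x$ gives $\|w_\va^\lm\|_{L^2(Q_1)}\le C\|\na w_\va^\lm\|_{L^2(Q_1)}$, which is estimated by \eqref{leap-2re}. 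For the correction terms I would use Lemma~\ref{le2.0} with $\ell=0$, noting that the $\mathcal{Q}_{\de/\va}$-averages appearing there are comparable to the periodicity-cell averages because $\de/\va=1+\sqrt\lm$; together with \eqref{lecore0}, \eqref{huab2}--\eqref{eshuab}, and $\va\le\de$, $(1+\lm)\va^2\le\de^2$, $|\na\eta_\de|\le C\de^{-1}$, this bounds each correction term in $L^2(Q_1)$ by $C\de\|\na u_0^\lm\|_{L^2(Q_1)}+C\de^2\|\na_x A^\lm\|_\infty\|\na u_0^\lm\|_{L^2(Q_1)}+C\de^2\|\na^2 u_0^\lm\|_{L^2(Q_{1-2\de})}+C\de\|\na u_0^\lm\|_{L^2(Q_1\setminus Q_{1-4\de})}$. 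The problem is thereby reduced to controlling $\|\na u_0^\lm\|_{L^2(Q_1\setminus Q_{1-5\de})}$, $\|\na u_0^\lm\|_{L^2(Q_1)}$ and $\|\na^2 u_0^\lm\|_{L^2(Q_{1-2\de})}+\|\pa_t u_0^\lm\|_{L^2(Q_{1-2\de})}$ by $\big(\fint_{Q_2}|u_\va^\lm|^2\big)^{1/2}+\big(\fint_{Q_2}|F|^2\big)^{1/2}$.

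For the homogenized solution I would first note that $v=u_0^\lm-u_\va^\lm$ solves $\pa_t v-\text{div}(\widehat{A^\lm}\na v)=\text{div}((\widehat{A^\lm}-(A^\lm)^\va)\na u_\va^\lm)$ in $Q_1$ with $v=0$ on $\pa_p Q_1$, so the energy estimate together with an interior Caccioppoli inequality for $u_\va^\lm$ on a cylinder $Q_{3/2}$ lying strictly inside $Q_2$ yields $\|\na u_0^\lm\|_{L^2(Q_1)}\le C\big\{(\fint_{Q_2}|u_\va^\lm|^2)^{1/2}+(\fint_{Q_2}|F|^2)^{1/2}\big\}$. To gain a small power of $\de$ in the boundary shell, I would invoke a Meyers--Gehring argument: interior higher integrability of $\na u_\va^\lm$ in $Q_2$, followed by global higher integrability for the zero--parabolic-boundary problem solved by $v$ in $Q_1$, gives $\na u_0^\lm\in L^{2+\sigma'}(Q_1)$ with its $L^{2+\sigma'}$-norm bounded by the same data, where $\sigma'>0$ depends only on $d,\mu$. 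Since $|Q_1\setminus Q_{1-5\de}|\le C\de$, H\"{o}lder's inequality then gives $\|\na u_0^\lm\|_{L^2(Q_1\setminus Q_{1-5\de})}\le C\de^{\sigma}\big\{(\fint_{Q_2}|u_\va^\lm|^2)^{1/2}+(\fint_{Q_2}|F|^2)^{1/2}\big\}$ with $\sigma=\tfrac{\sigma'}{2(2+\sigma')}\in(0,1)$ depending only on $d,\mu$.

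The crux is the interior second-order estimate
\[
\|\na^2 u_0^\lm\|_{L^2(Q_{1-2\de})}+\|\pa_t u_0^\lm\|_{L^2(Q_{1-2\de})}\le C\de^{\sigma-1}\Big\{\Big(\fint_{Q_2}|u_\va^\lm|^2\Big)^{1/2}+\Big(\fint_{Q_2}|F|^2\Big)^{1/2}\Big\},
\]
so that, carrying the factor $\de$ in \eqref{leap-2re}, these terms contribute only at order $\de^{\sigma}$. One may assume $\de\|\na_x A^\lm\|_\infty+\de^2\|\pa_t A^\lm\|_\infty\le c_0$ for a small $c_0=c_0(d,\mu)$, since otherwise \eqref{leap-3re} is immediate from $\|u_\va^\lm-u_0^\lm\|_{L^2(Q_1)}\le C\big\{(\fint_{Q_2}|u_\va^\lm|^2)^{1/2}+(\fint_{Q_2}|F|^2)^{1/2}\big\}$. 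By \eqref{lecore3'} the coefficient $\widehat{A^\lm}$ is Lipschitz in $(x,t)$, with oscillation $\lesssim r\|\na_x A^\lm\|_\infty+r^2\|\pa_t A^\lm\|_\infty$ on a cylinder of size $r$; decomposing $Q_{1-2\de}$ into parabolic dyadic shells $\Sigma_k$ at distance $\sim 2^{-k}\gtrsim\de$ from $\pa_p Q_1$ (so $|\Sigma_k|\le C2^{-k}$), applying on each the interior $W^{2,1}_2$ estimate $\|\na^2 u_0^\lm\|_{L^2(\Sigma_k)}\le C2^{k}\|\na u_0^\lm\|_{L^2(\widetilde\Sigma_k)}+C\|F\|_{L^2(\widetilde\Sigma_k)}$, bounding $\|\na u_0^\lm\|_{L^2(\widetilde\Sigma_k)}\le|\widetilde\Sigma_k|^{\sigma}\|\na u_0^\lm\|_{L^{2+\sigma'}(Q_1)}\le C2^{-k\sigma}\|\na u_0^\lm\|_{L^{2+\sigma'}(Q_1)}$, and summing the geometric series $\sum_k 2^{2k(1-\sigma)}$ over $1\lesssim k\lesssim\log(1/\de)$, one obtains the displayed bound for $\na^2 u_0^\lm$; the bound for $\pa_t u_0^\lm$ follows from the equation for $u_0^\lm$ and $\|\na_x\widehat{A^\lm}\|_\infty\le C\|\na_x A^\lm\|_\infty$ from \eqref{lecore3'}. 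Keeping the constants in the $W^{2,1}_2$ and Gehring steps free of $\lm$ and of the Lipschitz seminorms of $A$ is exactly the place where the above smallness reduction is used, and is the most delicate bookkeeping of the proof.

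Finally, substituting the three estimates above into \eqref{leap-2re} and into the bounds for the correction terms from the first step, and collecting the powers of $\de$, $\|\na_x A^\lm\|_\infty$ and $\|\pa_t A^\lm\|_\infty$ (using $\de<1$, hence $\de\le\de^{\sigma}$ and $\de^2\le\de$, and $(1+\lm)\va^2\le\de^2$), each term is absorbed into $C(\de^{\sigma}+\de\|\na_x A^\lm\|_\infty+\de^2\|\pa_t A^\lm\|_\infty)(\de\|\na_x A^\lm\|_\infty+1)$ times $(\fint_{Q_2}|u_\va^\lm|^2)^{1/2}+(\fint_{Q_2}|F|^2)^{1/2}$, which is \eqref{leap-3re}. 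The main obstacle I anticipate is precisely the interior second-order estimate of the third step — extracting the improved rate $\de^{\sigma-1}$ (rather than the naive $\de^{-1}$) from the global higher integrability of $\na u_0^\lm$ while keeping all constants independent of $\lm$ and of the coefficient regularity; the rest is an assembly of Lemmas~\ref{le2.0} and~\ref{leap-2} with classical energy, Caccioppoli and Calder\'{o}n--Zygmund bounds.
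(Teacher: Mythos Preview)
Your proposal is correct and follows essentially the same strategy as the paper: Poincar\'e for $w_\va^\lm$, Meyers higher integrability for $\na u_0^\lm$ to gain the power $\de^\sigma$ in the boundary shell, and an interior $H^2$ estimate for $u_0^\lm$ combined with that higher integrability to extract the rate $\de^{\sigma-1}$ from the second-order terms (your dyadic-shell summation is exactly the paper's weighted integral $\int_{Q_{1-\de}}|\na u_0^\lm|^2/\text{dist}_p^2$ in disguise). The only organizational difference is that the paper keeps the $\|\na_x A^\lm\|_\infty$ contribution explicit in the $H^2$ step rather than invoking your smallness reduction, which is precisely how the factor $(\de\|\na_x A^\lm\|_\infty+1)$ arises in \eqref{leap-3re}.
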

\begin{proof}
 Note that
\begin{align*}
(\pa_t  +\mathcal{L}^\lm_0 ) (u^\lm_0  -u^\lm_\va )=(\mathcal{L}^\lm_\va-\mathcal{L}^\lm_0) u^\lm_\va   \quad \text{in } Q_1 \quad\,\text{ and }\,\quad u^\lm_0  -u^\lm_\va =0 \quad\text{on } \pa_p Q_1.
 \end{align*}
By the $W^{1,p}$ estimate of parabolic systems (see e.g., \cite{bw2005}), we have for any $q\geq 2,$
 \begin{align}\label{prleapl-2}
 \fint_{Q_1} |\na u^\lm_0|^q \leq C  \fint_{Q_1} |\na u^\lm_\va |^q,
 \end{align}
where $C$ depends only on $d $ and $\mu.$ Moreover, by Meyer's estimates of parabolic systems (see e.g. \cite{Armstrongapde-2018}), there exists some $q>2$ such that
\begin{align*}
\Big(\fint_{Q_1} |\na u^\lm_\va |^q \Big)^{2/q}\leq C \Big\{\fint_{Q_{3/2}} |\na u^\lm_\va |^2   +  \fint_{Q_{3/2}} |F|^2  \Big\}.
\end{align*}
This, combined with \eqref{prleapl-2} and Caccioppoli's inequality, gives
\begin{align}\label{prleapl-3}
\begin{split}
\Big(\fint_{Q_1} |\na u^\lm_0  |^q \Big)^{2/q}&\leq C\Big\{ \fint_{Q_{3/2}} |\na u^\lm_\va |^2   +  \fint_{Q_{3/2}} |F|^2  \Big\}\\
&\leq C\Big\{ \fint_{Q_{2}} | u^\lm_\va |^2  +  \fint_{Q_{2}} |F|^2  \Big\}
\end{split}
\end{align} for some $q>2$, where $C $ depends only on $d$ and $\mu$.

To move on, let $w_\va^\lm$ be defined as in \eqref{def-w} with $\de=(1+\sqrt{\lm})\va$. We also assume that $\de<1/20,$ for otherwise the estimate \eqref{leap-3re} is trivial.  Observe that
\begin{align}\label{prleapl-4}
\begin{split}
 &\big\|\na \big(u^\lm_\va -u^\lm_0  -\va S_\de ((\chi^\lm)^\va\na u^\lm_0  )\eta_\de \big)\big\|_{L^2(Q_1)} \\
&\leq  \|\na w^\lm_\va\|_{L^2(Q_1)}+  \va^2 \big\| \na \big( S_\de ( (\pa_{x_i }\mathfrak{B}^\lm_{i (d+1) j})^\varepsilon\pa_{x_j} u^\lm_0  )\eta_\de \big)\big\|_{L^2(Q_1)} \\
 &\quad+ \va^2 \big\| \na \big(  S_\de ( (\mathfrak{B}^\lm_{i (d+1) j})^\varepsilon\pa_{x_j} u^\lm_0  ) \pa_{x_i } \eta_\de \big)\big\|_{L^2(Q_1)}\\
&\quad+ \va^2\big\| \na \big( S_\de ( (\mathfrak{B}^\lm_{i (d+1) j})^\varepsilon \pa_{x_i } \pa_{x_j} u^\lm_0  )\eta_\de  \big)\big\|_{L^2(Q_1)}.
 \end{split}\end{align}
An inspection of the estimates $I_{10}$--$I_{12}$ shows that
\begin{align}  \label{prleapl-5}
\begin{split}
\va^{2}\| \na \big( S_\de ( (\pa_{x_i }\mathfrak{B}^\lm_{i (d+1) j})^\varepsilon\pa_{x_j} u^\lm_0  )\eta_\de \big)\|_{L^2(Q_1)}
 & \leq C\de \|\na_x A^\lm \|_{\infty} \|\na u^\lm_0  \|_{L^2(Q_1)},\\
  \va^{2} \| \na \big(  S_\de ( (\mathfrak{B}^\lm_{i (d+1) j})^\varepsilon\pa_{x_j} u^\lm_0  ) \pa_{x_i } \eta_\de \big)\|_{L^2(Q_1)}& \leq  C (1+\de \|\na_x A^\lm \|_{\infty})  \|\na u^\lm_0  \|_{L^2(Q_1\setminus Q_{1-5\de})},
 \end{split}
 \end{align}
 and \begin{align}\label{prleapl-5'}
 \begin{split}
 &\va^{2}\| \na \big( S_\de ( (\mathfrak{B}^\lm_{i (d+1) j})^\varepsilon \pa_{x_i } \pa_{x_j} u^\lm_0  )\eta_\de  \big)\|_{L^2(Q_1)}\\
 &\leq C \big(1+ \de\|\na_x A^\lm \|_{\infty}\big)  \|\na u^\lm_0  \|_{L^2(Q_1\setminus Q_{1-5\de})}+C \de  \|\na^2 u^\lm_0  \|_{L^2( Q_{1-2\de})},
 \end{split}
 \end{align}
 which,  together with \eqref{leap-2re}, \eqref{prleapl-4} and \eqref{prleapl-5}, gives
 \begin{align}\label{prleapl-6}
 \begin{split}
 &\big\|\na \big(u^\lm_\va -u^\lm_0  -\va S_\de ((\chi^\lm)^\va\na u^\lm_0  )\eta_\de \big)\big\|_{L^2(Q_1)} \\
&\leq C\big(\de\|\na_x A^\lm \|_{\infty} +1 \big)  \|\na u^\lm_0  \|_{L^2(Q_1\setminus Q_{1-5\de})}\\
 &\quad+C \de\big(\|\na_x A^\lm \|_{\infty}\!+\!\de \|\pa_t A^\lm \|_{\infty}\big)\|\na u^\lm_0  \|_{L^2(Q_1)}  \\
&\quad+ C\de\big (\de\|\na_x A^\lm \|_{\infty} +1 \big)
  \big \{\|\na^2 u^\lm_0  \|_{L^2(Q_{1-2\de})} +\|\pa_t u^\lm_0  \|_{L^2(Q_{1-2\de})}\big\}.
\end{split}
\end{align}

To bound the right hand side of \eqref{prleapl-6}, we first note that \eqref{prleapl-3} and H\"{o}lder's inequality imply that
\begin{align}\label{prleapl-10}
\begin{split}
 &\int_{Q_1\setminus Q_{1-5\de}}  |\na u^\lm_0   |^2   \leq C \de^{1-\frac{2}{q}} \Big\{ \fint_{Q_{2}} | u^\lm_\va |^2  +  \fint_{Q_{2}} |F|^2  \Big\},\\
 &\int_{Q_1}  |\na u^\lm_0   |^2    \leq C\Big\{ \fint_{Q_{2}} | u^\lm_\va |^2  +  \fint_{Q_{2}} |F|^2  \Big\}.
 \end{split}
\end{align}
On the other hand, thanks to the interior estimates of parabolic systems with Lipschitz coefficients, one has
\begin{align}
\begin{split}
&\fint_{Q_r(x,t)} (|\pa_t u^\lm_0  |^2+|\na^2 u^\lm_0  |^2)\nonumber \\
&\leq    C(\|\na_x A^\lm \|^2_{\infty}+4r^{-2})\fint_{Q_{2r}(x,t)}  |\na u^\lm_0  |^2  +C\fint_{Q_{2r}(x,t)}  |F|^2
\end{split}
\end{align} for any ${Q_{2r}(x,t)} \subseteq Q_2.$
By integration, this implies that
\begin{align} \label{prleapl-11}
\begin{split}
\int_{ Q_{1-2\de}} \big(|\na^2 u^\lm_0  |^2 +|\pa_t u^\lm_0  |^2\big)&\leq C  \|\na_x A^\lm \|^2_{\infty} \int_{Q_1}  |\na u^\lm_0   |^2+C \int_{Q_1} |F|^2 \\
&\quad+ C \int_{Q_{1-\de}} \frac{|\na u^\lm_0  (y,s)|^2dyds}{|dist_p((y,s),\pa_p Q_1)|^2} ,
\end{split}
\end{align}
where $dist_p((y,s),\pa_p Q_1)$ denotes the parabolic distance from $(y,s)$ to the parabolic boundary $\pa_p Q_1$.  By H\"{o}lder's inequality and \eqref{prleapl-3}, we  deduce from \eqref{prleapl-11} that
\begin{align} \label{prleapl-12}
\begin{split}
\int_{ Q_{1-2\de}} \big(|\na^2 u^\lm_0  |^2 +|\pa_t u^\lm_0  |^2\big)
 \leq  C \big(\de^{-1-\frac{2}{q}}+\|\na_x A^\lm \|^2_{\infty} +1\big) \Big\{\int_{Q_2}  | u^\lm_\va  |^2  +\int_{Q_2} |F|^2\Big\}.
\end{split}
\end{align}
By taking \eqref{prleapl-10} and \eqref{prleapl-12} into \eqref{prleapl-6},  we get
\begin{align}  \label{prleapl-13}
 \begin{split}
&\Big(\int_{Q_1}  \big| \na\big( u^\lm_\va -u^\lm_0  -\va S_\de ((\chi^\lm)^\va\na u^\lm_0  )\eta_\de  \big) \big|^2 \Big )^{1/2}\\
 &\leq  C \big(\de^{\frac{1}{2}-\frac{1}{q}} \!+\! \de \|\na_x A^\lm \|_{\infty}+\de^2 \|\pa_tA^\lm \|_\infty \big) \big(\de \|\na_x A^\lm \|_{\infty}\! +\!1 \big)\\
 & \quad\times \Big\{ \Big (\int_{Q_2}  | u^\lm_\va   |^2  \Big )^{1/2} + \Big (\int_{Q_2}  |F |^2 \Big )^{1/2} \Big\}.
 \end{split}
 \end{align}
Finally, note that by Lemma \ref{le2.0} and the second inequality in \eqref{prleapl-10},
 \begin{align*}
 \int_{Q_1} |\va S_\de ((\chi^\lm)^\va\na u^\lm_0  )\eta_\de |^2\leq C\va \int_{Q_1}  |\na u^\lm_0  |^2\leq C \va \Big\{\int_{Q_{2}} | u^\lm_\va |^2  +   \int_{Q_{2}} |F|^2 \Big\}.
 \end{align*}
We then derive \eqref{leap-3re} with $\sigma=\frac{1}{2}-\frac{1}{q}>0$ from \eqref{prleapl-13} and Poincar\'{e}'s inequality immediately.
\end{proof}

\begin{theorem}\label{thmap}
Assume that $A  $ satisfies conditions \eqref{cod1}, \eqref{cod2} and \eqref{cod3} for some $\theta\in (0,1]$ and $L>0.$ Let $\de=(1+\sqrt{\lm})\va$ and $u^\lm_\va$ be a solution to $\pa_t u^\lm_\va+\mathcal{L}^\lm_\va u^\lm_\va=F$ in $Q_{2r}$, where  $F \in L^2(Q_{2r})$ and $\de\leq r\leq 1$.  Then there exist a solution to
$ \pa_t u^\lm_0 +\mathcal{L}_0^\lm u^\lm_0  =F  $  in $ Q_{r}$,
such that
 \begin{align}\label{thmapre2}
 \begin{split}
 &\Big(\fint_{Q_{r }}  |  u^\lm_\va- u^\lm_0 |^2  \Big )^{1/2}
 \leq  C \big\{( \de r^{-1}  )^\sigma +\de^\theta L\big\} \left\{ \Big (\fint_{Q_{2r}}  | u^\lm_\va  |^2  \Big )^{1/2} + r^2\Big (\fint_{Q_{2r}}  |F |^2\Big )^{1/2} \right\},
 \end{split}
 \end{align}
where $0<\sigma<1$ and $C$ depend only on $d$ and $\mu$.
\end{theorem}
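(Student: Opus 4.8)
The plan is to deduce \eqref{thmapre2} from Lemma~\ref{leap-3} by a parabolic rescaling together with a mollification of $A$ in the slow variables $(x,t)$, which converts the merely H\"older coefficient into a Lipschitz one. First I would rescale: putting $v(x,t)=u^\lm_\va(rx,r^2t)$, $\tilde F(x,t)=r^2F(rx,r^2t)$ and $\tilde A(x,t,y,s)=A(rx,r^2t,y,s)$, one checks that $v$ solves $\pa_tv-\text{div}\big(\tilde A^\lm(x,t,x/\tilde\va,t/\tilde\va^2)\na v\big)=\tilde F$ in $Q_2$ with $\tilde\va=\va/r$ and $\tilde A^\lm(x,t,y,s)=\tilde A(x,t,y,s/\lm)$; that $\tilde A$ satisfies \eqref{cod1}, \eqref{cod2} with the same constants and \eqref{cod3} with the same $\theta$ and with $L$ replaced by $Lr^\theta$; and, since the cell problem \eqref{chi} acts only on $(y,s)$, that the homogenized matrix is $\widehat{\tilde A^\lm}(x,t)=\widehat{A^\lm}(rx,r^2t)$. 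As $\tilde\de:=(1+\sqrt\lm)\tilde\va=\de/r$ and $\tilde\de^{\,\theta}(Lr^\theta)=\de^\theta L$, the estimate \eqref{thmapre2} is precisely its own $r=1$ case applied to $v,\tilde F,\tilde A$. Thus it suffices to prove that if $A$ satisfies \eqref{cod1}--\eqref{cod3} and $u^\lm_\va$ solves \eqref{ap-eq-va} in $Q_2$ with $\de=(1+\sqrt\lm)\va$, then there is $u^\lm_0$ solving $\pa_tu^\lm_0+\mc L^\lm_0u^\lm_0=F$ in $Q_1$ with
\[
\Big(\fint_{Q_1}|u^\lm_\va-u^\lm_0|^2\Big)^{1/2}\le C(\de^\sigma+\de^\theta L)\Big\{\Big(\fint_{Q_2}|u^\lm_\va|^2\Big)^{1/2}+\Big(\fint_{Q_2}|F|^2\Big)^{1/2}\Big\},
\]
and here one may also assume $\de\le c_0$ and $\de^\theta L\le1$ for a small dimensional $c_0$, since otherwise the right-hand side already dominates the left by Caccioppoli's and Poincar\'e's inequalities.

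Next, since Lemma~\ref{leap-3} requires \eqref{ainfty}, which fails for H\"older $A$, I would mollify: let $A_\de$ be the mollification of $A$ in $(x,t)$ at the parabolic scale $\de$. Then $A_\de$ still obeys \eqref{cod1}--\eqref{cod3}, and \eqref{cod3} yields $\|A-A_\de\|_\infty\le CL\de^\theta$, $\|\na_xA_\de\|_\infty\le CL\de^{\theta-1}$, $\|\pa_tA_\de\|_\infty\le CL\de^{\theta-2}$; comparing the cell problems \eqref{chi} for $A^\lm$ and $A^\lm_\de$ (the corrector difference solves a cell equation with datum $\text{div}_y((A^\lm-A^\lm_\de)(\na_y\chi^\lm_\de+I))$) and using \eqref{lecore0} gives in addition $\|\widehat{A^\lm}-\widehat{A^\lm_\de}\|_\infty\le CL\de^\theta$. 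I would then split $u^\lm_\va-u^\lm_0=(u^\lm_\va-v^\lm_\va)+(v^\lm_\va-v^\lm_0)+(v^\lm_0-u^\lm_0)$, where $v^\lm_\va$ solves $\pa_tv^\lm_\va-\text{div}((A^\lm_\de)^\va\na v^\lm_\va)=F$ in $Q_{3/2}$ with $v^\lm_\va=u^\lm_\va$ on $\pa_pQ_{3/2}$, where $v^\lm_0$ is the homogenized solution produced by Lemma~\ref{leap-3} applied to $v^\lm_\va$, i.e. $\pa_tv^\lm_0-\text{div}(\widehat{A^\lm_\de}\na v^\lm_0)=F$ in $Q_1$ with $v^\lm_0=v^\lm_\va$ on $\pa_pQ_1$, and where $u^\lm_0$ solves $\pa_tu^\lm_0+\mc L^\lm_0u^\lm_0=F$ in $Q_1$ with $u^\lm_0=v^\lm_0$ on $\pa_pQ_1$ (an admissible choice, as the conclusion does not prescribe the boundary values of $u^\lm_0$).

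It then remains to estimate the three pieces. The first difference vanishes on $\pa_pQ_{3/2}$ and satisfies $\pa_t(u^\lm_\va-v^\lm_\va)-\text{div}((A^\lm_\de)^\va\na(u^\lm_\va-v^\lm_\va))=\text{div}\big(((A^\lm)^\va-(A^\lm_\de)^\va)\na u^\lm_\va\big)$, so the energy estimate with the interior Caccioppoli bound $\|\na u^\lm_\va\|_{L^2(Q_{3/2})}\le C\{\|u^\lm_\va\|_{L^2(Q_2)}+\|F\|_{L^2(Q_2)}\}$ (which uses only \eqref{cod1}) and Poincar\'e's inequality give $\|u^\lm_\va-v^\lm_\va\|_{L^2(Q_{3/2})}\le CL\de^\theta\{\|u^\lm_\va\|_{L^2(Q_2)}+\|F\|_{L^2(Q_2)}\}$. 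The middle difference is controlled by Lemma~\ref{leap-3} applied to $v^\lm_\va$ (its coefficient $A_\de$ being Lipschitz), in the version for the pair $(Q_{3/2},Q_1)$ in place of $(Q_2,Q_1)$, which follows from the same argument with the auxiliary radii adjusted; because $\de\|\na_xA_\de\|_\infty\le CL\de^\theta$, $\de^2\|\pa_tA_\de\|_\infty\le CL\de^\theta$ and $L\de^\theta\le1$, its bound collapses to $C(\de^\sigma+L\de^\theta)\{\|v^\lm_\va\|_{L^2(Q_{3/2})}+\|F\|_{L^2(Q_{3/2})}\}$, and $\|v^\lm_\va\|_{L^2(Q_{3/2})}\le C\{\|u^\lm_\va\|_{L^2(Q_2)}+\|F\|_{L^2(Q_2)}\}$ by the first step. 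The last difference vanishes on $\pa_pQ_1$ and satisfies $\pa_t(v^\lm_0-u^\lm_0)+\mc L^\lm_0(v^\lm_0-u^\lm_0)=\text{div}\big((\widehat{A^\lm_\de}-\widehat{A^\lm})\na v^\lm_0\big)$, so the energy estimate bounds it by $CL\de^\theta\|\na v^\lm_0\|_{L^2(Q_1)}$, and $\|\na v^\lm_0\|_{L^2(Q_1)}\le C\{\|u^\lm_\va\|_{L^2(Q_2)}+\|F\|_{L^2(Q_2)}\}$ by the interior gradient estimate \eqref{prleapl-3}, whose proof applies verbatim to $v^\lm_0$ with constants depending only on $d,\mu$. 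Adding the three bounds and undoing the rescaling yields \eqref{thmapre2}, with $\sigma$ the Meyers exponent from Lemma~\ref{lemma-2.2}.

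The main obstacle is the passage from H\"older to Lipschitz coefficients: Lemma~\ref{leap-3}, and with it the two-scale machinery of Sections~2--3, is built for coefficients satisfying \eqref{ainfty}, whereas the full-scale estimate is required under the mere H\"older condition \eqref{cod3}. The delicate point is the choice of the mollification scale: replacing $A$ by $A_\eta$ costs $\sim L\eta^\theta$ both in the oscillating equation and, through the correctors, in the homogenized equation, while feeding the blown-up norms $\|\na_xA_\eta\|_\infty\sim L\eta^{\theta-1}$, $\|\pa_tA_\eta\|_\infty\sim L\eta^{\theta-2}$ into Lemma~\ref{leap-3} contributes $\sim L\de\eta^{\theta-1}+L\de^2\eta^{\theta-2}$; these compete, are balanced at $\eta\sim\de$, and there the combined contribution is exactly of order $L\de^\theta$, matching the claimed rate. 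The only remaining, purely bookkeeping, subtlety is that Caccioppoli needs a slightly larger cube than the one on which solutions are compared, which is why the first comparison is run on $Q_{3/2}$ and Lemma~\ref{leap-3} is invoked for the pair $(Q_{3/2},Q_1)$ rather than $(Q_2,Q_1)$.
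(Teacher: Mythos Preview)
Your proposal is correct and follows essentially the same route as the paper: mollify $A$ in the slow variables at the parabolic scale $\de$, compare $u^\lm_\va$ with the solution $v^\lm_\va$ of the mollified equation on an intermediate cube, apply Lemma~\ref{leap-3} to $v^\lm_\va$, and then pass from $\widehat{A^\lm_\de}$ to $\widehat{A^\lm}$ by a further energy comparison; the general $r$ follows by parabolic rescaling. The only cosmetic difference is that the paper performs the rescaling last and inserts one more nested cube ($Q_{3/2}\supset Q_{5/4}\supset Q_1$), using Caccioppoli for $v^\lm_0$ on $Q_{5/4}$ to control $\|\na v^\lm_0\|_{L^2(Q_1)}$, whereas you work directly with the pair $(Q_{3/2},Q_1)$ and bound $\|\na v^\lm_0\|_{L^2(Q_1)}$ via the $W^{1,2}$ comparison \eqref{prleapl-2}; both are fine.
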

\begin{proof}
It suffices to consider the case $\de^\theta L < 1$, for otherwise the inequality is trivial.  We first consider the case $r=1.$
Let $S_\de $ be the smoothing operator defined as in \eqref{sva}. Denote $S_\de ((A^\lm)^\va)(x, t, x/\va, t/\va^2)$  as $ \widetilde{A^\lm } (x,t,x/\va,t/\va^2)$.
It is not difficult to see that
\begin{align}\label{leap4-1}
\|A^\lm - \widetilde{A^\lm } \|_{\infty}\leq C L \de^\theta, \quad \|\na_x \widetilde{A^\lm } \|_{\infty}\leq C L \de^{\theta-1} \quad\text{and}\quad \|\pa_t \widetilde{A^\lm } \|_{\infty}\leq C L \de^{ \theta-2}.
\end{align}
Let $v^\lm_\va$ be the weak solution to
\begin{align}\label{leap4-2}
\pa_t v^\lm_\va -\text{div} ( \widetilde{A^\lm } (x, t, x/\va, t/\va^2) \na v^\lm_\va)=F \quad \text{ in } Q_{3/2}
\end{align}
with $v^\lm_\va=u^\lm_\va$ on $\pa_p Q_{3/2}$. Note that
\begin{align*}
&\pa_t (u_\va^\lm-v_\va^\lm)-\text{div} \left\{ \widetilde{A^\lm } (x, t, x/\va, t/\va^2)\na (u_\va^\lm-  v^\lm_\va) \right\}\\
&= \text{div} \left\{[A^\lm (x, t, x/\va, t/\va^2)- \widetilde{A^\lm } (x, t, x/\va, t/\va^2) ] \na u_\va^\lm \right\}
\end{align*}
in $ Q_{3/2}.$ By Poincar\'{e}'s inequality  and standard energy estimates,
\begin{align}\label{leap4-3}
\begin{split}
 \int_{Q_{3/2}}| v^\lm_\va-u^\lm_\va |^2  &\leq C \int_{Q_{3/2}}|\na (v^\lm_\va-u^\lm_\va)|^2  \leq C  L^2 \de^{2\theta}   \int_{Q_{3/2}} |\na u^\lm_\va|^2 \\
&\leq C   L^2 \de^{2\theta}  \Big\{  \int_{Q_{2}} | u^\lm_\va|^2  +  \int_{Q_{2}} | F|^2 \Big\},
\end{split}
\end{align}
where Caccioppoli's inequality has been used for the last step.
Let $\pa_t-\text{div}(\widehat{ \widetilde{A^\lm } }(x,t ) \na )$ be the homogenized operator of $\pa_t - \text{div}( \widetilde{A^\lm } (x,t,x/\va,t/\va^2) \na )$, and  $v_0$ the solution to
\begin{align}
\pa_tv^\lm_0-\text{div}(\widehat{ \widetilde{A^\lm } }(x,t ) \na v^\lm_0 )=F \,\,\,\text{ in } Q_{5/4},\quad \quad v^\lm_0=v^\lm_\va \,\,\,\text{ on } \pa_p Q_{5/4}.
\nonumber\end{align}
 By Lemma \ref{leap-3}, \eqref{leap4-1} and \eqref{leap4-3},
 \begin{align}\label{leap4-4}
 \begin{split}
  \Big(\int_{Q_{5/4}}  | v^\lm_\va-v^\lm_0 |^2 \Big )^{1/2}
 &\leq C \big(\de^\sigma +\de^{\theta}L \big)
  \Big\{ \Big (\int_{Q_{3/2}}  | v^\lm_\va  |^2  \Big )^{1/2} + \Big (\int_{Q_{3/2}}  |F |^2 \Big )^{1/2} \Big\}\\
  &\leq C \big(\de^\sigma +\de^{\theta}L \big)
  \Big\{ \Big (\int_{Q_2}  | u^\lm_\va  |^2  \Big )^{1/2} + \Big (\int_{Q_2}  |F |^2 \Big )^{1/2} \Big\}.
 \end{split}
 \end{align}

Let $u^\lm_0$ be the solution to
\begin{align*}
\pa_tu^\lm_0-\text{div}(\widehat{A^\lm }(x,t) \na u^\lm_0 )=F \,\,\,\text{ in } Q_{1},\quad \quad u^\lm_0=v^\lm_0 \,\,\,\text{ on } \pa_p Q_{1}.
\end{align*}
 Since $\|A^\lm - \widetilde{A^\lm } \|_{\infty}\leq C L \de^\theta,$ similar to \eqref{lecore2} it is not difficult to verify that
$
\|\widehat{A^\lm }- \widehat{\widetilde{A^\lm }} \|_{\infty}\leq C L \de^\theta.
$
Therefore, as \eqref{leap4-3} by standard energy estimates and Caccioppoli's inequality, we deduce that
\begin{align}\label{leap4-6}
\begin{split}
 \int_{Q_1}| v^\lm_0-u^\lm_0 |^2 &\leq  C \int_{Q_1}|\na (v^\lm_0-u^\lm_0)|^2 \leq C L^2 \de^{2\theta}  \int_{Q_1} |\na v^\lm_0|^2
\\
& \leq C L^2 \de^{2\theta} \Big\{  \int_{Q_{5/4}} |v^\lm_0|^2  + \int_{Q_2} |F|^2 \Big\}\\
&\leq C L^2 \de^{2\theta}  \Big\{  \int_{Q_{2}} | u^\lm_\va|^2  +  \int_{Q_{2}} | F|^2\Big\},
\end{split}
\end{align} where  we have used  \eqref{leap4-4} for the last step.
In view of  \eqref{leap4-3}-\eqref{leap4-6}, we obtain \eqref{thmapre2} with $r=1$ immediately.

For general $\de\leq r\leq 1$, we set $\widetilde{u}_\va^\lm (x,t)=u^\lm_\va(rx,r^2t)$. Then $\widetilde{u}_\va^\lm $ is a solution to
$$\pa_t \widetilde{u}_\va^\lm -\text{div}(\mathcal{A^\lm }(x,t,x/{\va'},t/{\va'}^2)\na \widetilde{u}_\va^\lm )=\widetilde{F} \quad\text{in}\, \, Q_{2},$$ where $\mathcal{A^\lm }(x,t,y,s)=A^\lm (rx,r^2t,y, s), {\va'}=\va/r$ and $\widetilde{F}(x,t)=r^2 F(rx,r^2t).$ Observe that
\begin{align*}
\big|\mathcal{A^\lm }(x,t,y,s)- \mathcal{A^\lm }(x',t',y,s)\big|\leq r^\theta L\big\{|x-x'|+ |t-t'|^{1/2} \big\}^\theta.
\end{align*}
Therefore, there exists a solution $\widetilde{u}^\lm_0$ to $\pa_t \widetilde{u}^\lm_0-\text{div}(\widehat{\mathcal{A^\lm }}(x,t) \na \widetilde{u}^\lm_0)=\widetilde{F}$ in $Q_1$ such that
 \begin{align}\label{thmap-2}
 \begin{split}
  &\Big(\fint_{Q_1}  | \widetilde{u}_\va^\lm-\widetilde{u}^\lm_0  |^2 \Big )^{1/2}\\
 &\leq  C \big( (\de/r)^\sigma +  \de^\theta L \big)
  \left\{ \Big (\fint_{Q_2}  |  \widetilde{u}_\va^\lm  |^2  \Big )^{1/2} + \Big (\fint_{Q_2}  |\widetilde{F} |^2 \Big )^{1/2} \right\}.
 \end{split}
 \end{align}
Setting $u^\lm_0(x,t)=\widetilde{u}^\lm_0(x/r,t/r^2)$, then $u^\lm_0$ is a solution to  $ \pa_t u^\lm_0 +\mathcal{L}^\lm_0 u^\lm_0  =F  $  in $ Q_{r}$. By rescaling, one obtains  \eqref{thmapre2}  from \eqref{thmap-2} immediately. The proof is complete.
\end{proof}

\section{Interior Lipschitz estimates for $\pa_t+\mathcal{L}^\lm_\va$ }

This part is devoted to the uniform interior Lipschitz estimate for the operator  $\pa_t+\mathcal{L}^\lm_\va$ defined in \eqref{Llm}, i.e.,
$$\pa_t+\mathcal{L}^\lm_\va=\pa_t -\text{div}(A^\lm(x,t,x/\va,t/\va^2)\na).$$  Throughout the section, we always assume that  $A(x,t,y,s)$ satisfies \eqref{cod1}, \eqref{cod2} and \eqref{cod3} for some $\theta\in (0,1]$ and $L\geq 0$. Note that thanks to Lemma \ref{le-co}  the coefficient matrix  $\widehat{A^\lm}(x,t)$ of the homogenized operator $\pa_t +\mathcal{L}^\lm_0$ also satisfies \eqref{cod3}.

\begin{lemma}\label{inliple-1}
Let $u^\lm_0$ be a weak solution to $ \pa_t u^\lm_0+\mathcal{L}_0^\lm u^\lm_0 =F $ in $Q_r$, where $0<r\leq 1$, $F\in L^p(Q_r)$ for some $p>d+2$.
Define
\begin{align}\label{gru}
G(r;u^\lm_0)= \frac{1}{r}  \inf_{P\in \mathcal{P}}\Big\{\Big(\fint_{Q_r}|u^\lm_0-P|^2\Big)^{1/2}+r^{1+\nu}|\na  P|+ r^2 \Big(\fint_{Q_r}|F|^p \Big)^{1/p}\Big\},
\end{align}
where $\nu=\min\{\theta, 1-(d+2)/p\}$, and $\mathcal{P}$ denotes the set of linear functions  $E  x+b$ with $E\in \R^{d}$ and $b\in \R^1.$  Then
there exists $\zeta \in (0,1/4)$, depending only on $d$, $\mu$, $p$, and $(\theta,L)$ in \eqref{cod3}, such that
\begin{align}\label{inliple1re}
G(\zeta r;u^\lm_0)\leq \frac{1}{2} G(r;u^\lm_0).
\end{align}
\end{lemma}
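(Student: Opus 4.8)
The plan is to prove \eqref{inliple1re} by a standard $C^{1+\nu}$-improvement argument for the homogenized (locally periodic, but $\varepsilon$-free) operator $\pa_t+\mathcal{L}^\lm_0 = \pa_t-\mathrm{div}(\widehat{A^\lm}(x,t)\na)$, whose coefficients satisfy \eqref{cod3} uniformly in $\lm$ by Lemma \ref{le-co}. First I would reduce to $r=1$ by the natural parabolic rescaling $\tilde u(x,t)=u^\lm_0(rx,r^2t)$, noting that this leaves the class of operators invariant (the rescaled coefficient $\widehat{A^\lm}(rx,r^2t)$ still obeys \eqref{cod3} with constant $r^\theta L\le L$), scales $F$ to $r^2F(rx,r^2t)\in L^p$, and — crucially, since $\nu\le\theta$ and $\nu\le 1-(d+2)/p$ — the quantity $G(r;u^\lm_0)$ is exactly scale-compatible, so it suffices to show $G(\zeta;u^\lm_0)\le\frac12 G(1;u^\lm_0)$.

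Next, I would invoke the interior $C^{1+\nu}$ (Schauder) estimate for parabolic equations with Hölder coefficients and $L^p$ right-hand side, $p>d+2$: if $\pa_t v-\mathrm{div}(\widehat{A^\lm}\na v)=0$ in $Q_{1/2}$ then $v\in C^{1+\nu}(Q_{1/4})$ with
\[
\|v\|_{C^{1+\nu}(Q_{1/4})}\le C\Big(\fint_{Q_{1/2}}|v|^2\Big)^{1/2},
\]
$C$ depending only on $d,\mu,p,(\theta,L)$; and similarly the inhomogeneous solvability estimate giving, for the solution $w$ of $\pa_t w-\mathrm{div}(\widehat{A^\lm}\na w)=F$ in $Q_{1/2}$ with $w=0$ on $\pa_p Q_{1/2}$,
\[
\Big(\fint_{Q_{1/2}}|w|^2\Big)^{1/2}+\sup_{Q_{1/4}}|\na w|\le C\Big(\fint_{Q_{1/2}}|F|^p\Big)^{1/p}.
\]
Write $u^\lm_0=w+v$ on $Q_{1/2}$. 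Then for any $\rho\in(0,1/4)$, using the first-order Taylor polynomial $P(x)=v(0,0)+\na v(0,0)\cdot x$ of $v$ at the origin as a competitor in the infimum defining $G(\rho;u^\lm_0)$, the $C^{1+\nu}$ bound for $v$ gives $\big(\fint_{Q_\rho}|v-P|^2\big)^{1/2}\le C\rho^{1+\nu}\|v\|_{C^{1+\nu}(Q_{1/4})}$, while $w$ and the $F$-term are controlled by the second estimate; adding a harmless $P\mapsto P$ comparison and absorbing constants yields
\[
G(\rho;u^\lm_0)\le C_0\big(\rho^{\nu}+\rho^{-1}\cdot\text{[lower-order in }\rho\text{]}\big)\,G(1;u^\lm_0),
\]
more precisely $G(\rho;u^\lm_0)\le C_0\rho^{\nu}G(1;u^\lm_0)$ after choosing the competitor polynomial for $G(1;\cdot)$ optimally and discarding it appropriately. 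Then pick $\zeta\in(0,1/4)$ with $C_0\zeta^{\nu}\le\frac12$.

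The bookkeeping subtlety — and the step I expect to need the most care — is that $G$ is defined with an infimum over affine functions, so one must subtract the optimal (or a near-optimal) affine function $P_0$ for scale $1$ at the outset: replacing $u^\lm_0$ by $u^\lm_0-P_0$ changes $F$ not at all (affine functions are annihilated by $\pa_t-\mathrm{div}(\widehat{A^\lm}\na\cdot)$ only up to the lower-order error $\mathrm{div}(\widehat{A^\lm}\na P_0)$, which is itself an $x,t$-Hölder-controlled term of the right order) and allows one to bound $\big(\fint_{Q_{1/2}}|u^\lm_0-P_0|^2\big)^{1/2}$ and hence the Schauder input by $G(1;u^\lm_0)$. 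Tracking that the extra drift term $\mathrm{div}(\widehat{A^\lm}\na P_0)$ — whose size is $\lesssim L|\na P_0|\lesssim L\,G(1;u^\lm_0)$ and which is genuinely only Hölder, not $L^p$ with a gain — can be folded into the right-hand side at cost $O(\rho^{\nu})$ is exactly where the choice $\nu=\min\{\theta,1-(d+2)/p\}$ is used, and is the technical heart of the argument; everything else is the routine Campanato-type iteration. I would then note that all constants depend only on the allowed parameters and are uniform in $\lm$ because the only property of $\widehat{A^\lm}$ used is \eqref{cod3} with constant $\le L$ together with the ellipticity \eqref{almecon}.
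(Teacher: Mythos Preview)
Your proposal is sound in spirit and uses the same core ingredients as the paper --- interior parabolic Schauder estimates for $\pa_t+\mathcal{L}^\lm_0$ (uniform in $\lm$ via Lemma~\ref{le-co}), a first-order Taylor competitor at the small scale, and explicit control of the extra term $\text{div}\big((\widehat{A^\lm}-\widehat{A^\lm}(0,0))\na P\big)$ coming from the affine subtraction, which is precisely where $\nu=\min\{\theta,1-(d+2)/p\}$ enters. The paper's argument differs from yours in two organizational points worth noting.

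First, the reduction to $r=1$ by rescaling does not work as you state it: under $\tilde u(x,t)=u^\lm_0(rx,r^2t)$ the three pieces of $G$ do not scale by a common factor, because the weight on $|\na P|$ is $r^{1+\nu}$ rather than $r$. One finds $rG(r;u^\lm_0)\le G(1;\tilde u)$ and $rG(\zeta r;u^\lm_0)\le G(\zeta;\tilde u)$, but the inequality $G(1;\tilde u)\le rG(r;u^\lm_0)$ needed to close the loop goes the wrong way. The paper avoids this by working directly at scale $r$ and quoting the Schauder estimates in their scale-invariant form (inequalities \eqref{prinliple-2}--\eqref{prinliple-3}), which is the simplest fix for your argument as well.

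Second, the paper does not split $u^\lm_0=w+v$. It takes as competitor at scale $\zeta r$ the Taylor polynomial $P_0=u^\lm_0(0,0)+\na u^\lm_0(0,0)\cdot x$ of $u^\lm_0$ itself, bounds $|\na u^\lm_0(0,0)|$ by the $C^{1,1/2}$ interior estimate applied to $u^\lm_0-P(0)$ for an arbitrary $P\in\mathcal P$, and bounds the H\"older seminorms of $\na(u^\lm_0-P)$ directly by Schauder applied to $u^\lm_0-P$ (whose equation has right-hand side $F+\text{div}((\widehat{A^\lm}-\widehat{A^\lm}(0,0))\na P)$). This is slightly more direct than your decomposition, since after splitting you would in any case need the $C^{1+\nu}$ regularity of $w$ (not just $\sup|\na w|$) to absorb its contribution at scale $\rho$ with the correct gain $\rho^\nu$; otherwise the constant part of $w$ costs you a factor $\rho^{-1}$. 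Your sketch does contain the right idea in the ``bookkeeping subtlety'' paragraph, so this is a matter of streamlining rather than a genuine gap.
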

\begin{proof}
Taking $P_0=\na u^\lm_0(0,0) x+u^\lm_0(0,0)$, we deduce that
\begin{align}\label{prinliple-1}
 G(\zeta r;u^\lm_0)% &\leq \frac{1}{\zeta r}\|u^\lm_0-P_0\|_{L^\infty(Q_{\zeta  r})}+\zeta r  \Big(\fint_{Q_{\zeta  r}}|F|^p \Big)^{1/p} +(\zeta r)^\nu |\na u^\lm_0(0,0)|\nonumber\\
 & \leq  (\zeta r)^{\nu}\left\{\|\na u^\lm_0\|_{C_x^{\nu}(Q_{\zeta r})}\!+\!\|u^\lm_0\|_{C_t^{(1+\nu)/2  }(Q_{\zeta r})} \right\} \! +\!\zeta r \Big(\fint_{Q_{\zeta r}}|F|^p \Big)^{1/p} \!+\!(\zeta r)^\nu  |\na u^\lm_0(0,0)|\nonumber\\
 &\leq  (\zeta r)^{\nu}\left\{\|\na (u^\lm_0-P)\|_{C_x^{\nu}(Q_{\zeta r})}+ \|u^\lm_0-P\|_{C_t^{(1+\nu)/2}(Q_{\zeta r})} \right\} \nonumber\\
 &\quad +r\zeta^{1-(d+2)/p} \Big(\fint_{Q_r}|F|^p \Big)^{1/p} +(\zeta r)^\nu |\na u^\lm_0(0,0)|,
\end{align}
where $\|\cdot\|_{C_t^{\al}(Q_{r})}(0<\al<1)$ is given by \eqref{htseminorm}, and $\|\cdot\|_{C_x^{\al}(Q_{r})}$ is defined as following,
\begin{align*}
&\|u\|_{C_x^\al(Q_r)}=\sup_{\substack{(x,t), (y,t)\in Q_r\\ x\neq y}} \frac{|u(x,t)-u(y,t)|}{|x-y|^\al}.
%&\,\,\,\|u\|_{C_t^\al(Q_r)}=\sup_{\substack{(x,t), (x,s)\in Q_r\\  t \neq  s} } \frac{|u(x,t)-u(x,s)|}{|t-s|^{\al}}.
\end{align*}
Since $\widehat{A^\lm}$ satisfies \eqref{cod3}. The interior $C^{1,1/2}$ estimate of $\pa_t +\mathcal{L}_0^\lm$ gives
\begin{align}\label{prinliple-2}
\begin{split}
 |\na u^\lm_0(0,0)|&\leq \frac{C}{r} \Big(\fint_{Q_r}|u^\lm_0-P(0)|^2\Big)^{1/2} +Cr \Big(\fint_{Q_r}|F|^p\Big)^{1/p}\\
% &\leq  \frac{C}{r} \Big(\fint_{Q_r}|u^\lm_0-P|^2\Big)^{1/2} +\frac{C}{r} \Big(\fint_{Q_r}|P-P(0)|^2\Big)^{1/2} +Cr \Big(\fint_{Q_r}|F|^p\Big)^{1/p}\\
  &\leq  \frac{C}{r} \Big(\fint_{Q_r}|u^\lm_0-P|^2\Big)^{1/2} +C|\na P| +Cr \Big(\fint_{Q_r}|F|^p\Big)^{1/p}.
 \end{split}
\end{align}
Furthermore,  since
$$ \pa_t(u^\lm_0-P)- \text{div} \big(\widehat{A^\lm}(x,t) \na (u^\lm_0-P)\big)=F+\text{div} \big( (\widehat{A^\lm}(x,t)- \widehat{A^\lm}(0,0)) \na P  \big) \quad \text{in }\, Q_r.$$
 Schauder estimates of $\pa_t +\mathcal{L}^\lm_0$ imply for $0<\zeta<1/2$,
\begin{align}\label{prinliple-3}
\begin{split}
&\|\na (u^\lm_0-P)\|_{C_x^{\nu}(Q_{\zeta r})}+\|u^\lm_0-P\|_{C_t^{(1+\nu)/2}(Q_{\zeta r})}\\
 &\leq \|\na (u^\lm_0-P)\|_{C^{\nu,\nu/2}(Q_{r/2})}+\|u^\lm_0-P\|_{C_t^{(1+\nu)/2}(Q_{r/2})}\\
 &\leq \frac{C}{r^{1+\nu}} \Big(\fint_{Q_r} |u^\lm_0-P|^2\Big)^{1/2}+ C r^{-\nu} \|(\widehat{A^\lm}-\widehat{A^\lm}(0,0)) \na P\|_{L^\infty(Q_r)}\\
 &\quad+ C \|(\widehat{A^\lm}-\widehat{A^\lm}(0,0)) \na P\|_{C^{\nu,\nu/2}(Q_r)}+Cr^{1-\nu}\Big(\fint_{Q_r}|F|^p\Big)^{1/p}\\
 &\leq  \frac{C}{r^{1+\nu}} \Big(\fint_{Q_r} |u^\lm_0-P|^2\Big)^{1/2} +C|\na P|+ Cr^{1-\nu}\Big(\fint_{Q_r}|F|^p\Big)^{1/p}.
 \end{split}
\end{align}
By taking \eqref{prinliple-2} and \eqref{prinliple-3} into  \eqref{prinliple-1}, we get \eqref{inliple1re} immediately for $\zeta$ small enough.
\end{proof}

\begin{lemma}\label{inliple-2}
Let $u^\lm_\va$ be a solution to $ \pa_t u^\lm_\va-\text{div}(A^\lm (x,t,x/\va,t/\va^2)\na u^\lm_\va)=F $ in $Q_1$ with $F\in L^p(Q_1), p> d+2$ and $0<\va<1.$
Let $\de=(1+\sqrt{\lm})\va$, and  $\zeta \in (0,1/4)$ be given by Lemma \ref{inliple-1}. Then for any $\de\leq r\leq 1$, we have
\begin{align}\label{inliple-2re}
G(\zeta r; u^\lm_\va)\leq \frac{1}{2} G(r; u^\lm_\va) +C  \Big(\frac{\de}{r} \Big)^\sigma     \left\{\frac{1}{r} \Big (\fint_{Q_{2r}}  | u^\lm_\va  |^2  \Big )^{1/2} + r\Big (\fint_{Q_{2r}}  |F |^p\Big )^{1/p} \right\}
\end{align}
for some $\sigma$ depending only on  $d$, $\mu$, and $\theta$ in \eqref{cod3}, where $C$ depends only on $d$, $\mu$, $p$, and $(\theta,L)$ in \eqref{cod3}.
\end{lemma}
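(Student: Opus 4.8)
The plan is to combine the Campanato-type decay for the homogenized equation from Lemma \ref{inliple-1} with the homogenization approximation from Theorem \ref{thmap}, exploiting the fact that the quantity $G(r;\cdot)$ in \eqref{gru} is almost subadditive. First I would record the elementary inequality: if $v$ and $w$ both carry the same right-hand side $F$, then picking a near-optimal linear function $P$ for $w$ in \eqref{gru} and using the triangle inequality in $L^2(Q_r)$ gives
\begin{align*}
G(r;v)\le G(r;w)+\frac{1}{r}\Big(\fint_{Q_r}|v-w|^2\Big)^{1/2}.
\end{align*}
Moreover, since $Q_{\zeta r}\subset Q_r$, one has $\big(\fint_{Q_{\zeta r}}|v-w|^2\big)^{1/2}\le \zeta^{-(d+2)/2}\big(\fint_{Q_r}|v-w|^2\big)^{1/2}$.

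Next, fixing $\de\le r\le 1$, I would apply Theorem \ref{thmap} on $Q_{2r}$: since $u^\lm_\va$ solves $\pa_t u^\lm_\va+\mathcal{L}^\lm_\va u^\lm_\va=F$ there, we obtain a solution $u^\lm_0$ to $\pa_t u^\lm_0+\mathcal{L}^\lm_0 u^\lm_0=F$ in $Q_r$ with
\begin{align*}
\Big(\fint_{Q_r}|u^\lm_\va-u^\lm_0|^2\Big)^{1/2}\le C\big\{(\de/r)^\sigma+\de^\theta L\big\}\Big\{\Big(\fint_{Q_{2r}}|u^\lm_\va|^2\Big)^{1/2}+r^2\Big(\fint_{Q_{2r}}|F|^2\Big)^{1/2}\Big\}.
\end{align*}
Because $\de\le r\le 1$ we have $\de^\theta L\le L(\de/r)^\theta$, and because $p\ge 2$ and $r\le 1$ we have $r^2(\fint_{Q_{2r}}|F|^2)^{1/2}\le r(\fint_{Q_{2r}}|F|^p)^{1/p}$; so after replacing $\sigma$ by $\min\{\sigma,\theta\}$ (which still depends only on $d,\mu,\theta$), the right-hand side is at most $C(\de/r)^\sigma\big\{(\fint_{Q_{2r}}|u^\lm_\va|^2)^{1/2}+r(\fint_{Q_{2r}}|F|^p)^{1/p}\big\}$.

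Then I would chain the three facts. Since $\widehat{A^\lm}$ satisfies \eqref{cod3} by Lemma \ref{le-co}, Lemma \ref{inliple-1} applies to the homogenized solution $u^\lm_0$ and gives $G(\zeta r;u^\lm_0)\le \frac12 G(r;u^\lm_0)$. Combining this with the two subadditivity inequalities above,
\begin{align*}
G(\zeta r;u^\lm_\va)
&\le G(\zeta r;u^\lm_0)+\frac{1}{\zeta r}\Big(\fint_{Q_{\zeta r}}|u^\lm_\va-u^\lm_0|^2\Big)^{1/2}\\
&\le \frac12 G(r;u^\lm_0)+\frac{C}{r}\Big(\fint_{Q_r}|u^\lm_\va-u^\lm_0|^2\Big)^{1/2}\\
&\le \frac12 G(r;u^\lm_\va)+\frac{C}{r}\Big(\fint_{Q_r}|u^\lm_\va-u^\lm_0|^2\Big)^{1/2},
\end{align*}
and inserting the estimate from Theorem \ref{thmap} yields exactly \eqref{inliple-2re}.

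The step requiring the most care is not a genuine obstacle but the bookkeeping of scales and exponents: one must verify that Theorem \ref{thmap} is applicable at radius $r$ (which is precisely the hypothesis $\de\le r\le 1$ of the lemma), that the homogenized solution it produces lives on $Q_r$ so that Lemma \ref{inliple-1} can be invoked there, and that the constant $C$ in the error term absorbs the factor $\zeta^{-(d+2)/2}$ (legitimate since $\zeta$ depends only on $d,\mu,p,\theta,L$). Everything beyond this is the formal manipulation of $G$, which is routine once the almost-subadditivity is established.
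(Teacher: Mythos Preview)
Your proposal is correct and follows essentially the same approach as the paper: the paper's proof is the identical three-line chain $G(\zeta r;u^\lm_\va)\le G(\zeta r;u^\lm_0)+\frac{C}{\zeta r}(\fint_{Q_r}|u^\lm_\va-u^\lm_0|^2)^{1/2}\le \frac12 G(r;u^\lm_0)+\cdots\le \frac12 G(r;u^\lm_\va)+\cdots$, followed by an appeal to Theorem~\ref{thmap}. Your write-up is in fact more careful than the paper's, as you make explicit the absorption of $\de^\theta L$ into $(\de/r)^{\min\{\sigma,\theta\}}$, the passage from the $L^2$ to the $L^p$ average of $F$, and the harmless dependence of the constant on $\zeta$.
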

\begin{proof}
The proof is the same as that of Lemma 8.4 in \cite{shenan2017}. By Lemma \ref{inliple-1},
\begin{align*}
G(\zeta r; u^\lm_\va)&\leq G(\zeta r; u^\lm_0) +\frac{C}{\zeta r} \Big(\fint_{Q_r} |u^\lm_\va-u^\lm_0|^2\Big)^{1/2}\\
&\leq \frac{1}{2} G(r; u^\lm_0) +\frac{C}{\zeta r} \Big(\fint_{Q_r} |u^\lm_\va-u^\lm_0|^2\Big)^{1/2}\\
&\leq  \frac{1}{2} G(r; u^\lm_\va) +\frac{C(\zeta^{-1}+1)}{r} \Big(\fint_{Q_r} |u^\lm_\va-u^\lm_0|^2\Big)^{1/2},
\end{align*}
 from which and  Theorem \ref{thmap}, we obtain \eqref{inliple-2re} immediately.
\end{proof}

Based on Lemmas \ref{inliple-1} and \ref{inliple-2}, we give the interior Lipschitz estimate uniform down to the scale $ (1+\sqrt{\lm})\va$ for the operator $\pa_t+\mathcal{L}^\lm_\va$.
\begin{theorem}\label{thmllip}
 Let $u^\lm_\va $ be a solution to $ \pa_t u^\lm_\va -\text{div}(A^\lm(x,t,x/\va,t/\va^2)\na u^\lm_\va )=F $ in $Q_1$ with $F\in L^p(Q_1), p> d+2$. Then for any $ (1+\sqrt{\lm}) \va \leq r<1$, we have
 \begin{align}\label{thmllipre}
 \Big(\fint_{Q_r}|\na u^\lm_\va |^2\Big)^{1/2}\leq C \Big\{\Big(\fint_{Q_1}|\na u^\lm_\va |^2\Big)^{1/2} +\Big(\fint_{Q_1}|F|^p\Big)^{1/p} \Big\},
 \end{align}
 where $C$ depends only on $d$, $\mu$, $p$ and $(\theta,L)$ in \eqref{cod3}.
\end{theorem}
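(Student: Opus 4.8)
The plan is to carry out the Avellaneda--Lin large-scale iteration, with Lemma~\ref{inliple-2} playing the role of the one-step excess-decay estimate; the argument follows \cite{shenan2017} (see also \cite{shennote2018}). Write $\de=(1+\sqrt{\lm})\va$. For $r$ bounded below by a fixed fraction of $1$ the bound \eqref{thmllipre} is immediate, since $\fint_{Q_r}|\na u^\lm_\va|^2\le C\fint_{Q_1}|\na u^\lm_\va|^2$; so it suffices to treat small $r$, say $\de\le r\le 1/8$, and I may also assume $\de$ small, as otherwise \eqref{thmllipre} follows directly from Caccioppoli's inequality.

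Fix $r_0=1/4$, let $\zeta\in(0,1/4)$ be the constant of Lemma~\ref{inliple-1}, and put $r_k=\zeta^k r_0$; let $N$ be the largest index with $r_N\ge\de$. Set $H(r)=G(r;u^\lm_\va)$ and $h(r)=\frac1r\inf_{c\in\R}\big(\fint_{Q_{2r}}|u^\lm_\va-c|^2\big)^{1/2}+r\big(\fint_{Q_{2r}}|F|^p\big)^{1/p}$. Since the proof of Lemma~\ref{inliple-2} (and of the approximation estimate \eqref{thmapre2} behind it) is unchanged upon adding a constant to $u^\lm_\va$, Lemma~\ref{inliple-2} in fact gives $H(\zeta r)\le\frac12 H(r)+C(\de/r)^\sigma h(r)$ for $\de\le r\le r_0$; moreover, writing $P_{2r}\in\mathcal P$ for a near-minimizer in \eqref{gru} at scale $2r$, one has $h(r)\le C\big(H(2r)+|\na P_{2r}|\big)$, and $H$, $h$ obey the mild comparability relations required by the iteration lemma. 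Because $\de\le r_N$ forces $\de/r_k\le\zeta^{N-k}$ for $0\le k\le N$, the errors sum: $\sum_{k=0}^{N}(\de/r_k)^\sigma\le\sum_{j\ge0}\zeta^{j\sigma}\le C$. The iteration lemma (as in \cite{shenan2017}, cf.\ \cite{shennote2018}) then yields $\sup_{\de\le r\le r_0}H(r)\le C\big(H(r_0)+h(r_0)\big)$ and $\sum_{k=0}^{N}H(r_k)\le C\big(H(r_0)+h(r_0)\big)$; and since $u^\lm_\va$ is a solution, a Caccioppoli inequality together with a Poincar\'e inequality valid for weak solutions of parabolic equations gives $H(r_0)+h(r_0)\le C\big\{\big(\fint_{Q_1}|\na u^\lm_\va|^2\big)^{1/2}+\big(\fint_{Q_1}|F|^p\big)^{1/p}\big\}=:CK$.

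To upgrade this to a gradient bound I would compare near-minimizers at consecutive scales. As $Q_{r_{k+1}}\subset Q_{r_k}$ with $r_{k+1}=\zeta r_k$,
\[
|\na P_{r_{k+1}}-\na P_{r_k}|\le\frac{C}{r_{k+1}}\Big(\fint_{Q_{r_{k+1}}}|P_{r_{k+1}}-P_{r_k}|^2\Big)^{1/2}\le C\big(H(r_k)+H(r_{k+1})\big),
\]
so that summing in $k$ and using $|\na P_{r_0}|\le CK$ gives $\sup_{\de\le r\le r_0}|\na P_r|\le CK$. Finally, for $\de\le r\le1/8$ Caccioppoli's inequality on $Q_r\subset Q_{2r}$ and the definition \eqref{gru} of $G$ give
\[
\Big(\fint_{Q_r}|\na u^\lm_\va|^2\Big)^{1/2}\le\frac{C}{r}\inf_{c\in\R}\Big(\fint_{Q_{2r}}|u^\lm_\va-c|^2\Big)^{1/2}+Cr\Big(\fint_{Q_{2r}}|F|^p\Big)^{1/p}\le C\big(H(2r)+|\na P_{2r}|\big)\le CK,
\]
which is \eqref{thmllipre}.

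I expect the only genuinely delicate point to be the bookkeeping that feeds the iteration lemma---in particular confirming that the error term of Lemma~\ref{inliple-2} is dominated by $H$ at a comparable scale (this is why one works throughout with the scale-$2r$ oscillation $\inf_{c}(\fint_{Q_{2r}}|u^\lm_\va-c|^2)^{1/2}$ rather than with $(\fint_{Q_{2r}}|u^\lm_\va|^2)^{1/2}$), so that the contraction factor $\frac12$ survives. There is no new obstacle beyond this: all the substantive input---the qualitative homogenization and corrector/flux-corrector estimates of Section~2 and the smooth-approximation estimate \eqref{thmapre2} of Section~3---is already in hand, and the lower threshold $r\ge(1+\sqrt{\lm})\va$ in \eqref{thmllipre} is exactly the range in which Lemma~\ref{inliple-2} is available.
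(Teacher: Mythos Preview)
Your proposal is correct and follows essentially the same approach as the paper, which simply invokes Lemma~\ref{inliple-2} together with the abstract iteration lemma of \cite{shenan2017} (Lemma~8.5) and refers to \cite{gsamar2020} for the details. You have in effect written out the content of that iteration lemma (the geometric decay of $H$, the summability of the Dini-type error, and the telescoping control of $|\nabla P_{r_k}|$), which is exactly what the omitted argument amounts to.
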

\begin{proof}
The proof,  based on Lemma \ref{inliple-2} and the abstract iteration lemma in \cite{shenan2017} (Lemma 8.5), is almost the same as that for Theorem 6.1 in \cite{gsamar2020}, where the Lipschitz estimate uniform down to the scale $\va+\va^{\ell/2}$ for the operator $\pa_t -\text{div}(A(x/\va, t/\va^{\ell})\na)$ was established. Let us omit the details here for concision.
\end{proof}

 For case $\kappa=\va$, Theorem \ref{thmllip} gives the  interior Lipschitz estimate for the locally periodic  operator  $\pa_t  +\mathcal{L}_\va =\pa_t - \text{div} (A(x,t,x/\va,t/\va^2) \na )$ uniform down to the scale $\va$. Precisely speaking, let $u_\va$ be a weak solution to $ \pa_t u_\va +\mathcal{L}_\va u_\va  =F $ in $Q_1$ with $F\in L^p(Q_1), p> d+2$. Then for any $\va\leq r<1$,
 \begin{align}\label{thmllipre'}
 \Big(\fint_{Q_r}|\na u_\va |^2\Big)^{1/2}\leq C \Big\{\Big(\fint_{Q_1}|\na u_\va |^2\Big)^{1/2} +\Big(\fint_{Q_1}|F|^p\Big)^{1/p} \Big\},
 \end{align} where $C$ depends only on $d$, $\mu$, $p$ and $(\theta, L)$ in \eqref{cod3}.
Under additional smoothness assumption on $A$, one can get the uniform Lipschitz estimate in the small scales for the operator $\pa_t+\mathcal{L}_\va$ , which is new to our best knowledge.
\begin{theorem}\label{pointlip}
 Assume that $A(x,t,y,s)$ satisfies \eqref{cod1}, \eqref{cod2} and \eqref{cod33}.
 Let $u_\va $ be a solution to $ \pa_t u_\va -\text{div}(A(x,t,x/\va,t/\va^2)\na u_\va )=F $ in $Q_1$ with $F\in L^p(Q_1), p> d+2$. Then for any $0<\va<\infty$,
 \begin{align}\label{pointlipre}
|\na u_\va (0,0)| \leq C \Big\{\Big(\fint_{Q_1}|\na u_\va |^2\Big)^{1/2} +\Big(\fint_{Q_1}|F|^p\Big)^{1/p} \Big\},
 \end{align}
 where $C$ depends only on $d$, $\mu$, $p$, and $(\vartheta,M)$ in \eqref{cod33}.
\end{theorem}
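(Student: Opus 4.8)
The plan is to combine the large-scale estimate \eqref{thmllipre'} --- which already gives the interior Lipschitz estimate uniformly down to the scale $\va$ --- with classical interior regularity for parabolic equations with H\"older coefficients, the latter being invoked only to descend \emph{below} the scale $\va$, where the homogenization structure plays no role at all. Accordingly I would split into the two cases $\va\geq 1/2$ and $0<\va<1/2$.

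First I would dispose of the easy case $\va\geq 1/2$. Here the coefficient $B(x,t):=A(x,t,x/\va,t/\va^2)$ is, by \eqref{cod33}, parabolic-H\"older continuous on $Q_1$ with $\|B\|_{C^{\vartheta,\vartheta/2}(Q_1)}\leq M(1+\va^{-1})^\vartheta\leq 3^\vartheta M$, so $u_\va$ solves a divergence-form parabolic equation with H\"older coefficients and right-hand side $F\in L^p(Q_1)$, $p>d+2$. The classical interior $C^{1,\al}$ (Schauder-type) estimate then yields
\[
|\na u_\va(0,0)|\leq C\Big\{\Big(\fint_{Q_{1/2}}|\na u_\va|^2\Big)^{1/2}+\Big(\fint_{Q_{1/2}}|F|^p\Big)^{1/p}\Big\},
\]
and enlarging $Q_{1/2}$ to $Q_1$ gives \eqref{pointlipre}, with $C$ depending only on $d,\mu,p,(\vartheta,M)$.

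So suppose $0<\va<1/2$, and perform a blow-up: set $w(x,t):=u_\va(\va x,\va^2 t)$. A direct computation shows $w$ is a weak solution of $\pa_t w-\text{div}(B(x,t)\na w)=\widetilde F$ in $Q_{1/\va}\supseteq Q_2$, where now $B(x,t):=A(\va x,\va^2 t,x,t)$ and $\widetilde F(x,t):=\va^2 F(\va x,\va^2 t)$. The crucial point is that, by \eqref{cod33}, $\|B\|_{C^{\vartheta,\vartheta/2}(Q_2)}\leq M(1+\va)^\vartheta\leq 2^\vartheta M$ \emph{uniformly} in $0<\va<1/2$; it is exactly for this $\va$-uniform H\"older bound that the joint smoothness \eqref{cod33} is imposed in place of the weaker \eqref{cod3}. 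Applying the classical interior $C^{1,\al}$ estimate to $w$ on $Q_1$ (with data on $Q_2$), and then undoing the scaling --- using $\na w(\,\cdot\,)=\va(\na u_\va)(\va\,\cdot\,,\va^2\,\cdot\,)$ and the change of variables in the averages --- I would arrive at
\[
|\na u_\va(0,0)|\leq C\Big\{\Big(\fint_{Q_{2\va}}|\na u_\va|^2\Big)^{1/2}+\va\Big(\fint_{Q_{2\va}}|F|^p\Big)^{1/p}\Big\}.
\]

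It then remains to chain this with the large-scale estimate \eqref{thmllipre'} applied at radius $r=2\va\in[\va,1)$, which bounds $(\fint_{Q_{2\va}}|\na u_\va|^2)^{1/2}$ by $C\{(\fint_{Q_1}|\na u_\va|^2)^{1/2}+(\fint_{Q_1}|F|^p)^{1/p}\}$, together with the elementary observation that $\va(\fint_{Q_{2\va}}|F|^p)^{1/p}\leq C\va^{1-(d+2)/p}(\fint_{Q_1}|F|^p)^{1/p}\leq C(\fint_{Q_1}|F|^p)^{1/p}$, where $p>d+2$ and $\va<1$ are used. Combining these gives \eqref{pointlipre}. I expect the only genuine obstacle to be the $\va$-uniform H\"older control of the blown-up coefficient $B(x,t)=A(\va x,\va^2 t,x,t)$; once that is in hand the argument is a routine blow-up, and no homogenization input beyond \eqref{thmllipre'} is needed.
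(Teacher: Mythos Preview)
Your proposal is correct and follows essentially the same approach as the paper: split off the trivial large-$\va$ case by direct application of classical Schauder theory, then for small $\va$ perform the parabolic blow-up $w(x,t)=u_\va(\va x,\va^2 t)$ so that the rescaled coefficient $A(\va x,\va^2 t,x,t)$ is uniformly H\"older by \eqref{cod33}, apply the classical interior Lipschitz estimate to $w$, and finally chain with the large-scale bound \eqref{thmllipre'}. The only cosmetic differences are that the paper normalizes the blow-up as $\va^{-1}u_\va(\va x,\va^2 t)$ (so $\na w(0,0)=\na u_\va(0,0)$ directly), uses the threshold $\va<1/4$ rather than $1/2$, and invokes \eqref{thmllipre'} at $r=\va$ rather than $r=2\va$; none of these affect the argument.
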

\begin{proof}
It suffices to consider the case $0<\va<1/4$, for otherwise $A(x,t,x/\va,t/\va^2)$ is uniformly H\"{o}lder continuous in $(x,t).$ Then the estimate \eqref{pointlipre} follows from the well-known Schauder estimates of parabolic systems with H\"{o}lder continuous coefficients.

To handle the case $0<\va<1/4$, we set $w_\va=\va^{-1}u(\va x,\va^2 t)$.  It is obvious that $\na w_\va(0,0)=\na u_\va (0,0),$ and $w_\va$ satisfies the equation
\begin{align*}
\pa_t w_\va -\text{div} (A(\va x, \va^2t, x, t ) \na w_\va)=\va F(\va x,\va^2t).
\end{align*}
Since $ A(\va x, \va^2 t, x,t)$ is uniformly H\"{o}lder continuous in $(x,t)$. By the interior Lipschitz estimates of parabolic systems with H\"{o}lder continuous coefficients, we have
\begin{align*}
|\na w_\va(0,0)|&\leq C \Big\{\Big(\fint_{Q_1}|\na w_\va|^2\Big)^{1/2} +\va \Big(\fint_{Q_1}|F(\va x, \va^2 t)|^p\Big)^{1/p} \Big\}\\
&\leq C  \Big\{\Big(\fint_{Q_\va}|\na u_\va |^2\Big)^{1/2} + \va^{1-(d+2)/p}\Big(\fint_{Q_1}|F(x,t)|^p\Big)^{1/p} \Big\},
\end{align*}
which, together with \eqref{thmllipre'}, gives \eqref{pointlipre}.
\end{proof}

As direct consequences of Theorem \ref{pointlip}, we have the following two corollaries, which provide the Lipschitz estimates for the locally periodic  parabolic operators with only spatial or temporal oscillations.
\begin{coro}\label{coro1}
Let $v_\va$ be a weak solution to $\pa_t v_\va -\text{div} (A(x,t,t/\va) \na v_\va)=F$ in $Q_1$, where $F\in L^p(Q_1)$ with $p>d+2$,   $A=A(x,t,s)$ is 1-periodic in $s$ and satisfies \eqref{cod1}. Assume  that
\begin{align}\label{coro1con}
|A(x,t,s)-A(x',t',s')|\leq L \big(|x-x'|+|t-t'|^{1/2}+|s-s'|^{1/2} \big)^\theta
\end{align} for some  $\theta\in (0,1]$ and $L>0$.
Then for any $0<\va <\infty,$
\begin{align*}
|\na v_\va(0,0)| \leq C \Big\{\Big(\fint_{Q_1}|\na v_\va|^2\Big)^{1/2} +\Big(\fint_{Q_1}|F|^p\Big)^{1/p} \Big\},
 \end{align*}
 where $C$ depends only on $d$, $\mu$, $p,$ and $(\theta, L)$ in \eqref{coro1con}.
\end{coro}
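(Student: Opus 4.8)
The plan is to obtain Corollary~\ref{coro1} as an immediate consequence of Theorem~\ref{pointlip}, via a reinterpretation of the microscopic scale. The point is that a purely temporal oscillation of frequency $\va^{-1}$ fits into the framework of Theorem~\ref{pointlip} once one takes the coefficient to be independent of the fast spatial variable and matches $t/\va$ with the parabolic scaling $t/{\va'}^{2}$. Concretely, I would set $\va'=\sqrt{\va}$ and define $\widetilde A(x,t,y,s)=A(x,t,s)$, which does not depend on $y$. Since $t/\va=t/{\va'}^{2}$, the equation in the corollary reads
\begin{align*}
\pa_t v_\va-\text{div}\big(\widetilde A(x,t,x/\va',t/{\va'}^{2})\na v_\va\big)=F \qquad\text{in } Q_1,
\end{align*}
so $v_\va$ is precisely a weak solution of the type of equation treated in Theorem~\ref{pointlip}, with microscopic parameter $\va'$ in place of $\va$.

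Next I would check that $\widetilde A$ meets the hypotheses of Theorem~\ref{pointlip}. Condition~\eqref{cod1} is inherited from $A$ with the same ellipticity constant $\mu$. Condition~\eqref{cod2} holds because $\widetilde A$ is $1$-periodic in $s$ by assumption and is $1$-periodic in $y$ trivially, being constant in $y$. Finally, \eqref{cod33} holds with $\vartheta=\theta$ and $M=L$: since $\widetilde A$ does not depend on $y$,
\begin{align*}
|\widetilde A(x,t,y,s)-\widetilde A(x',t',y',s')|=|A(x,t,s)-A(x',t',s')|\le L\big(|x-x'|+|t-t'|^{1/2}+|s-s'|^{1/2}\big)^\theta,
\end{align*}
and the right-hand side is bounded by $L\big(|x-x'|+|y-y'|+|t-t'|^{1/2}+|s-s'|^{1/2}\big)^\theta$.

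With these verifications, Theorem~\ref{pointlip} applied to $v_\va$ with parameter $\va'$ --- which ranges over all of $(0,\infty)$ as $\va$ does --- gives
\begin{align*}
|\na v_\va(0,0)|\leq C\Big\{\Big(\fint_{Q_1}|\na v_\va|^2\Big)^{1/2}+\Big(\fint_{Q_1}|F|^p\Big)^{1/p}\Big\},
\end{align*}
with $C$ depending only on $d$, $\mu$, $p$, and $(\vartheta,M)=(\theta,L)$, which is exactly the assertion. There is no genuine obstacle here; the only points deserving a moment's care are to match the scalings $\va={\va'}^{2}$ correctly and to note that Theorem~\ref{pointlip} requires only the joint H\"older continuity~\eqref{cod33} and no higher smoothness, so that the constant indeed depends only on $(\theta,L)$. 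If a self-contained argument were preferred, one could instead combine the large-scale estimate~\eqref{thmllipre'} (in its $\widetilde A$ version, valid down to the scale $\va'$) with the blow-up $w(x,t)={\va'}^{-1}v_\va(\va' x,{\va'}^{2}t)$, whose rescaled coefficient $A(\va' x,{\va'}^{2}t,t)$ is uniformly H\"older continuous in $(x,t)$ on $Q_1$, and then invoke classical Schauder estimates; but the reduction to Theorem~\ref{pointlip} above is cleaner.
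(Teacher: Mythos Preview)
Your proposal is correct and matches the paper's approach exactly: the paper states Corollary~\ref{coro1} (and its companion Corollary~\ref{coro2}) as direct consequences of Theorem~\ref{pointlip} without further detail, and your reduction via $\widetilde A(x,t,y,s)=A(x,t,s)$ and $\va'=\sqrt{\va}$ is precisely how that direct application is carried out.
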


\begin{coro}\label{coro2}
Let $v_\va$ be a weak solution to $\pa_t v_\va -\text{div} (A(x,t,x/\va) \na v_\va)=F$ in $Q_1$, where $F\in L^p(Q_1)$ with $p>d+2$,   $A=A(x,t,y)$ is 1-periodic in $y$ and satisfies \eqref{cod1}. Assume that
\begin{align}\label{coro2con}
|A(x,t,y)-A(x',t',y')|\leq L \big(|x-x'|+|y-y'|+|t-t'|^{1/2} \big)^\theta
\end{align}for some  $\theta\in (0,1]$ and $L>0$.
Then for any $0<\va <\infty,$
\begin{align}\label{coro2re}
|\na v_\va(0,0)| \leq C \Big\{\Big(\fint_{Q_1}|\na v_\va|^2\Big)^{1/2} +\Big(\fint_{Q_1}|F|^p\Big)^{1/p} \Big\},
 \end{align} where $C$ depends only on $d$, $\mu$, $p$, and $(\theta, L)$ in \eqref{coro2con}.
\end{coro}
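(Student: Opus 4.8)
The plan is to reduce Corollary \ref{coro2} to Theorem \ref{pointlip} by regarding the purely spatially oscillating coefficient $A=A(x,t,y)$ as a locally periodic coefficient that happens not to depend on the fast temporal variable. Concretely, I would introduce
\begin{align*}
\widetilde{A}(x,t,y,s):=A(x,t,y)\qquad\text{for all }x,y\in\R^d,\ t,s\in\R,
\end{align*}
so that $\widetilde{A}(x,t,x/\va,t/\va^2)=A(x,t,x/\va)$, and hence $v_\va$ is also a weak solution of $\pa_t v_\va-\text{div}\big(\widetilde{A}(x,t,x/\va,t/\va^2)\na v_\va\big)=F$ in $Q_1$.

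The first step is to check that $\widetilde{A}$ satisfies the hypotheses \eqref{cod1}, \eqref{cod2} and \eqref{cod33} of Theorem \ref{pointlip}. The ellipticity and boundedness \eqref{cod1} are inherited from $A$ with the same constant $\mu$. Since $\widetilde{A}$ is independent of $s$, the $1$-periodicity in $(y,s)$ demanded by \eqref{cod2} reduces to the $1$-periodicity of $A$ in $y$, which is part of the hypothesis. For the H\"older condition, the assumption \eqref{coro2con} gives, for all $x,x',y,y'\in\R^d$ and $t,t',s,s'\in\R$,
\begin{align*}
|\widetilde{A}(x,t,y,s)-\widetilde{A}(x',t',y',s')|
&=|A(x,t,y)-A(x',t',y')|\\
&\le L\big(|x-x'|+|y-y'|+|t-t'|^{1/2}\big)^\theta\\
&\le L\big(|x-x'|+|y-y'|+|t-t'|^{1/2}+|s-s'|^{1/2}\big)^\theta,
\end{align*}
so $\widetilde{A}$ satisfies \eqref{cod33} with $\vartheta=\theta$ and $M=L$.

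With these verifications in hand, applying Theorem \ref{pointlip} to $v_\va$ (with $A$ replaced by $\widetilde{A}$) yields \eqref{coro2re} at once, and the bounding constant depends only on $d$, $\mu$, $p$ and $(\vartheta,M)=(\theta,L)$, as claimed. I do not expect any genuine obstacle here: the whole content of the corollary is the elementary observation that a coefficient with only spatial oscillations is a special case of the self-similar locally periodic structure $A(x,t,x/\va,t/\va^2)$ covered by Theorem \ref{pointlip}. In particular, the assumption \eqref{ratio} plays no role, and the range $\va\ge 1/4$---where the rescaled coefficient $A(x,t,x/\va)$ is already uniformly H\"older continuous in $(x,t)$---is absorbed into the proof of Theorem \ref{pointlip} itself.
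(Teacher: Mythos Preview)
Your proposal is correct and matches the paper's own treatment: the paper states Corollaries \ref{coro1} and \ref{coro2} as ``direct consequences of Theorem \ref{pointlip}'' without further proof, and the reduction you spell out---viewing $A(x,t,y)$ as an $s$-independent $\widetilde{A}(x,t,y,s)$ and verifying \eqref{cod1}, \eqref{cod2}, \eqref{cod33}---is exactly the intended argument.
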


\section{Boundary Lipschitz estimates for $\pa_t+\mathcal{L}^\lm_\va$ }
Now let us consider the large-scale boundary Lipschitz estimates for the operator $\pa_t+\mathcal{L}^\lm_\va$ in \eqref{Llm}.
As in Section 4, if not stated $A(x,t,y,s)$ is always assumed to satisfy \eqref{cod1}, \eqref{cod2},  and \eqref{cod3} for some $0<\theta\leq 1$ and $L>0$.

 Let $\psi: \R^{d-1} \rightarrow \R$  be a $C^{1,\al} (0<\al<1)$ function such that $\psi(0)=0$ and $\|\psi\|_{C^{1,\al}(\R^{d-1})}\leq M_0$.
 For $0<r<\infty$, set
 \begin{align}\label{deboundary}
 \begin{split}
 &Z_r=Z(r,\psi)=\{(x',x_d):|x'|<r \text{ and } \psi(x')<x_d<\psi(x')+10(M_0\!+\!1)r\} \times (-r^2, 0),\\
 &I_r=I(r,\psi)=\{(x',x_d):|x'|<r \text{ and } x_d=\psi(x')\} \times (-r^2, 0).
 \end{split}
 \end{align}
Let $C^{1+\al}(I_{r})$ be the parabolic H\"{o}lder space with the scale-invariant norm
 \begin{align}
 \| f\|_{C^{1+\al}(I_{r})}=\|f\|_{L^\infty(I_r)}+r\|\na_{tan} f\|_{L^\infty(I_r)}+ r^{1+\al}\|\na_{tan} f\|_{C^{\al,\frac{\al}{2}}(I_r)}+r^{1+\al} \|f\|_{C_t^{\frac{1+\al}{2}}(I_r)}.
 \end{align}

 %We begin with approximation of the solution $u_\va$ of
% \begin{align} \label{b-eq1}
% \pa_t u_\va+\mathcal{L}_\va u_\va=F ~\text{ in }Z_{2}, \quad \text{ and } \quad u_\va=g ~\text{ on } I_{2}.
% \end{align}
%\begin{lemma}
%
%\end{lemma}

 \begin{theorem}\label{b-thmap}
 Let $u^\lm_\va$ be a solution to
 \begin{align*}
 \pa_t u^\lm_\va+\mathcal{L}^\lm_\va u^\lm_\va=F ~\text{ in }Z_{2r}, \quad \text{ and } \quad u^\lm_\va=g ~\text{ on } I_{2r}
 \end{align*}
  with $(1+\sqrt{\lm})\va=\de\leq r\leq 1$, and $F \in L^2(Z_{2r}), g\in C^{1+\al}(I_{2r})$.  Then there exist a solution to
$ \pa_t u^\lm_0 +\mathcal{L}^\lm_0 u^\lm_0  =F  $  in $ Z_{r}$,
such that
 \begin{align}\label{b-thmapre2}
 \begin{split}
 &\Big(\fint_{Z_{r }} |  u^\lm_\va- u^\lm_0 |^2  \Big )^{1/2}
 \leq  C \Big(\Big(\frac{\de}{r}\Big)^\sigma +\de^\theta L\Big) \left\{ \Big (\fint_{Z_{2r}}  | u^\lm_\va  |^2  \Big )^{1/2}+ r^2\Big (\fint_{Z_{2r}}  |F |^2\Big )^{1/2} + \|g\|_{C^{1+\al}(I_{2r})}\right\},
 \end{split}
 \end{align}
where $0<\sigma<1$  and $C$ depend  only on $d$, $\mu$, $\al$, and $M_0 $.
 \end{theorem}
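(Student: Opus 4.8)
The plan is to mimic, in the graph cylinders $Z_r$, the three–step proof of Theorem \ref{thmap} — coefficient smoothing, homogenization of the smoothed operator, and comparison of the two homogenized problems — while carrying along the Dirichlet datum $g$ on the flat portion $I_r$. As in Theorem \ref{thmap} it suffices to treat $r=1$: setting $\widetilde u(x,t)=u^\lm_\va(rx,r^2t)$ maps $Z_{2r}$ to a graph cylinder over $\psi_r(x')=r^{-1}\psi(rx')$, which still satisfies $\|\psi_r\|_{C^{1,\al}}\le M_0$, replaces $L$ by $r^\theta L\le L$ and $\de$ by $\de/r$, and rescales $g,F$ by the natural parabolic scaling; undoing the scaling then yields the general case. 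We may also assume $\de^\theta L<1$, since otherwise \eqref{b-thmapre2} is trivial.

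\emph{Step 1 (coefficient smoothing).} With $\de=(1+\sqrt{\lm})\va$, put $\widetilde{A^\lm}(x,t,x/\va,t/\va^2)=S_\de((A^\lm)^\va)(x,t,x/\va,t/\va^2)$; since $A^\lm$ is $(1,\lm)$–periodic in $(y,s)$, so is $\widetilde{A^\lm}$, and \eqref{leap4-1} holds. Let $v^\lm_\va$ solve $\pa_t v^\lm_\va-\mathrm{div}(\widetilde{A^\lm}(x,t,x/\va,t/\va^2)\na v^\lm_\va)=F$ in $Z_{3/2}$ with $v^\lm_\va=u^\lm_\va$ on $\pa_pZ_{3/2}$ (so $v^\lm_\va=g$ on $I_{3/2}$). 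The difference $u^\lm_\va-v^\lm_\va$ has zero parabolic–boundary data and solves a divergence–form equation with right-hand side $\mathrm{div}((A^\lm-\widetilde{A^\lm})\na u^\lm_\va)$; the energy estimate, Poincar\'e's inequality, and a boundary Caccioppoli inequality (applied after subtracting a controlled extension $G$ of $g$ with $\|G\|\le C\|g\|_{C^{1+\al}(I_2)}$) give
\begin{align*}
\Big(\int_{Z_{3/2}}|v^\lm_\va-u^\lm_\va|^2\Big)^{1/2}\le CL\de^\theta\Big\{\Big(\int_{Z_2}|u^\lm_\va|^2\Big)^{1/2}+\Big(\int_{Z_2}|F|^2\Big)^{1/2}+\|g\|_{C^{1+\al}(I_2)}\Big\}.
\end{align*}

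\emph{Step 2 (boundary convergence rate for the smoothed operator).} Let $\widehat{\widetilde{A^\lm}}$ be the homogenized matrix of $\pa_t-\mathrm{div}(\widetilde{A^\lm}(x,t,x/\va,t/\va^2)\na)$ and let $v^\lm_0$ solve $\pa_t v^\lm_0-\mathrm{div}(\widehat{\widetilde{A^\lm}}(x,t)\na v^\lm_0)=F$ in $Z_{5/4}$ with $v^\lm_0=v^\lm_\va$ on $\pa_pZ_{5/4}$; the target is the boundary analogue of Lemma \ref{leap-3}, namely $(\fint_{Z_{5/4}}|v^\lm_\va-v^\lm_0|^2)^{1/2}\le C(\de^\sigma+\de^\theta L)\{\cdots\}$. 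I form the two–scale expansion error $w_\va$ exactly as in \eqref{def-w} with $u^\lm_0$ replaced by $v^\lm_0$, but with $\eta_\de\in C_c^\infty$ equal to $1$ on $Z_{5/4-c\de}$, vanishing within $c\de$ of the \emph{whole} parabolic boundary $\pa_pZ_{5/4}$ (the flat part $I_{5/4}$ included), and $|\na\eta_\de|\le C\de^{-1}$, $|\pa_t\eta_\de|+|\na^2\eta_\de|\le C\de^{-2}$. Since the identity \eqref{leap-1re} and the bound of Lemma \ref{leap-2} are purely local, testing $(\pa_t+\mathcal{L}^\lm_\va)w_\va$ against $w_\va$ (which vanishes on $\pa_pZ_{5/4}$) gives, exactly as in Lemma \ref{leap-2},
\begin{align*}
\|\na w_\va\|_{L^2(Z_{5/4})}&\le C\big(1+\de\|\na_x\widetilde{A^\lm}\|_\infty\big)\|\na v^\lm_0\|_{L^2(Z_{5/4}\setminus Z_{5/4-c\de})}+C\de\big(\|\na_x\widetilde{A^\lm}\|_\infty+\de\|\pa_t\widetilde{A^\lm}\|_\infty\big)\|\na v^\lm_0\|_{L^2(Z_{5/4})}\\
&\quad+C\de\big(1+\de\|\na_x\widetilde{A^\lm}\|_\infty\big)\big(\|\na^2 v^\lm_0\|_{L^2(Z_{5/4-c\de})}+\|\pa_t v^\lm_0\|_{L^2(Z_{5/4-c\de})}\big).
\end{align*}
The boundary–layer term is bounded by the global $W^{1,q}$ estimate in the $C^{1,\al}$ cylinder, $\fint_{Z_{5/4}}|\na v^\lm_0|^q\le C\fint_{Z_{5/4}}|\na v^\lm_\va|^q$, combined with the boundary Meyers estimate for $v^\lm_\va$ (using the $C^{1+\al}$ datum $g$ on $I_{3/2}$) and boundary Caccioppoli, which yield $\|\na v^\lm_0\|_{L^2(Z_{5/4}\setminus Z_{5/4-c\de})}\le C\de^{\frac12-\frac1q}\{\cdots\}$; the interior terms $\na^2 v^\lm_0,\pa_t v^\lm_0$ are controlled by the $C^{1,\al}$ boundary estimate for $v^\lm_0$ (gradient H\"older up to the flat portion) together with a boundary–distance–weighted integration as in \eqref{prleapl-11}–\eqref{prleapl-12}, again producing $\de^{\frac12-\frac1q}$. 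Using \eqref{leap4-1} to absorb $\de\|\na_x\widetilde{A^\lm}\|_\infty\le CL\de^\theta<C$ and $\de^2\|\pa_t\widetilde{A^\lm}\|_\infty\le CL\de^\theta$, then Poincar\'e's inequality and $\|\va S_\de((\chi^\lm)^\va\na v^\lm_0)\eta_\de\|_{L^2}\le C\va^{1/2}\|\na v^\lm_0\|_{L^2}$ from Lemma \ref{le2.0}, gives the desired bound with $\sigma=\tfrac12-\tfrac1q>0$.

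\emph{Step 3 (comparison of homogenized problems) and conclusion.} Let $u^\lm_0$ solve $\pa_t u^\lm_0-\mathrm{div}(\widehat{A^\lm}(x,t)\na u^\lm_0)=F$ in $Z_1$ with $u^\lm_0=v^\lm_0$ on $\pa_pZ_1$; then $u^\lm_0=g$ on $I_1$. Since $\|A^\lm-\widetilde{A^\lm}\|_\infty\le CL\de^\theta$, the argument after \eqref{lecore2} gives $\|\widehat{A^\lm}-\widehat{\widetilde{A^\lm}}\|_\infty\le CL\de^\theta$, so energy estimates and Caccioppoli yield $\|v^\lm_0-u^\lm_0\|_{L^2(Z_1)}\le CL\de^\theta\{\cdots\}$. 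Combining Steps 1–3 with the triangle inequality proves \eqref{b-thmapre2} for $r=1$, and the rescaling of the first paragraph gives the general $\de\le r\le1$. The main obstacle is the boundary convergence-rate estimate of Step 2, specifically controlling $\na^2 v^\lm_0$ and $\pa_t v^\lm_0$ near the merely $C^{1,\al}$ boundary, where only $C^{1,\al}$–regularity (not $W^{2,2}$) is available up to the flat portion, so the weighted $L^2$ integration must be carried out against the boundary-distance weight with the $C^{1,\al}$ modulus of $\na v^\lm_0$; one must also choose $\eta_\de$ adapted to the graph geometry so that the local computation of Lemma \ref{leap-2} carries over verbatim, and handle the inhomogeneous datum $g$ in the Caccioppoli and Meyers estimates through a controlled extension.
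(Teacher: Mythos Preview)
Your proposal is correct and follows essentially the same approach as the paper: the paper's proof sketches exactly this three-step process (coefficient smoothing, boundary analogue of Lemma \ref{leap-3} via the two-scale expansion with a cutoff vanishing near all of $\pa_p Z_1$, comparison of the two homogenized problems), reducing to $r=1$ by rescaling. Your worry about needing $C^{1,\al}$ regularity of $\na v^\lm_0$ up to the flat portion is unnecessary, since with $\eta_\de$ vanishing near $I_{5/4}$ as well, the $\na^2 v^\lm_0$ and $\pa_t v^\lm_0$ terms live in a genuinely interior set $Z_{5/4-c\de}$, so the interior $H^2$ estimate and the distance-weighted integration of \eqref{prleapl-11}--\eqref{prleapl-12} go through verbatim --- this is exactly what the paper means by ``the interior $H^2$ estimate'' in deriving \eqref{prb-thmapre2-1}.
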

\begin{proof}
%Let $\widetilde{u^\lm_\va}(x)=u^\lm_\va (rx)$. Then $\widetilde{u^\lm_\va}$ is a solution to
%\begin{align*}
% \pa_t \widetilde{u^\lm_\va} -\text{div}(A^\lm(rx, r^2t, x/{\va'},t/{\va'}^2)\na \widetilde{u^\lm_\va})=\widetilde{F} ~\text{ in }Z(2,\psi_r), \quad \text{ and } \quad \widetilde{u^\lm_\va}=\widetilde{g} ~\text{ on } I(2,\psi_r)
% \end{align*}
%where ${\va'}=\va/r, \widetilde{F}(x,t)=r^2F(rx,r^2t), \widetilde{g}(x,t)=g(rx, r^2t),  \psi_r(x)=r^{-1} \psi(rx)$. Note that $\psi_r$ satisfies the same assumption as $\psi$, and  $\mathcal{A}(x,t,y,s) $ satisfies \eqref{thmap-1}.
 The proof is completely parallel to that of Theorem  \ref{thmap}. We just sketch it. By rescaling, it suffices to consider the case $r=1.$\\
\noindent{\textbf{Step 1.}} Assume that $A=A(x,t,y,s)$ satisfies (\ref{cod1}), (\ref{cod2}) and \eqref{ainfty}.  Let $u^\lm_0$ be the solution to $ \pa_t u^\lm_0 +\mathcal{L}^\lm_0 u^\lm_0  =F  $  in $ Z_1$, and $u^\lm_0=u^\lm_\va $ on $\pa_p Z_1$, where $\pa_t+\mathcal{L}^\lm_0$ is the homogenized operator of $\pa_t+\mathcal{L}^\lm_\va$.
Let $w^\lm_\va $ be defined as in \eqref{def-w} with the cut-off function $\eta_\de \in C_c^\infty(\R^{d+1})$,
  $0\leq\eta_\de\leq 1$ and
\begin{align*}
 \begin{split}
 &\eta_\de =1  \,\,\text{ in } Z_{1-6\de},\quad  \eta_\de =0 \,\,\text{ in }  Z_1\!\setminus\! Z_{1-4\de}, \\
& |\na \eta_\de |\leq C \de^{-1}, \quad |\pa_t \eta_\de |+ |\nabla^2\eta_\de|\leq C \de^{-2}.
\end{split}\end{align*}
By performing similar analysis as in Lemma \ref{leap-2}, we can prove that
 \begin{align*}
\|\na w^\lm_\va \|_{L^2(Z_1)}
&\leq C \big\{\de\|\na_x A^\lm \|_{\infty} +1 \big\}  \|\na u^\lm_0\|_{L^2(Z_1\setminus Z_{1-5\de})}\nonumber\\
 &\quad +C \de\big(\|\na_x A^\lm\|_{\infty}\!+\!\de \|\pa_t A\|_{\infty}\big)\|\na u^\lm_0\|_{L^2(Z_1)} \nonumber\\
&\quad+ C\de \big (\de\|\na_x A^\lm\|_{\infty} +1 \big)
  \big (\|\na^2 u^\lm_0\|_{L^2(Z_{1-2\de})} +\|\pa_t u^\lm_0\|_{L^2(Z_{1-2\de})}\big),
\end{align*}
which, together with Poincar\'{e}'s inequality and estimates \eqref{prleapl-5} and \eqref{prleapl-5'} on $Z_1$ instead of $Q_1$, implies that
\begin{align*}
\|u^\lm_\va -u^\lm_0\|_{L^2(Z_1)}
&\leq C  \big\{\de\|\na_x A^\lm \|_{\infty} +1 \big\}\|\na u^\lm_0\|_{L^2(Z_1)\setminus Z_{1-5\de})}
 \nonumber\\
&\quad+C \de\big(\|\na_x A^\lm\|_{\infty}\!+\!\de \|\pa_t A^\lm\|_{\infty}\big)\|\na u^\lm_0\|_{L^2(Z_1)}\\
&\quad+ C\de \big (\de\|\na_x A^\lm\|_{\infty} +1 \big)
  \big (\|\na^2 u^\lm_0\|_{L^2(Z_{1-2\de})} +\|\pa_t u^\lm_0\|_{L^2(Z_{1-2\de})}\big).\nonumber
\end{align*}
By the Meyer's estimate, the interior $H^2$ estimate, and Caccioppolli's inequality, we can deduce that
\begin{align}\label{prb-thmapre2-1}
\begin{split}
  \| u^\lm_\va -u^\lm_0\|_{L^2(Z_{1}) }
 &\leq  C \big(\de^\sigma \!+\! \de \|\na_x A^\lm\|_{\infty}+ \de^2\|\pa_t A^\lm\|_{\infty}\big) \big(\de \|\na_x A^\lm\|_{\infty}\! +\!1 \big)\\
 & \quad\times \big\{  \|u^\lm_\va \|_{L^2(Z_2)} +  \|F\|_{L^2(Z_2)}+ \|g\|_{C^{1+\al}(I_2)} \big\}
\end{split}
\end{align}
 for some $0<\sigma<1$, where $C$ depends only on $d, \mu, \al$  and $M_0 $.

\noindent{\textbf{Step 2.}} Now we assume that $A=A(x,t,y,s)$ satisfies (\ref{cod1})-\eqref{cod3}. By approximating $A$ with matrices smooth in the $x,t$ variables as in Theorem \ref{thmap}, one can prove the desired estimate accordingly. We omit the details and close the proof here.
\end{proof}

\begin{lemma}\label{b-liple-1}
Let $u^\lm_0$ be a weak solution to $ \pa_t u^\lm_0+\mathcal{L}^\lm_0 u^\lm_0 =F $ in $Z_r$, and $u^\lm_0=g$ on $I_r$, where $0<r\leq 1$, $F\in L^p(Z_r)$ for some $p>d+2$ and $g\in C^{1+\al}(I_r)$.
Define
\begin{align}\label{b-gru}
\mathfrak{G}(r;u^\lm_0)= \frac{1}{r}\inf_{P\in \mathcal{P}} \Big\{\Big(\fint_{Z_r}|u^\lm_0\!-\!P|^2\Big)^{1/2}+\|g\!-\!P\|_{C^{1+\al}(I_r)}\Big\}+r^{\nu}|\na  P|+ r  \Big(\fint_{Z_r}|F|^p \Big)^{1/p},
\end{align}
where $\nu=\min\{\theta, \al, 1-(d+2)/p\}$, and $\mathcal{P}$ denotes the set of linear functions  $E  x+b$ with $E\in \R^{d}$ and $b\in \R.$  Then
there exists $\tau\in (0,1/4)$, depending only on $d$, $\mu$, $p$, $\al$, $M_0,$ and $(\theta,L)$ in \eqref{cod3}, such that
\begin{align}\label{b-liple1re}
\mathfrak{G}(\tau r;u^\lm_0)\leq \frac{1}{2} \mathfrak{G}(r;u^\lm_0).
\end{align}
\end{lemma}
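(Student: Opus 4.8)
The proof parallels that of the interior Lemma \ref{inliple-1}, run now at the boundary vertex $(0,0)\in I_r$; the strategy is again to establish, for a suitably small $\tau$, a one-step decay of the excess quantity $\mathfrak{G}$. The feature keeping all constants independent of $\lm$ is Lemma \ref{le-co}: the homogenized coefficient $\widehat{A^\lm}$ satisfies \eqref{cod3} with the same $\theta$ and with constant $CL$, $C=C(d,\mu)$. Consequently the interior and boundary $C^{1+\nu,(1+\nu)/2}$ Schauder estimates and the boundary Lipschitz estimate for $\pa_t+\mathcal{L}^\lm_0=\pa_t-\text{div}(\widehat{A^\lm}(x,t)\na)$ on the $C^{1,\al}$ graph domain $Z_r$ all hold with constants depending only on $d$, $\mu$, $p$, $\al$, $M_0$ and $(\theta,L)$, uniformly in $\lm$; the correctors do not enter.

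First I would take $P_0(x)=g(0,0)+\na u^\lm_0(0,0)\cdot x$, the first-order parabolic Taylor polynomial of $u^\lm_0$ at $(0,0)$ (legitimate because, by the boundary Schauder estimate, $u^\lm_0\in C^{1+\nu,(1+\nu)/2}$ up to $I_r$, with $\nu=\min\{\theta,\al,1-(d+2)/p\}$), and use it as a competitor in the infimum defining $\mathfrak{G}(\tau r;u^\lm_0)$ in \eqref{b-gru}. Since $u^\lm_0-P_0$ vanishes to first order at $(0,0)$ and equals $g-P_0$ on $I_r$, a Taylor/path-integration argument in the Lipschitz domain $Z_{\tau r}$ bounds the volume term and \emph{every} part of $\frac{1}{\tau r}\|g-P_0\|_{C^{1+\al}(I_{\tau r})}$ by $C(\tau r)^\nu\big(\|\na u^\lm_0\|_{C^{\nu,\nu/2}(Z_{\tau r})}+\|u^\lm_0\|_{C_t^{(1+\nu)/2}(Z_{\tau r})}\big)$, where $C$ now also depends on $M_0$. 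The one genuinely new point is the two scale-invariant $C^{1+\al}$-seminorm pieces of $\|g-P_0\|_{C^{1+\al}(I_{\tau r})}$: there $\na_{tan}(g-P_0)$ is the tangential projection of $\na u^\lm_0-\na u^\lm_0(0,0)$, the projection onto the tangent plane being parabolically $C^{\al,\al/2}$ with seminorm $\le CM_0$, and the estimate closes because $\na u^\lm_0-\na u^\lm_0(0,0)$ is $O((\tau r)^\nu)$ on $Z_{\tau r}$ while downgrading its $C^{\nu}$-seminorm to a $C^{\al}$-seminorm over the set $I_{\tau r}$ of diameter $\lesssim\tau r$ costs only the factor $(\tau r)^{\nu-\al}$, which is absorbed by the $(\tau r)^{1+\al}$ prefactor of that seminorm; the time seminorm is handled identically. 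Since $\nu\le 1-(d+2)/p$ one also has $\tau r(\fint_{Z_{\tau r}}|F|^p)^{1/p}\le\tau^\nu r(\fint_{Z_r}|F|^p)^{1/p}$, and since $\na P_0=\na u^\lm_0(0,0)$, altogether
\begin{align*}
\mathfrak{G}(\tau r;u^\lm_0)\le C(\tau r)^\nu\Big(\|\na u^\lm_0\|_{C^{\nu,\nu/2}(Z_{\tau r})}+\|u^\lm_0\|_{C_t^{(1+\nu)/2}(Z_{\tau r})}+|\na u^\lm_0(0,0)|\Big)+\tau^\nu r\Big(\fint_{Z_r}|F|^p\Big)^{1/p}.
\end{align*}

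Next, exactly as in Lemma \ref{inliple-1}, these seminorms are unchanged when $u^\lm_0$ is replaced by $u^\lm_0-P$ for an arbitrary $P\in\mathcal{P}$, and $u^\lm_0-P$ solves $\pa_t(u^\lm_0-P)-\text{div}(\widehat{A^\lm}\na(u^\lm_0-P))=F+\text{div}((\widehat{A^\lm}-\widehat{A^\lm}(0,0))\na P)$ in $Z_r$ with lateral data $g-P$ on $I_r$. Because $|\widehat{A^\lm}(x,t)-\widehat{A^\lm}(0,0)|\le CLr^\theta$ on $Z_r$ and $\nu\le\theta$, the coefficient term contributes only $C|\na P|$ to the Schauder bound; because $\nu\le\al$, $\|g-P\|_{C^{1+\nu}(I_r)}\le C\|g-P\|_{C^{1+\al}(I_r)}$; and a short chain of shrinking cylinders passes from the $L^\infty$-norm of $u^\lm_0-P$ to its $L^2$-average over $Z_r$. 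Hence the boundary $C^{1+\nu}$ estimate on $Z_{r/2}\supset Z_{\tau r}$ yields
\begin{align*}
\|\na(u^\lm_0-P)\|_{C^{\nu,\nu/2}(Z_{\tau r})}+\|u^\lm_0-P\|_{C_t^{(1+\nu)/2}(Z_{\tau r})}\le\frac{C}{r^{1+\nu}}\Big(\fint_{Z_r}|u^\lm_0-P|^2\Big)^{1/2}+C|\na P|+\frac{C}{r^{1+\nu}}\|g-P\|_{C^{1+\al}(I_r)}+Cr^{1-\nu}\Big(\fint_{Z_r}|F|^p\Big)^{1/p},
\end{align*}
and the boundary Lipschitz estimate applied to $u^\lm_0-P$, together with $|\na u^\lm_0(0,0)|\le|\na(u^\lm_0-P)(0,0)|+|\na P|$, gives the same right-hand side with $r^{-1}$ and $r$ in place of $r^{-1-\nu}$ and $r^{1-\nu}$.

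Substituting the last two displays into the display of the second paragraph and using $r\le 1$ together with $\nu=\min\{\theta,\al,1-(d+2)/p\}$, every resulting term is $\le C\tau^\nu$ times one of $\frac{1}{r}(\fint_{Z_r}|u^\lm_0-P|^2)^{1/2}$, $r^\nu|\na P|$, $\frac{1}{r}\|g-P\|_{C^{1+\al}(I_r)}$, $r(\fint_{Z_r}|F|^p)^{1/p}$; taking the infimum over $P\in\mathcal{P}$ in \eqref{b-gru} then gives $\mathfrak{G}(\tau r;u^\lm_0)\le C\tau^\nu\,\mathfrak{G}(r;u^\lm_0)$ with $C=C(d,\mu,p,\al,M_0,\theta,L)$, and choosing $\tau\in(0,1/4)$ with $C\tau^\nu\le\frac{1}{2}$ yields \eqref{b-liple1re}; note $\tau$ does not depend on $\lm$. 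The step demanding real care — and the only genuine departure from the interior Lemma \ref{inliple-1} — is the control of $\frac{1}{\tau r}\|g-P_0\|_{C^{1+\al}(I_{\tau r})}$ sketched above: one must estimate its $C^{1+\al}$-seminorm pieces against the merely $C^{1+\nu}$ regularity of $u^\lm_0$ up to $I_r$ (which is what forces $\al$ into the definition of $\nu$), while routing the constant vector $\na P_0$ through the $C^{1,\al}$ rotation of the tangent plane so that it carries an extra power of $\tau r$; everything else is a verbatim adaptation of the proof of Lemma \ref{inliple-1}, with $(Q_r,G)$ replaced by $(Z_r,\mathfrak{G})$ and the interior estimates by their boundary counterparts.
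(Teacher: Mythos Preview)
Your argument has a genuine gap in the treatment of the $C^{1+\al}$--seminorm pieces of $\frac{1}{\tau r}\|g-P_0\|_{C^{1+\al}(I_{\tau r})}$. You write that ``downgrading'' the $C^{\nu}$--seminorm of $\na u^\lm_0-\na u^\lm_0(0,0)$ to a $C^{\al}$--seminorm on a set of diameter $\lesssim\tau r$ costs the factor $(\tau r)^{\nu-\al}$, which is then absorbed by the $(\tau r)^{1+\al}$ weight. But since $\nu=\min\{\theta,\al,1-(d+2)/p\}\le\al$, the class $C^{\nu}$ is \emph{weaker} than $C^{\al}$, and there is simply no inequality of the form $[f]_{C^{\al}}\le R^{\nu-\al}[f]_{C^{\nu}}$ on a set of diameter $R$ when $\nu<\al$; indeed a merely $C^\nu$ function need not be $C^\al$ at all. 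So the step that ``closes the estimate'' for the tangential--gradient H\"older seminorm (and, by the same token, the time seminorm) is invalid whenever $\nu<\al$, i.e.\ whenever $\theta<\al$ or $1-(d+2)/p<\al$.

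The remedy --- and this is exactly what the paper does --- is to treat the seminorm pieces differently from the $L^\infty$ pieces. Write $g-P_0=(g-P)-(P_0-P)$ for the arbitrary competitor $P\in\mathcal P$. For the two seminorm terms, the $(g-P)$--part is controlled directly by the \emph{hypothesis} $g\in C^{1+\al}(I_r)$, yielding $\frac{\tau^\al}{r}\|g-P\|_{C^{1+\al}(I_r)}$; the $(P_0-P)$--part is the tangential projection of the constant vector $\na u^\lm_0(0,0)-\na P$, whose $C^{\al,\al/2}$--seminorm is $\le CM_0|\na u^\lm_0(0,0)-\na P|$ by the $C^{1,\al}$ regularity of the boundary, and after the $(\tau r)^{1+\al}$ weight and the division by $\tau r$ this gives $(\tau r)^\al|\na(u^\lm_0-P)(0,0)|\le(\tau r)^\nu|\na(u^\lm_0-P)(0,0)|$. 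The $L^\infty$ and $\na_{tan}$--$L^\infty$ pieces \emph{are} handled the way you propose, via $g=u^\lm_0$ on $I_{\tau r}$ and the $C^{1+\nu}$ regularity of $u^\lm_0$ up to the boundary. With this correction your proof aligns with the paper's and goes through; the point is that the given $C^{1+\al}$ regularity of $g$ must be invoked for the $\al$--seminorm terms rather than attempting to extract $C^{\al}$ smoothness of $\na u^\lm_0$ from its $C^{\nu}$ estimate.
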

\begin{proof}
Take $P_0=\na u^\lm_0(0,0) x+u^\lm_0(0,0)$. Similar to \eqref{prinliple-1} we deduce that
\begin{align}\label{prb-liple-1}
 \mathfrak{G}(\tau r;u^\lm_0) &\leq \frac{1}{\tau r}\|u^\lm_0\!-\!P_0\|_{L^\infty(Z_{\tau r})}\!+\!\frac{1}{\tau r}\|g\!-\!P_0\|_{C^{1+\al}(I_{\tau r})}\!+\!(\tau r)^\nu  |\na u^\lm_0(0,0)|\!+\!\tau r  \Big(\fint_{Z_{\tau r}}|F|^p \Big)^{1/p} \nonumber\\
% & \leq  (\tau r)^{\nu}\Big(\|\na u^\lm_0\|_{C^{\nu,\nu/2}(Z_{\tau r})}\!+\!\|u^\lm_0\|_{C_t^{(1+\nu)/2  }(Z_{\tau r})} \Big)   + \tau r \Big(\fint_{Z_{\tau r}}|F|^p \Big)^{1/p} \nonumber\\
% &\quad + (\tau r)^\nu  |\na u^\lm_0(0,0)-\na P|+(\tau r)^\nu |\na P|+     \frac{\tau^\al}{r}     \|g - P\|_{C^{1+\al}(I_{ r})}\nonumber\\
 &\leq  (\tau r)^{\nu}\Big(\|\na (u^\lm_0\!-\!P)\|_{C^{\nu,\nu/2}(Z_{\tau r})}+ \|u^\lm_0\!-\!P\|_{C_t^{(1+\nu)/2}(Z_{\tau r})} \Big) + \frac{\tau^\al}{r}    \|g\!-\!P\|_{C^{1+\al}(I_{r})}\nonumber\\
 &\quad  + (\tau r)^\nu  |\na u^\lm_0(0,0)\!-\!\na P|+(\tau r)^\nu |\na P|+r\tau^{1-(d+2)/p} \Big(\fint_{Z_r}|F|^p \Big)^{1/p}.
\end{align}
Since $u^\lm_0-P=g-P$ on $I_r$ and
$$ \pa_t(u^\lm_0-P)- \text{div} \big(\widehat{A^\lm}(x,t) \na (u^\lm_0-P)\big)=F+\text{div} \big( (\widehat{A^\lm}(x,t)- \widehat{A^\lm}(0,0)) \na P  \big) \quad \text{in }\, Z_r.$$
By using  Schauder estimates of the operator $\pa_t +\mathcal{L}^\lm_0$, we have for $0<\tau<1/2$,
\begin{align*}
\begin{split}
&\|\na (u^\lm_0-P)\|_{C^{\nu,\nu/2}(Z_{\tau r})}+\|u^\lm_0-P\|_{C_t^{(1+\nu)/2}(Z_{\tau r})}+r^{-\nu}\|\na (u^\lm_0-P)\|_{L^\infty(Z_{\tau r})}\\
 &\leq \|\na (u^\lm_0-P)\|_{C^{\nu,\nu/2}(Z_{r/2})}+\|u^\lm_0-P\|_{C_t^{(1+\nu)/2}(Z_{r/2})}+r^{-\nu}\|\na (u^\lm_0-P)\|_{L^\infty(Z_{r/2})}\\
 &\leq \frac{C}{r^{1+\nu}} \Big(\fint_{Z_r} |u^\lm_0-P|^2\Big)^{1/2}+ C r^{-\nu} \|(\widehat{A^\lm}-\widehat{A^\lm}(0,0)) \na P\|_{L^\infty(Z_r)}\\
 &\quad+ C \|(\widehat{A^\lm}-\widehat{A^\lm}(0,0)) \na P\|_{C^{\nu,\nu/2}(Z_r)}+Cr^{1-\nu}\Big(\fint_{Z_r}|F|^p\Big)^{1/p}+\frac{1}{  r^{1+\nu}}\|g-P\||_{C^{1+\al}(I_r)}\\
 &\leq  \frac{C}{r^{1+\nu}} \Big(\fint_{Z_r} |u^\lm_0-P|^2\Big)^{1/2} +C|\na P|+ Cr^{1-\nu}\Big(\fint_{Z_r}|F|^p\Big)^{1/p}+\frac{1}{  r^{1+\nu}}\|g-P\||_{C^{1+\al}(I_r)},
 \end{split}
\end{align*}
from which and \eqref{prb-liple-1},  the desired result follows for $\tau$ small enough.
\end{proof}
\begin{lemma}\label{b-liple-2}
Let $u^\lm_\va $ be a weak solution to $ \pa_t u^\lm_\va +\mathcal{L}^\lm_\va u^\lm_\va  =F $ in $Z_r$, and $u^\lm_\va =g$ on $I_r$ where $0<r\leq 1$, $F\in L^p(Z_r)$ for some $p>d+2$ and $g\in C^{1+\al}(I_r)$.
Let $\tau \in (0,1/4)$ be given by Lemma \ref{b-liple-1}. Then for any $(1+\sqrt{\lm})\va=\de\leq r\leq 1$, we have
\begin{align}\label{b-liple-2re}
\mathfrak{G}(\tau r; u^\lm_\va )\leq \frac{1}{2} \mathfrak{G}(r; u^\lm_\va ) +C \Big(\frac{\de}{r}\Big)^\sigma \frac{1}{r}  \Big\{\Big (\fint_{Z_{r}}  | u^\lm_\va   |^2  \Big )^{1/2} + r^2\Big (\fint_{Z_{r}}  |F |^p\Big )^{1/p}+ \|g\|_{C^{1+\al}(I_r)} \Big\}
\end{align}
for some $\sigma$ depending only on  $d$, $\mu$, and $\theta$ in \eqref{cod3}, where $C$ depends only on $d$, $\mu$, $p$, $\al$, $M_0$, and $(\theta,L)$ in \eqref{cod3}.
\end{lemma}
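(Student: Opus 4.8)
The plan is to follow the proof of Lemma~\ref{inliple-2} essentially verbatim, replacing the two interior ingredients by their boundary counterparts: Theorem~\ref{b-thmap} in place of Theorem~\ref{thmap}, and Lemma~\ref{b-liple-1} in place of Lemma~\ref{inliple-1}. This is exactly the scheme of Lemma~8.4 in \cite{shenan2017}. First I would fix $\de\le r\le1$ and dispose of the trivial case in which $\de/r$ is bounded below, since then \eqref{b-liple-2re} follows at once: the left side is always controlled by $\mathfrak{G}(r;u^\lm_\va)$ (up to a constant depending on $\tau$), and $\mathfrak{G}(r;u^\lm_\va)$ is in turn dominated, on taking $P=0$ in \eqref{b-gru}, by the bracketed quantity on the right, while $(\de/r)^{-\sigma}$ is bounded. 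In the remaining case I would invoke Theorem~\ref{b-thmap} (after the harmless rescaling that matches the reference cylinder, exactly as in the interior proof) to produce a solution $u^\lm_0$ of $\pa_t u^\lm_0+\mathcal{L}^\lm_0 u^\lm_0=F$ on the relevant cylinder, with $u^\lm_0=g$ on the corresponding curved boundary piece, such that
\begin{align*}
\Big(\fint|u^\lm_\va-u^\lm_0|^2\Big)^{1/2}&\le C\Big(\big(\tfrac{\de}{r}\big)^{\sigma}+\de^{\theta}L\Big)\\
&\quad\times\Big\{\Big(\fint_{Z_r}|u^\lm_\va|^2\Big)^{1/2}+r^2\Big(\fint_{Z_r}|F|^2\Big)^{1/2}+\|g\|_{C^{1+\al}(I_r)}\Big\}.
\end{align*}
Since $r\le1$ gives $\de^{\theta}L\le L(\de/r)^{\theta}$, after relabelling $\sigma:=\min\{\sigma,\theta\}$, absorbing $L$ into $C$, and bounding the $L^2$-average of $F$ by its $L^p$-average via H\"older, the error becomes $\le C(\de/r)^{\sigma}\big\{(\fint_{Z_r}|u^\lm_\va|^2)^{1/2}+r^2(\fint_{Z_r}|F|^p)^{1/p}+\|g\|_{C^{1+\al}(I_r)}\big\}$.

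The second step is the three-line comparison of the excess functional $\mathfrak{G}$. The key observation is that $u^\lm_\va$ and $u^\lm_0$ carry the same Dirichlet data $g$ on the curved boundary and the same forcing $F$, so in the definition \eqref{b-gru} the terms $\|g-P\|_{C^{1+\al}}$ and $(\fint|F|^p)^{1/p}$ are identical for the two functions and only the $L^2$-term is affected; hence for $\rho\in\{\tau r,r\}$,
\[
\big|\mathfrak{G}(\rho;u^\lm_\va)-\mathfrak{G}(\rho;u^\lm_0)\big|\le \frac{C}{\rho}\Big(\fint_{Z_\rho}|u^\lm_\va-u^\lm_0|^2\Big)^{1/2}\le \frac{C}{r}\Big(\fint_{Z_r}|u^\lm_\va-u^\lm_0|^2\Big)^{1/2},
\]
where in the last step the cylinder of integration is enlarged at the cost of a dimensional constant (using $\rho\ge\tau r$ with $\tau$ now a fixed number). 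Combining this with Lemma~\ref{b-liple-1}, which states $\mathfrak{G}(\tau r;u^\lm_0)\le\tfrac12\mathfrak{G}(r;u^\lm_0)$, I would chain
\begin{align*}
\mathfrak{G}(\tau r;u^\lm_\va)&\le \mathfrak{G}(\tau r;u^\lm_0)+\frac{C}{r}\Big(\fint_{Z_r}|u^\lm_\va-u^\lm_0|^2\Big)^{1/2}\\
&\le \tfrac12\mathfrak{G}(r;u^\lm_0)+\frac{C}{r}\Big(\fint_{Z_r}|u^\lm_\va-u^\lm_0|^2\Big)^{1/2}\\
&\le \tfrac12\mathfrak{G}(r;u^\lm_\va)+\frac{C}{r}\Big(\fint_{Z_r}|u^\lm_\va-u^\lm_0|^2\Big)^{1/2},
\end{align*}
and then insert the bound from the first step to obtain \eqref{b-liple-2re}.

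Since the whole argument is a transcription of the interior case, I do not expect a genuine obstacle. The points that need a little care, and hence the closest thing to a main difficulty, are: the bookkeeping of the various cylinders $Z_\rho$ on which $u^\lm_\va$ and $u^\lm_0$ must be defined so that each $\mathfrak{G}(\rho;\cdot)$ is meaningful (handled by the same rescaling as in the interior proof); the harmless absorption of the factor $\de^{\theta}L$ into a power of $\de/r$ via $r\le1$; and checking that $u^\lm_0$ actually inherits the boundary data $g$ on the curved piece throughout the smooth-approximation steps internal to the proof of Theorem~\ref{b-thmap}, since Lemma~\ref{b-liple-1} is applied to $u^\lm_0$ and its hypothesis requires precisely $u^\lm_0=g$ there.
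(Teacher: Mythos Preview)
Your proposal is correct and follows essentially the same approach as the paper, which simply states that the proof ``relies on Theorem~\ref{b-thmap} and Lemma~\ref{b-liple-1}, [and] is completely the same as \eqref{inliple-2}.'' Your added care about the boundary data---namely that $u^\lm_0=g$ on the curved piece, so the $\|g-P\|_{C^{1+\al}}$ term in $\mathfrak{G}$ is unchanged when comparing $u^\lm_\va$ with $u^\lm_0$---is exactly the one point that distinguishes the boundary case from the interior one and is handled as you indicate (the nested boundary conditions in the construction of Theorem~\ref{b-thmap} do propagate $g$ along the curved boundary at each stage).
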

\begin{proof}
The proof, which relies on Theorem \ref{b-thmap} and Lemma \ref{b-liple-1}, is completely the same as \eqref{inliple-2}.
\end{proof}

%%%%%%%%%%%%%%%%%%%%%%%%%%%%%%%%%%%%%%%%%%%%%%%%%%%%%%%%%%%%%%%%%%%%%%%%%%%%%%%%%

 \begin{theorem}\label{b-thmllip}
 Let $u^\lm_\va$ be a solution to $ \pa_t u^\lm_\va+\mathcal{L}^\lm_\va  u^\lm_\va=F $ in $Z_1$, and $u^\lm_\va=g$ on $I_1$, where $F\in L^p(Z_1)$ with $p> d+2$, and $g\in C^{1+\al} (I_1)$. Then for any $0<(1+\sqrt{\lm})\va\leq r<1$, we have
 \begin{align}\label{b-thmllipre}
 \Big(\fint_{Z_r}|\na u^\lm_\va|^2\Big)^{1/2}\leq C \Big\{\Big(\fint_{Z_1}\na u^\lm_\va|^2\Big)^{1/2} +\Big(\fint_{Z_1}|F|^p\Big)^{1/p}+\|g\|_{C^{1+\al}(I_1)} \Big\},
 \end{align}
 where $C$ depends only on $d$, $\mu$, $p$, $\al$, $M_0$ and $(\theta,L)$ in \eqref{cod3}.
\end{theorem}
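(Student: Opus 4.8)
Since $u^\lm_\va$ already solves the equation in $Z_1$ with boundary data $g$ on $I_1$, it solves it in every $Z_r$ with data $g|_{I_r}$, so Lemma \ref{b-liple-2} applies at each scale. The plan is then to follow the scheme of the interior estimate Theorem \ref{thmllip} (equivalently Theorem 6.1 of \cite{gsamar2020}), transplanted to the boundary: I would feed the one-step improvement of Lemma \ref{b-liple-2} into the abstract iteration lemma of \cite{shenan2017} (Lemma 8.5). Concretely, with $h(r)=\mathfrak{G}(r;u^\lm_\va)$ from \eqref{b-gru} and $H(r)=r^{-1}\big\{(\fint_{Z_r}|u^\lm_\va|^2)^{1/2}+r^2(\fint_{Z_r}|F|^p)^{1/p}+\|g\|_{C^{1+\al}(I_r)}\big\}$, the function $H$ is almost increasing (a routine check using $p>d+2$ and the scale-invariance of $\|\cdot\|_{C^{1+\al}}$), and Lemma \ref{b-liple-2} has precisely the form $h(\tau r)\le\tfrac12 h(r)+C(\de/r)^\sigma H(r)$ with $\de=(1+\sqrt{\lm})\va$ and $\sigma>0$; since $\int_0^1 t^{\sigma-1}\,dt<\infty$, the iteration lemma yields
\[
\sup_{\de\le r<1}\mathfrak{G}(r;u^\lm_\va)\le C\Big\{\mathfrak{G}(1;u^\lm_\va)+\Big(\fint_{Z_1}|u^\lm_\va|^2\Big)^{1/2}+\Big(\fint_{Z_1}|F|^p\Big)^{1/p}+\|g\|_{C^{1+\al}(I_1)}\Big\}.
\]

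Next I would unwind this into \eqref{b-thmllipre}. Bounding $\mathfrak{G}(1;u^\lm_\va)$ by taking $P\equiv0$ in \eqref{b-gru}, and then using the Poincar\'e inequality for $u^\lm_\va-g^*$ (with $g^*$ a fixed extension of $g$ to $Z_1$ satisfying $\|\na g^*\|_{L^\infty(Z_1)}\le C\|g\|_{C^{1+\al}(I_1)}$, so that $u^\lm_\va-g^*=0$ on $I_1$) to replace $(\fint_{Z_1}|u^\lm_\va|^2)^{1/2}$ by $(\fint_{Z_1}|\na u^\lm_\va|^2)^{1/2}+C\|g\|_{C^{1+\al}(I_1)}$, the right-hand side above becomes the one in \eqref{b-thmllipre}. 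To recover the gradient average at scale $r$, for each dyadic $r\in[\de,1)$ one picks a near-minimizing affine $P_r=E_r\cdot x+b_r$ in \eqref{b-gru}; comparing $P_r$ and $P_{2r}$ on $Z_r$ and summing the (geometrically controlled) increments gives $\sup_{\de\le r<1}|E_r|\le C\sup_{\de\le t<1}\mathfrak{G}(t;u^\lm_\va)$, and then the boundary Caccioppoli inequality for $u^\lm_\va-P_r$ on $Z_{r/2}\subset Z_r$ — using $u^\lm_\va-P_r=g-P_r$ on $I_r$ with $\|g-P_r\|_{C^{1+\al}(I_r)}\le Cr\,\mathfrak{G}(r;u^\lm_\va)$ — yields $(\fint_{Z_{r/2}}|\na u^\lm_\va|^2)^{1/2}\le |E_r|+(\fint_{Z_{r/2}}|\na(u^\lm_\va-P_r)|^2)^{1/2}\le C\sup_{\de\le t<1}\mathfrak{G}(t;u^\lm_\va)+Cr(\fint_{Z_r}|F|^p)^{1/p}$. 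Scales $r$ near $1$ are covered directly by Caccioppoli.

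The construction therefore contains no genuinely new analytic difficulty: the substance — the quantitative, $\lm$-uniform approximation of $u^\lm_\va$ by a solution of the homogenized equation — is entirely in Theorem \ref{b-thmap} and its consequences Lemmas \ref{b-liple-1}--\ref{b-liple-2}, and what remains is bookkeeping. The point I would be most careful about is the passage from the Campanato-type functional $\mathfrak{G}$ to the gradient average near the boundary — in particular, carrying the inhomogeneous term $\|g\|_{C^{1+\al}}$ through both the iteration and the boundary Caccioppoli estimate in a scale-invariant manner — and, accordingly, checking that the hypotheses of Lemma 8.5 of \cite{shenan2017} (the almost-monotonicity of $H$ and the Dini condition on $\omega(t)=t^\sigma$) are verified with the extra boundary datum present.
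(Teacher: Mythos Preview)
Your proposal is correct and follows exactly the route the paper takes: the paper's own proof simply says that, parallel to Theorem~\ref{thmllip}, one feeds Lemma~\ref{b-liple-2} into the abstract iteration Lemma~8.5 of \cite{shenan2017} and omits all details. You have in fact supplied more of the bookkeeping (the almost-monotonicity of $H$, the unwinding of $\mathfrak{G}$ via the near-minimizing $P_r$, and the boundary Caccioppoli/Poincar\'e steps) than the paper does.
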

\begin{proof}
Parallel to Theorem \ref{thmllip}, the proof is based on Lemma \ref{b-liple-2} and  Lemma 8.5 in \cite{shenan2017}. Let us  omit the details.
\end{proof}

In the case $\kappa=\va$, we can take $\lm=1$ in Theorem \ref{b-thmllip} and derive the boundary Lipschitz estimate for $\pa_t+\mathcal{L}_\va=\pa_t-\text{div} (A(x,t,x/\va,t/\va^2)\na )$ uniform down to scale $\va$. Indeed, let $u_\va$ be a weak solution to $ \pa_t u_\va +\mathcal{L}_\va u_\va  =F $ in $Z_1$, and $u_\va=g$ on $I_1$, where $F\in L^p(Q_1), p> d+2$, and $g\in C^{1+\al}(I_1), 1<\al<1$. Then, similar to \eqref{thmllipre'}, we have
\begin{align}\label{b-thmllipre}
 \Big(\fint_{Z_r}|\na u_\va|^2\Big)^{1/2}\leq C \Big\{\Big(\fint_{Z_1}\na u_\va|^2\Big)^{1/2} +\Big(\fint_{Z_1}|F|^p\Big)^{1/p}+\|g\|_{C^{1+\al}(I_1)} \Big\}.
 \end{align} for any $\va\leq r<1$.
 Under additional smoothness assumption on $A$, one can derive the uniform boundary Lipschitz estimates in the small scales for $\pa_t+\mathcal{L}_\va$.
\begin{theorem} \label{b-lip}
Assume that $A(x,t,y,s)$ satisfies \eqref{cod1}, \eqref{cod2} and \eqref{cod33}.
Let $u_\va$ be a solution to $ \pa_t u_\va-\text{div}(A(x,t,x/\va,t/\va^2)\na u_\va)=F $ in $Z_1$, and $u_\va=g$ on $I_1$, where $F\in L^p(Z_1)$ with $p> d+2$, and $g\in C^{1+\al} (I_1)$.
Then for any $0<\va<\infty$,
 \begin{align}\label{b-pointlipre}
|\na u_\va(0,0)| \leq C \Big\{\Big(\fint_{Z_1}|\na u_\va|^2\Big)^{1/2} +\Big(\fint_{Z_1}|F|^p\Big)^{1/p} +\|g\|_{C^{1+\al}(I_1)}\Big\},
 \end{align}
 where $C$ depends only on $d$, $\mu$, $p$, $\al$, $M_0$, and $(\vartheta,M)$ in \eqref{cod33}.
\end{theorem}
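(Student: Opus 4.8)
The plan is to follow the scheme of Theorem \ref{pointlip}, splitting into the non-oscillatory regime $\va\ge 1/4$ and the genuinely oscillatory regime $0<\va<1/4$; in the latter case the strategy is to rescale to unit scale, apply the classical boundary Schauder (Lipschitz) estimate for parabolic systems with H\"older coefficients, and then feed the result into the large-scale boundary estimate \eqref{b-thmllipre}. When $\va\ge1/4$, the joint H\"older condition \eqref{cod33} forces $(x,t)\mapsto A(x,t,x/\va,t/\va^2)$ to be uniformly H\"older in $(x,t)$, since the map $(x,t)\mapsto(x,t,x/\va,t/\va^2)$ is Lipschitz with constant $\le5$; hence \eqref{b-pointlipre} is immediate from the classical boundary Schauder estimate on $C^{1,\al}$ graph domains.

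For $0<\va<1/4$, I would set $w_\va(x,t)=\va^{-1}u_\va(\va x,\va^2 t)$, $\psi_\va(x')=\va^{-1}\psi(\va x')$ and $\wt g(x,t)=\va^{-1}g(\va x,\va^2 t)$. One checks $\psi_\va(0)=0$ and $\|\psi_\va\|_{C^{1,\al}}\le M_0$ (the sup norm of $\na\psi_\va$ is unchanged and its $C^\al$ seminorm picks up a harmless factor $\va^\al\le1$), that $Z(1,\psi_\va)$ sits inside the dilated domain, and that
\begin{align*}
\pa_t w_\va-\text{div}\big(A(\va x,\va^2 t,x,t)\na w_\va\big)=\va F(\va x,\va^2 t)\ \ \text{in }Z(1,\psi_\va),\qquad w_\va=\wt g\ \ \text{on }I(1,\psi_\va).
\end{align*}
By \eqref{cod33} and $\va<1$, the coefficient $(x,t)\mapsto A(\va x,\va^2 t,x,t)$ is uniformly H\"older in $(x,t)$ with constant $\le CM$.

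I would then invoke the gradient form of the classical boundary Schauder estimate: subtracting the boundary value $w_\va(0,0)=\wt g(0,0)$ at the vertex $(0,0)$ of the patch, on the unit-size set $\|\wt g-\wt g(0,0)\|_{C^{1+\al}(I(1,\psi_\va))}$ is controlled by $\|\na_{tan}\wt g\|_{L^\infty}+[\na_{tan}\wt g]_{C^{\al,\al/2}}+[\wt g]_{C_t^{(1+\al)/2}}$ (all seminorms over $I(1,\psi_\va)$), giving
\begin{align*}
|\na w_\va(0,0)|\le C\Big\{\Big(\fint_{Z(1,\psi_\va)}|\na w_\va|^2\Big)^{1/2}+\Big(\fint_{Z(1,\psi_\va)}|\va F(\va x,\va^2 t)|^p\Big)^{1/p}+\|\na_{tan}\wt g\|_{L^\infty}+[\na_{tan}\wt g]_{C^{\al,\al/2}}+[\wt g]_{C_t^{(1+\al)/2}}\Big\}.
\end{align*}
Unscaling: $\na w_\va(0,0)=\na u_\va(0,0)$; the first term is $(\fint_{Z_\va}|\na u_\va|^2)^{1/2}$ with $Z_\va=Z(\va,\psi)$; the forcing term is $\va^{1-(d+2)/p}(\fint_{Z_1}|F|^p)^{1/p}\le(\fint_{Z_1}|F|^p)^{1/p}$ since $p>d+2$; and in each gradient-type norm of $\wt g$ the prefactor $\va^{-1}$ cancels the chain-rule factor, leaving $\|\na_{tan}g\|_{L^\infty(I_\va)}+\va^\al[\na_{tan}g]_{C^{\al,\al/2}(I_\va)}+\va^\al[g]_{C_t^{(1+\al)/2}(I_\va)}\le\|g\|_{C^{1+\al}(I_1)}$. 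Hence
\begin{align*}
|\na u_\va(0,0)|\le C\Big\{\Big(\fint_{Z_\va}|\na u_\va|^2\Big)^{1/2}+\Big(\fint_{Z_1}|F|^p\Big)^{1/p}+\|g\|_{C^{1+\al}(I_1)}\Big\},
\end{align*}
and applying the large-scale boundary Lipschitz estimate \eqref{b-thmllipre} for $\pa_t+\mathcal{L}_\va$ with $r=\va$ bounds the first term on the right by the right-hand side of \eqref{b-pointlipre}, which finishes the proof.

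The genuinely routine parts are the scaling bookkeeping; the point that requires care is formulating the classical boundary Schauder estimate in the gradient form above — subtracting the value of the solution at the vertex of the patch so that no $L^\infty$ norm of $g$ (which scales like $\va^{-1}$ under the dilation and would therefore be useless) enters the right-hand side — together with the verification that the dilated graph $\psi_\va$ retains its $C^{1,\al}$ character with a constant uniform in $\va$.
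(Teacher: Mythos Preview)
Your proposal is correct and follows essentially the same approach as the paper, which simply states that the proof is the standard blow-up argument of Theorem~\ref{pointlip} and omits the details. In fact you supply the boundary-specific details the paper leaves implicit---the uniform $C^{1,\al}$ control of the rescaled graph $\psi_\va$ and the observation that one must use the gradient form of the Schauder estimate so that only the seminorms of $\wt g$ (which scale correctly) enter, not $\|\wt g\|_{L^\infty}$---and these are handled correctly.
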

\begin{proof}
The proof, based on standard blow-up argument, is the same as that for Theorem \ref{pointlip}. We therefore omit the details.
\end{proof}

As direct consequences of Theorem \ref{b-lip}, the following two corollaries are completely parallel to Corollaries  \ref{coro1} and \ref{coro2}.
\begin{coro}\label{b-coro1} Let $A=A(x,t,s)$ be 1-periodic in $s$ and satisfy \eqref{cod1} and \eqref{coro1con}.
Let $v_\va$ be a weak solution to $\pa_t v_\va -\text{div} (A(x,t,t/\va) \na v_\va)=F$ in $Z_1$, and $v_\va=g$ on $I_1$, where $F\in L^p(Z_1), p> d+2$ , and $g\in C^{1+\al} (I_1)$.
Then for any $0<\va <\infty,$
\begin{align*}
|\na v_\va(0,0)| \leq C \Big\{\Big(\fint_{Z_1}|\na v_\va|^2\Big)^{1/2} +\Big(\fint_{Z_1}|F|^p\Big)^{1/p}+\|g\|_{C^{1+\al}(I_1)} \Big\},
 \end{align*}
 where $C$ depends only on $d$, $\mu$, $p$, $\al$, $M_0$, and  $(\theta, L)$ in \eqref{coro1con}.
\end{coro}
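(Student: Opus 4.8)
The plan is to derive Corollary \ref{b-coro1} from Theorem \ref{b-lip} by exchanging the roles played by the fast spatial and temporal scales. Observe that $\pa_t-\text{div}(A(x,t,t/\va)\na\,)$ is a particular instance of the self-similar locally periodic operator treated in Theorem \ref{b-lip}, namely one whose coefficient happens to be independent of the fast spatial variable: setting
$$\wt A(x,t,y,s):=A(x,t,s)\qquad\text{and}\qquad \va':=\sqrt{\va},$$
one has $\wt A(x,t,x/\va',t/(\va')^2)=A(x,t,t/\va)$, so that $v_\va$ is a weak solution of $\pa_t v_\va-\text{div}\big(\wt A(x,t,x/\va',t/(\va')^2)\na v_\va\big)=F$ in $Z_1$ with $v_\va=g$ on $I_1$.

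First I would verify that $\wt A$ meets the hypotheses \eqref{cod1}, \eqref{cod2} and \eqref{cod33} of Theorem \ref{b-lip}. Ellipticity and the $L^\infty$ bound \eqref{cod1} are inherited from $A$ with the \emph{same} constant $\mu$, since $\wt A(x,t,\cdot\,,s)\equiv A(x,t,s)$; and \eqref{cod2} holds because $A$ is $1$-periodic in $s$ while $\wt A$ does not depend on $y$, so $\wt A$ is $1$-periodic in $(y,s)$. For \eqref{cod33} with $(\vartheta,M)=(\theta,L)$, note that for all admissible arguments
\begin{align*}
|\wt A(x,t,y,s)-\wt A(x',t',y',s')|&=|A(x,t,s)-A(x',t',s')|\\
&\le L\big(|x-x'|+|t-t'|^{1/2}+|s-s'|^{1/2}\big)^\theta\\
&\le L\big(|x-x'|+|y-y'|+|t-t'|^{1/2}+|s-s'|^{1/2}\big)^\theta,
\end{align*}
where the first inequality is \eqref{coro1con} and the second uses that $u\mapsto u^\theta$ is nondecreasing on $[0,\infty)$ together with $|y-y'|\ge0$.

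Finally I would invoke Theorem \ref{b-lip} for $v_\va$ with $\wt A$ in place of $A$ and $\va'$ in place of $\va$, obtaining
$$|\na v_\va(0,0)|\le C\Big\{\Big(\fint_{Z_1}|\na v_\va|^2\Big)^{1/2}+\Big(\fint_{Z_1}|F|^p\Big)^{1/p}+\|g\|_{C^{1+\al}(I_1)}\Big\}$$
for every $\va'>0$, with $C$ depending only on $d$, $\mu$, $p$, $\al$, $M_0$ and $(\vartheta,M)=(\theta,L)$; since $\va\mapsto\va'=\sqrt{\va}$ is a bijection of $(0,\infty)$ onto itself, this is precisely the asserted estimate for all $0<\va<\infty$, with the stated dependence of $C$. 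There is essentially no genuine obstacle beyond the hypothesis check above; the only point requiring a little care is to confirm that passing to a $y$-independent coefficient preserves both the joint H\"older regularity and the periodicity demanded by Theorem \ref{b-lip}. Should one prefer to avoid the reference altogether, one may instead repeat verbatim the blow-up argument used for Theorems \ref{b-lip} and \ref{pointlip}, now centered at the scale $\sqrt{\va}$ (via $w_\va(x,t)=\va^{-1/2}v_\va(\sqrt{\va}\,x,\va t)$, whose coefficient $A(\sqrt{\va}\,x,\va t,t)$ is uniformly H\"older when $\va\le1$), combining the large-scale boundary Lipschitz estimate of Theorem \ref{b-thmllip} (applied with $\lm=1$) with the classical boundary Schauder estimate for parabolic systems with H\"older-continuous coefficients on the $C^{1,\al}$ region $Z_r$.
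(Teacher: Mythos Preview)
Your proposal is correct and matches the paper's intended argument: the paper states that Corollaries \ref{b-coro1} and \ref{b-coro2} are ``direct consequences of Theorem \ref{b-lip}'' (parallel to how Corollaries \ref{coro1}--\ref{coro2} follow from Theorem \ref{pointlip}), and your reduction via $\wt A(x,t,y,s)=A(x,t,s)$ and $\va'=\sqrt{\va}$ is precisely the way to realize this. The hypothesis check you give for \eqref{cod1}, \eqref{cod2}, \eqref{cod33} is exactly what is needed, and the alternative blow-up route you sketch at the end is in fact the content of the proof of Theorem \ref{b-lip} itself.
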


\begin{coro}\label{b-coro2} Let $A=A(x,t,y)$ be 1-periodic in $y$ and satisfy \eqref{cod1} and \eqref{coro2con}.
Let $v_\va$ be a weak solution to $\pa_t v_\va -\text{div} (A(x,t,x/\va) \na v_\va)=F$ in $Z_1$, and $v_\va=g$ on $I_1$, where $F\in L^p(Z_1)$ for some $p> d+2$, and $g\in C^{1+\al} (I_1)$.
Then for any $0<\va <\infty,$
\begin{align}\label{b-coro2re}
|\na v_\va(0,0)| \leq C \Big\{\Big(\fint_{Z_1}|\na v_\va|^2\Big)^{1/2} +\Big(\fint_{Z_1}|F|^p\Big)^{1/p}+\|g\|_{C^{1+\al}(I_1)} \Big\},
 \end{align} where $C$ depends only on $d$, $\mu$, $p$, $\al$, $M_0$, and $(\theta, L)$ in \eqref{coro2con}.
\end{coro}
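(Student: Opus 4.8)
The plan is to deduce Corollary~\ref{b-coro2} directly from Theorem~\ref{b-lip} by regarding the spatially oscillating coefficient $A(x,t,y)$ as a coefficient on $\R^{2d+2}$ that happens to be independent of the fast time variable $s$. First I would set $\widetilde{A}(x,t,y,s):=A(x,t,y)$ for all $x,y\in\R^d$ and $t,s\in\R$. Then $\widetilde{A}$ satisfies \eqref{cod1} with the same constant $\mu$, and it is $1$-periodic in $(y,s)$: it is $1$-periodic in $y$ by hypothesis and constant in $s$, so \eqref{cod2} holds. Finally, for any $x,x',y,y'\in\R^d$ and $t,t',s,s'\in\R$, \eqref{coro2con} gives
\[
|\widetilde{A}(x,t,y,s)-\widetilde{A}(x',t',y',s')|=|A(x,t,y)-A(x',t',y')|\le L\big(|x-x'|+|y-y'|+|t-t'|^{1/2}\big)^\theta,
\]
and since the right-hand side is bounded by $L\big(|x-x'|+|y-y'|+|t-t'|^{1/2}+|s-s'|^{1/2}\big)^\theta$, the matrix $\widetilde{A}$ satisfies \eqref{cod33} with $(\vartheta,M)=(\theta,L)$.

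Next, because $\widetilde{A}$ does not depend on its last slot, $\widetilde{A}(x,t,x/\va,t/\va^2)=A(x,t,x/\va)$, so $v_\va$ is a weak solution of $\pa_t v_\va-\text{div}(\widetilde{A}(x,t,x/\va,t/\va^2)\na v_\va)=F$ in $Z_1$ with $v_\va=g$ on $I_1$. Applying Theorem~\ref{b-lip} to $v_\va$ with the matrix $\widetilde{A}$ yields \eqref{b-coro2re}, the bounding constant depending only on $d,\mu,p,\al,M_0$ and $(\vartheta,M)=(\theta,L)$, which is precisely the dependence asserted. Equivalently, one may replay the blow-up argument behind Theorem~\ref{b-lip}: for $0<\va<1/4$ the rescaled function $w_\va(x,t)=\va^{-1}v_\va(\va x,\va^2 t)$ solves a parabolic system with coefficient $A(\va x,\va^2 t,x)$, which is uniformly H\"older continuous in $(x,t)$ by \eqref{coro2con}; combining the boundary Schauder estimate for such systems on the $C^{1,\al}$ region $Z_1$ with the large-scale boundary Lipschitz bound (the case $\lm=1$ of Theorem~\ref{b-thmllip} for the $s$-independent coefficient, which gives a bound uniform down to scale $\va$) then produces the claim.

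I do not expect any genuine obstacle here. The only steps requiring (routine) care are the verification that \eqref{coro2con} upgrades to \eqref{cod33} for the extension $\widetilde{A}$, i.e.\ that enlarging the modulus on the right-hand side by the harmless term $|s-s'|^{1/2}$ does not spoil the inequality, and that the trivial dependence on $s$ is consistent with the periodicity requirement \eqref{cod2}. Once these are in place, the estimate follows verbatim from Theorem~\ref{b-lip}, exactly in parallel with the way Corollaries~\ref{coro1} and \ref{coro2} follow from Theorem~\ref{pointlip}.
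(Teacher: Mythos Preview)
Your proposal is correct and follows exactly the approach intended by the paper, which states that Corollary~\ref{b-coro2} is a direct consequence of Theorem~\ref{b-lip} (parallel to Corollary~\ref{coro2}). Your verification that the trivial extension $\widetilde{A}(x,t,y,s)=A(x,t,y)$ satisfies \eqref{cod1}, \eqref{cod2}, and \eqref{cod33} is precisely what is needed, and the optional blow-up argument you sketch is just the content of Theorem~\ref{b-lip} specialized to this case.
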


\section{ Proof of Theorems \ref{thm1} and \ref{b-thm2}}

We are now ready to prove the uniform Lipschitz estimates for the operator $$\pa_t+\mathfrak{L}_\va=\pa_t -\text{div}(A(x,t,x/\va,t/\kappa^2) \na), \quad \quad 0< \va,\kappa<1.$$

\begin{proof}[\textbf{Proof of Theorem 1.1}] By translation, it suffices to consider the case $(x_0,t_0)=(0,0).$
Let $u_\va$ be a weak solution of $\pa_t u_\va+\mathfrak{L}_\va u_\va=F$ in $Q_1=Q_1(0,0)$  with $F\in L^p(Q_1 ), p>d+2.$ Note that $\pa_t+\mathfrak{L}_\va=\pa_t +\mathcal{L}_\va^\lm $ and $(1+\sqrt{\lm})\va=\va+\kappa$ when $\lm=\kappa^2/\va^2$.
By taking $\lm=\kappa^2/\va^2$ in Theorem \ref{thmllip}, it follows that for any $ \va+\kappa \leq r<  1,$
\begin{align}\label{pthm1-1}
\Big(\fint_{Q_r }|\na u_\va|^2\Big)^{1/2}\leq C \Big\{\Big(\fint_{Q_1}|\na u_\va|^2\Big)^{1/2} + \Big(\fint_{Q_1 }|F|^p\Big)^{1/p} \Big\},
\end{align}
which is exactly \eqref{thm1-re1}.

To verify \eqref{thm1-re2}, we may assume that  $0<\va,\kappa<c_0$ for some small constant $c_0\in (0,1)$, for otherwise we have $\va\geq c_0, \kappa \geq c_0$; $\va\geq c_0, \kappa<c_0$; or $\kappa\geq c_0, \va<c_0$.
For the first case, the coefficient $A(x, t, x/\va, t/\ka^2)$ is uniformly H\"{o}lder continuous in $(x,t)$. One can derive \eqref{thm1-re2} from the standard Lipschitz estimates for parabolic equations with H\"{o}lder continuous coefficients. For the other two cases, the operator $\pa_t+\mathfrak{L}_\va$ simply  reduces to the operators with only temporal or spatial oscillations considered in Corollaries \ref{coro1} and \ref{coro2}, from which \eqref{thm1-re2} follows directly. Therefore, it suffices to consider the case $0<\va,\kappa<c_0$. Let $\rho$ be given by \eqref{ratio}. We divide the remaining proof into three cases: $\rho=0$; $0<\rho<\infty$; and $\rho=\infty$.

\noindent{\textbf{Case 1:} $0<\rho<\infty$.}
Assume that $(\rho/2)\leq \kappa/\va\leq 2\rho$ for $0<\va<c_0<1$. Then estimate \eqref{pthm1-1} implies that
\begin{align}\label{pthm1-2}
\Big(\fint_{Q_{\va}}|\na u_\va|^2\Big)^{1/2}\leq C \Big\{\Big(\fint_{Q_1}|\na u_\va|^2\Big)^{1/2} +  \Big(\fint_{Q_1}|F|^p\Big)^{1/p} \Big\},
\end{align} where $C$ depends only on $d$, $\mu$, $p$, $\rho$,  and $(\theta,L)$ in \eqref{cod3}.
 Let $w(x,t)=\va^{-1} u_\va(\va x, \va^2 t)$, it follows that
\begin{align*}
\pa_t w -\text{div} (A( \va x, \va^2t, x, t\va^2/\ka^2 ) \na w) =\widetilde{F} \quad\,\,\text{in } Q_{\va^{-1}},
\end{align*} where $\widetilde{F}(x,t)= \va F(\va x, \va^2t).$
Note that  $(\rho/2)\leq \kappa/\va\leq 2\rho$, \eqref{cod33} implies that $\mathcal{A} (x,t)=A( \va x, \va^2t, x, t\va^2/\ka^2 ) $ is uniformly H\"{o}lder continuous in $(x,t)$. Thanks to the standard Lipshchitz estimates of parabolic equations with H\"{o}lder continuous coefficients,
 \begin{align}\label{pthm1-2-1}
\begin{split}
|\na u_\va(0,0)|&=|\na w (0,0)| \leq C \Big\{\Big(\fint_{Q_1}|\na w|^2\Big)^{1/2} +\Big(\fint_{Q_1}| \widetilde{ F} |^p\Big)^{1/p} \Big\}\\
&\leq C \Big\{\Big(\fint_{Q_{\va }}|\na u_\va |^2\Big)^{1/2} +   \va^{1-\frac{d+2}{p}}\Big(\fint_{Q_1}|  F |^p\Big)^{1/p} \Big\}
\end{split}
 \end{align} for some positive constant $C$ depending only on $d$, $\mu$, $p$, $\rho$, and $(\theta,L)$ in \eqref{cod33},
from which and \eqref{pthm1-2},  the desired estimate \eqref{thm1-re2} follows directly.

\noindent{\textbf{Case 2:} $\rho=0$.}  In this case, $\va+\kappa\simeq \va$ for $\va<c_0$. Thus \eqref{pthm1-1} also reduces to \eqref{pthm1-2}.
 Let $w(x,t)=\va^{-1} u_\va(\va x, \va^2 t)$, it follows that
\begin{align*}
\pa_t w -\text{div} (A( \va x, \va^2t, x, t/(\va')^2 ) \na w) =\widetilde{F} \quad\,\,\text{in } Q_{\va^{-1}},
\end{align*} where $\widetilde{F}(x,t)= \va   F(\va x, \va^2 t)$ and $\va'=(\kappa/\va)^2.$
 Note that $B(x,t,s)=A( \va x, \va^2t, x, s)$ satisfies the condition \eqref{coro1con}. We therefore conclude from
 Corollary \ref{coro1} that
\begin{align}\label{pthm1-3}
\begin{split}
|\na u_\va(0,0)|&=|\na w (0,0)| \leq C \Big\{\Big(\fint_{Q_1}|\na w|^2\Big)^{1/2} +\Big(\fint_{Q_1}| \widetilde{ F} |^p\Big)^{1/p} \Big\}\\
&\leq C \Big\{\Big(\fint_{Q_{\va}}|\na u_\va |^2\Big)^{1/2} + \va^{1-\frac{d+2}{p}}\Big(\fint_{Q_1}|  F |^p\Big)^{1/p} \Big\},
\end{split}
 \end{align}
which, together with \eqref{pthm1-2}, gives \eqref{thm1-re2}.

\noindent{\textbf{Case 3:} $\rho=\infty$.}
 Now  estimate \eqref{pthm1-1} reduces to
\begin{align}\label{pthm1-5}
\Big(\fint_{Q_\ka}|\na u_\va|^2\Big)^{1/2}\leq C \Big\{\Big(\fint_{Q_1}|\na u_\va|^2\Big)^{1/2} +  \Big(\fint_{Q_1}|F|^p\Big)^{1/p} \Big\}.
\end{align}
 By setting $w(x,t)=\ka^{-1} u_\va(\ka x, \ka^2 t)$, we get
\begin{align}\label{pthm1-7}
\pa_t w -\text{div} (A(\ka x, \ka^2 t, x/(\va/\ka)^2, t) \na w) =\widetilde{F} \quad\,\,\text{ in }  Q_{\ka^{-1}}
\end{align} with $\widetilde{F}(x,t)=\ka F(\ka x,\ka^2 t).$ Since $B(x,t,y)=A(\ka x,\ka^2 t,y, t)$ satisfies  \eqref{coro2con}.
Corollary \ref{coro2}, together with \eqref{pthm1-5}, implies that
\begin{align}\label{pthm1-8}
\begin{split}
|\na u_\va(0,0)|&=|\na w(0,0)|\leq C \Big\{\Big(\fint_{Q_1}|\na w|^2\Big)^{1/2} +\Big(\fint_{Q_1}|\widetilde{F}|^p\Big)^{1/p} \Big\}\\
& \leq C \Big\{\Big(\fint_{Q_{\ka }}|\na u_\va |^2\Big)^{1/2} + \ka^{1- \frac{d+2}{p}}\Big(\fint_{Q_1}|  F |^p\Big)^{1/p} \Big\}\\
&\leq C \Big\{\Big(\fint_{Q_1}|\na u_\va|^2\Big)^{1/2} +  \Big(\fint_{Q_1}|F|^p\Big)^{1/p} \Big\},
\end{split}
\end{align}which is exactly \eqref{thm1-re2}.
By combining the estimates from Case 1 to Case 3, one derives \eqref{thm1-re2} immediately. The proof is complete.
\end{proof}

\begin{proof}[\textbf{Proof of Theorem 1.2}] Since $\pa_t+\mathfrak{L}_\va=\pa_t +\mathcal{L}_\va^\lm $ and $(1+\sqrt{\lm})\va=\va+\kappa$ when $\lm=\kappa^2/\va^2$. The large scale estimate \eqref{b-thm2-re1} follows directly from \eqref{b-thmapre2} by setting $\lm=\ka^2/\va^2.$
The proof of \eqref{b-thm2-re2}, based on Corollaries \ref{b-coro1} and \ref{b-coro2}, is almost the same as the proof of  \eqref{thm1-re2}. We therefore omit the details.
\end{proof}

\section{Proof of Theorem \ref{cothm}}

To begin with, we consider the convergence rate for the initial-Dirichlet problem
\begin{equation}\label{eq71}
\pa_tu^\lm_\va +\mathcal{L}^\lm_\varepsilon u^\lm_\va=F   \,\, \,\,\text{in } \Omega_T, \quad\quad
u^\lm_\va   =g    \,\, \,\,\text{on } \partial_p \Omega_T,
\end{equation} where $\Omega$ is a bounded Lipschitz domain in $\R^d$, $\Omega_T=\Omega\times (0,T),$ and  $\pa_p \Omega_T$ is the parabolic boundary of $\Omega_T$.
Let $\pa_t+\mathcal{L}_0^\lm$ be the homogenized operator of $\pa_t+\mathcal{L}^\lm_\varepsilon$, and $u_0^\lm$ the solution to
\begin{equation}\label{heq71}
\pa_tu^\lm_0+\mathcal{L}^\lm_0 u^\lm_0=F   \,\, \,\,\text{in } \Omega_T, \quad\quad
u^\lm_0  =g    \,\, \,\,\text{on } \partial_p \Omega_T.
\end{equation}
Similar to \eqref{def-w}, define
\begin{align} \label{def-wwan}
\begin{split}
\widetilde{w}_\varepsilon^\lm &=u_\varepsilon^\lm-u_0^\lm -\varepsilon S_\de ( (\chi^\lm)^\varepsilon\na  u_0^\lm)\eta_\de   +\varepsilon^{2}  S_\de ( (\pa_{x_i }\mathfrak{B}^\lm_{i (d+1) j})^\varepsilon\pa_{x_j} u_0^\lm)\eta_\de \\
&\quad+ \varepsilon^{2}  S_\de ( (\mathfrak{B}^\lm_{i (d+1) j})^\varepsilon \pa_{x_i } \pa_{x_j} u_0^\lm)\eta_\de  + \varepsilon^{2}  S_\de ( (\mathfrak{B}^\lm_{i (d+1) j})^\varepsilon\pa_{x_j} u_0^\lm) \pa_{x_i } \eta_\de ,
 \end{split}
\end{align} where $f^\va(x, t)=f(x,t,x/\va, t/\va^2), \de=(1+\sqrt{\lm})\va$,  $\chi^\lm$ and $ \mathfrak{B}^\lm$ are the matrices of correctors and flux correctors defined in Section 2.
The cut-off function $\eta_\de   \in C_c^\infty(\R^{d+1})$ satisfies
\begin{align}\label{etade}
 \begin{split}
 &0\leq \eta_\de  \leq 1,\quad \text{and}\quad \eta_\de  =1  \, \text{ in } \Omega_{T}\!\setminus\!\Omega_{T,4\de},\,\, \eta_\de  =0 \,\text{ in } \Omega_{T,2\de}, \\
& |\na \eta_\de |\leq C \de^{-1}, \quad |\pa_t \eta_\de |+ |\nabla^2 \eta_\de  |\leq C \de^{-2},
\end{split}\end{align}
where $\Omega_{T, \de }$ denotes the (parabolic) boundary layer
\begin{equation}\label{layer}
\Omega_{T, \de} =
\left( \big\{ x\in \Omega: \, \text{\rm dist} (x, \partial\Omega)<\de\big\}
\times (0, T)\right)
\cup \big( \Omega \times (0, \de^2)\big)
\end{equation}
for $0< \de< c$.

\begin{theorem}\label{cothm71}
Assume that $A$ satisfies \eqref{cod1}, \eqref{cod2} and \eqref{cod3} with $\theta=1$. Let $\widetilde{w}_\va^\lm$ be defined as \eqref{def-wwan}. Then, for any $\psi\in L^2(0, T; H_0^1(\Omega))$,
\begin{equation}\label{cothm71re1}
\aligned
&\Big| \int_0^T \langle (\pa_t +\mathcal{L}^\lm_\va)  \widetilde{w}^\lm_\va, \psi\rangle_{H^{-1}(\Omega) \times H^1_0(\Omega)} dt   \Big| \\
& \le  C \big\{ \| u_0^\lambda \|_{L^2(0, T; H^2(\Omega))}
+ \|\partial_t u_0^\lambda\|_{L^2(\Omega_T)} \big\}
\big\{ \delta  \|\nabla \psi \|_{L^2(\Omega_T)}
+ \delta^{1/2}
\| \nabla \psi\|_{L^2(\Omega_{T, 5\delta})}
\big\},
\endaligned
\end{equation} where $C$ depends only on $d$, $\mu$, $L$, $\Omega$ and $T$.
In particular,
\begin{align}\label{cothm71re2}
\|\na \widetilde{w}_\va^\lm\|_{L^2(\Omega_T)} &\leq  C \de^{1/2}\big (\|\na^2 u_0^\lm\|_{L^2(\Omega_T)} +\|\pa_t u_0^\lm\|_{L^2(\Omega_T)}+\|\na  u\|_{L^2(\Omega_T)}\big).
\end{align}
\end{theorem}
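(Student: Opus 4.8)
The plan is to mirror the proof of Lemma~\ref{leap-2} (and Lemma~\ref{leap-3}) on $\Omega_T$, using the boundary‑layer cut‑off \eqref{etade} in place of the interior cut‑off \eqref{rho}, and then to absorb the fact that $\theta=1$ only gives $1/2$‑Hölder regularity of $A$ in $t$ by the smoothing approximation of Theorem~\ref{thmap}. Since $\widetilde{w}^\lm_\va$ in \eqref{def-wwan} has exactly the same form as $w^\lm_\va$ in \eqref{def-w}, the algebraic identity \eqref{leap-1re} holds verbatim for $(\pa_t+\mathcal{L}^\lm_\va)\widetilde{w}^\lm_\va$ (it is insensitive to the domain). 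Pairing this identity with $\psi\in L^2(0,T;H^1_0(\Omega))$ and integrating over $\Omega_T$ yields, as in \eqref{pl351}, a sum $I_1+\cdots+I_{12}$, which I would split into two groups.

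The first group consists of all pieces carrying a factor $1-\eta_\de$, $\na\eta_\de$ or $\pa_t\eta_\de$: by \eqref{etade} these are supported in $\Omega_{T,4\de}$, hence (after the $\de$‑spreading of $S_\de$) in $\Omega_{T,5\de}$. Using $\va\de^{-1}\le1$, $\va^2\de^{-2}\le1$ together with the cell estimates \eqref{lecore0}, \eqref{huab2}--\eqref{huab3} for $\chi^\lm,\mathfrak{B}^\lm$ through Lemma~\ref{le2.0}, these are bounded by $C\big(\|\na u^\lm_0\|_{L^2(\Omega_{T,5\de})}+\de\|\na^2 u^\lm_0\|_{L^2(\Omega_{T,5\de})}+\de\|\pa_t u^\lm_0\|_{L^2(\Omega_{T,5\de})}\big)\|\na\psi\|_{L^2(\Omega_{T,5\de})}$; a boundary‑layer trace bound for $u^\lm_0\in L^2(0,T;H^2(\Omega))\cap H^1(0,T;L^2(\Omega))$, namely that the left parenthesis is $\le C\de^{1/2}\{\|u^\lm_0\|_{L^2(0,T;H^2(\Omega))}+\|\pa_t u^\lm_0\|_{L^2(\Omega_T)}\}$, then produces the term $\de^{1/2}\|\na\psi\|_{L^2(\Omega_{T,5\de})}$ of \eqref{cothm71re1}. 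The second group consists of the pieces carrying $\eta_\de$; following \eqref{pl316} (with Lemma~\ref{le2.2} and \eqref{o1o1}--\eqref{o1o2}) they are bounded by $C\de\big(1+\|\na_x A^\lm\|_\infty+\de\|\pa_t A^\lm\|_\infty\big)\big(\|\na^2 u^\lm_0\|_{L^2(\Omega_T)}+\|\pa_t u^\lm_0\|_{L^2(\Omega_T)}+\|\na u^\lm_0\|_{L^2(\Omega_T)}\big)\|\na\psi\|_{L^2(\Omega_T)}$. To make this coefficient genuinely $O(\de)$ when $\theta=1$, I would first prove everything for coefficients smooth in $(x,t)$ and then replace $A^\lm$ by $\widetilde{A^\lm}=S_\de((A^\lm)^\va)$ exactly as in Theorem~\ref{thmap}: since $\|A^\lm-\widetilde{A^\lm}\|_\infty\le CL\de$, $\|\na_x\widetilde{A^\lm}\|_\infty\le CL$, $\|\pa_t\widetilde{A^\lm}\|_\infty\le CL\de^{-1}$, one gets $\de(1+\|\na_x\widetilde{A^\lm}\|_\infty+\de\|\pa_t\widetilde{A^\lm}\|_\infty)\le CL\de$, and the discrepancy caused by replacing $A^\lm$ by $\widetilde{A^\lm}$ in the equation, the correctors and $u^\lm_0$ is again $O(\de)$ by the energy estimates of \eqref{leap4-3}--\eqref{leap4-6}. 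Collecting the bounds gives \eqref{cothm71re1}.

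For \eqref{cothm71re2} I would take $\psi=\widetilde{w}^\lm_\va$: since $\eta_\de\equiv0$ on $\Omega_{T,2\de}$, all corrector terms in \eqref{def-wwan} vanish on a neighbourhood of $\pa_p\Omega_T$, so $\widetilde{w}^\lm_\va\in L^2(0,T;H^1_0(\Omega))$ and $\widetilde{w}^\lm_\va(\cdot,0)=0$; hence $\int_0^T\langle\pa_t\widetilde{w}^\lm_\va,\widetilde{w}^\lm_\va\rangle\,dt=\tfrac12\|\widetilde{w}^\lm_\va(\cdot,T)\|_{L^2(\Omega)}^2\ge0$ and the ellipticity \eqref{cod1} give $\mu\|\na\widetilde{w}^\lm_\va\|_{L^2(\Omega_T)}^2\le\big|\int_0^T\langle(\pa_t+\mathcal{L}^\lm_\va)\widetilde{w}^\lm_\va,\widetilde{w}^\lm_\va\rangle\,dt\big|$. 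Then \eqref{cothm71re1}, together with $\|\na\widetilde{w}^\lm_\va\|_{L^2(\Omega_{T,5\de})}\le\|\na\widetilde{w}^\lm_\va\|_{L^2(\Omega_T)}$ and $\de\le\de^{1/2}$, lets me divide by $\|\na\widetilde{w}^\lm_\va\|_{L^2(\Omega_T)}$ and conclude, after bounding $\|u^\lm_0\|_{L^2(0,T;H^2(\Omega))}$ by $\|\na^2 u^\lm_0\|_{L^2(\Omega_T)}+\|\na u^\lm_0\|_{L^2(\Omega_T)}$ plus lower‑order terms absorbed via Poincaré/elliptic regularity.

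The main obstacle is the bookkeeping: correctly assigning each of the twelve terms either to the ``$\de\|\na\psi\|_{L^2(\Omega_T)}$'' bucket or to the ``$\de^{1/2}\|\na\psi\|_{L^2(\Omega_{T,5\de})}$'' bucket — in particular tracking how $\eta_\de$, $\na\eta_\de$, $\pa_t\eta_\de$ confine terms to the boundary layer — while at the same time making sure that after the $\va$‑smoothing of the coefficients (forced by $\theta=1$) every factor $\de\|\na_x A^\lm\|_\infty$ and $\de^2\|\pa_t A^\lm\|_\infty$ remains $O(\de)$.
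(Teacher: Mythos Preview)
Your proposal is correct and follows the paper's own route: carry the identity \eqref{leap-1re} and the term-by-term estimates of Lemma~\ref{leap-2} over to $\Omega_T$ with the boundary-layer cut-off \eqref{etade}, combine with the layer bound $\|\na u_0^\lm\|_{L^2(\Omega_{T,5\de})}\le C\de^{1/2}\{\|u_0^\lm\|_{L^2(0,T;H^2)}+\|\pa_t u_0^\lm\|_{L^2}\}$ (which the paper quotes from \cite{gsamar2020}), and then set $\psi=\widetilde w^\lm_\va$ together with $\int_0^T\langle\pa_t\widetilde w^\lm_\va,\widetilde w^\lm_\va\rangle\,dt\ge 0$ to obtain \eqref{cothm71re2}.

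The only methodological difference is how you absorb the hypothesis $\theta=1$. The paper simply writes the analogue of \eqref{pl316} on $\Omega_T$ with a constant depending on $L$; implicitly every occurrence of $\de^2\|\pa_t A^\lm\|_\infty$ in the proof of Lemma~\ref{leap-2} is replaced by the $C^{1/2}_t$-bound $L\de$ that \eqref{cod3} with $\theta=1$ supplies directly. You instead propose a $\de$-smoothing of the coefficients \`a la Theorem~\ref{thmap}. That is legitimate, but be aware it is heavier than advertised: once you replace $A^\lm$ by $\widetilde{A^\lm}$ you also perturb $\chi^\lm$, $\mathfrak{B}^\lm$, $\widehat{A^\lm}$ and $u^\lm_0$, hence $\widetilde w^\lm_\va$ itself, and the statement \eqref{cothm71re1} concerns the \emph{original} $\widetilde w^\lm_\va$; you therefore need a crude a~priori bound $\|\na\widetilde w^\lm_\va\|_{L^2(\Omega_T)}\le C\|u^\lm_0\|_{L^2(0,T;H^2)}$ (which does follow from $\|\na u^\lm_\va\|\le C\|\na u^\lm_0\|$ and Lemma~\ref{le2.0}) to close the comparison without circularity.
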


\begin{proof}
Performing similar analysis as in Lemma \ref{leap-2},  parallel to \eqref{pl316}, it is not difficult to prove that
\begin{align}\label{plcothm71-1}
\begin{split}
& \Big| \int_0^T\big\langle (\pa_t +\mathcal{L}^\lm_\va)  \widetilde{w}^\lm_\va, \psi\rangle_{H^{-1}(\Omega) \times H^1_0(\Omega)}  dt   \Big|\\
  &\leq C  \|\na u^\lm_0\|_{L^2(\Omega_{T,5\de})} \|\na \psi\|_{L^2(\Omega_{T,5\de})}+ C\de   \|\na u^\lm_0\|_{L^2(\Omega_T)} \|\na \psi\|_{L^2(\Omega_T)}\\
 &\quad+C\de   \big\{ \|\na^2 u^\lm_0\|_{L^2( \Omega_T)} +\|\pa_t u^\lm_0\|_{L^2( \Omega_T)}  \big\} \|\na \psi\|_{L^2(\Omega_T)}.
 \end{split}
\end{align}
Thanks to Lemma 7.1 in \cite{gsamar2020}, we know that
\begin{align}\label{plcothm71-2}
\|\na u\|_{L^2(\Omega_{T,\de})}\leq C \de^{1/2} \big\{ \|\na^2 u \|_{L^2( \Omega_T)} +\|\pa_t u\|_{L^2( \Omega_T)} +\|\na  u\|_{L^2(\Omega_T)} \big\},
\end{align}
which, together with \eqref{plcothm71-1}, gives \eqref{cothm71re1}. Note that $\int_0^T\big\langle \pa_t   \widetilde{w}^\lm_\va, \widetilde{w}^\lm_\va \rangle_{H^{-1}(\Omega) \times H^1_0(\Omega)}  dt\geq 0.$ By taking $\psi=\widetilde{w}_\va^\lm$ in \eqref{plcothm71-1}, and using \eqref{plcothm71-2}, one gets \eqref{cothm71re2} immediately.
The proof is complete.
\end{proof}

\begin{theorem}\label{cothm72}
Assume that $A$ satisfies  \eqref{cod1},  \eqref{cod2} and  \eqref{cod3} with $\theta=1$.
Let $\Omega$ be a bounded $C^{1, 1}$ domain in $\mathbb{R}^d$.
Let $u^\lm_\va, u^\lm_0$ be respectively the solutions to \eqref{eq71} and \eqref{heq71}.
Then
\begin{equation}\label{cothm72re}
\| u^\lm_\va  -u^\lm_0\|_{L^2(\Omega_T)}
\le C \delta
\big\{
\| u^\lm_0 \|_{L^2(0, T; H^2(\Omega))}
+ \|\partial_t u^\lm_0\|_{L^2(\Omega_T)}
 \big\},
\end{equation}
where $C$ depends only on $d$, $\mu$, $L$, $\Omega$ and $T$.
\end{theorem}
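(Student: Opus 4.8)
The plan is to combine the two-scale approximation of Theorem~\ref{cothm71} with an Aubin--Nitsche duality argument; write $\de=(1+\sqrt{\lm})\va$ and note that we may assume $\de\le 1$, since otherwise \eqref{cothm72re} is immediate from the basic energy estimate. By \eqref{def-wwan}, $u^\lm_\va-u^\lm_0$ equals $\widetilde{w}^\lm_\va$ plus (up to signs) the four corrector terms $\va S_\de((\chi^\lm)^\va\na u^\lm_0)\eta_\de$, $\va^2 S_\de((\pa_{x_i}\mathfrak{B}^\lm_{i(d+1)j})^\va\pa_{x_j}u^\lm_0)\eta_\de$, $\va^2 S_\de((\mathfrak{B}^\lm_{i(d+1)j})^\va\pa_{x_i}\pa_{x_j}u^\lm_0)\eta_\de$ and $\va^2 S_\de((\mathfrak{B}^\lm_{i(d+1)j})^\va\pa_{x_j}u^\lm_0)\pa_{x_i}\eta_\de$. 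Each of these I would bound directly in $L^2(\Om_T)$ using Lemma~\ref{le2.0} together with the corrector and flux-corrector bounds \eqref{lecore0}, \eqref{huab3}, \eqref{eshuab} (recall $\|\na_x A^\lm\|_{\infty}=\|\na_x A\|_{\infty}\le L$ since $\theta=1$), the elementary inequalities $\va\le\de$ and $\va^2(1+\lm)\le\de^2\le\de$, and the cut-off bounds for $\eta_\de$; this gives $\|u^\lm_\va-u^\lm_0-\widetilde{w}^\lm_\va\|_{L^2(\Om_T)}\le C\de\,\|u^\lm_0\|_{L^2(0,T;H^2(\Om))}$ with $C=C(d,\mu,L)$. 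It therefore suffices to prove $\|\widetilde{w}^\lm_\va\|_{L^2(\Om_T)}\le C\de\,\{\|u^\lm_0\|_{L^2(0,T;H^2(\Om))}+\|\pa_t u^\lm_0\|_{L^2(\Om_T)}\}$.

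For this I would use duality. Fix $\Phi\in L^2(\Om_T)$ and let $\phi_\va$ solve the backward adjoint problem $-\pa_t\phi_\va+(\mathcal{L}^\lm_\va)^*\phi_\va=\Phi$ in $\Om_T$, $\phi_\va=0$ on $\pa\Om\times(0,T)$, $\phi_\va(\cdot,T)=0$, where $(\mathcal{L}^\lm_\va)^*$ is built from $(A^\lm)^T$; the standard energy estimate gives $\|\na\phi_\va\|_{L^2(\Om_T)}\le C\|\Phi\|_{L^2(\Om_T)}$. Since $\eta_\de=0$ on $\Om_{T,2\de}$, the function $\widetilde{w}^\lm_\va$ agrees with $u^\lm_\va-u^\lm_0$ near the lateral boundary and near $t=0$, so $\widetilde{w}^\lm_\va\in L^2(0,T;H^1_0(\Om))$ with zero initial trace, and $(\pa_t+\mathcal{L}^\lm_\va)\widetilde{w}^\lm_\va$ is a spatial divergence of $L^2(\Om_T)$-fields by the same computation as for \eqref{leap-1re}. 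Integrating by parts in $t$ and $x$ and using $\widetilde{w}^\lm_\va(\cdot,0)=0=\phi_\va(\cdot,T)$ gives the identity $\int_{\Om_T}\widetilde{w}^\lm_\va\,\Phi=\int_0^T\langle(\pa_t+\mathcal{L}^\lm_\va)\widetilde{w}^\lm_\va,\phi_\va\rangle_{H^{-1}(\Om)\times H^1_0(\Om)}\,dt$. Now \eqref{cothm71re1} with $\psi=\phi_\va$ bounds the right-hand side by $C\{\|u^\lm_0\|_{L^2(0,T;H^2(\Om))}+\|\pa_t u^\lm_0\|_{L^2(\Om_T)}\}\,\{\de\|\na\phi_\va\|_{L^2(\Om_T)}+\de^{1/2}\|\na\phi_\va\|_{L^2(\Om_{T,5\de})}\}$, so in view of the energy bound the whole matter reduces to the boundary-layer estimate $\|\na\phi_\va\|_{L^2(\Om_{T,5\de})}\le C\de^{1/2}\|\Phi\|_{L^2(\Om_T)}$; taking the supremum over $\|\Phi\|_{L^2(\Om_T)}=1$ then finishes the proof.

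The main obstacle is exactly this boundary-layer estimate for the oscillating adjoint solution $\phi_\va$, and it is here that the hypotheses $\theta=1$ and $\Om\in C^{1,1}$ are used. I would introduce the adjoint homogenized solution $\phi^*_0$, solving $-\pa_t\phi^*_0-\mathrm{div}((\widehat{A^\lm})^T\na\phi^*_0)=\Phi$ with the same zero data. Since $\Om$ is $C^{1,1}$ and, by Lemma~\ref{le-co} with $\theta=1$, $\widehat{A^\lm}$ is Lipschitz in $(x,t)$ with a constant $\le CL$ independent of $\lm$, the standard parabolic $W^{2,1}_2$ regularity gives $\|\phi^*_0\|_{L^2(0,T;H^2(\Om))}+\|\pa_t\phi^*_0\|_{L^2(\Om_T)}\le C\|\Phi\|_{L^2(\Om_T)}$ with $C=C(d,\mu,L,\Om,T)$. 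Then \eqref{plcothm71-2} applied to $\phi^*_0$ gives $\|\na\phi^*_0\|_{L^2(\Om_{T,6\de})}\le C\de^{1/2}\|\Phi\|_{L^2(\Om_T)}$; Theorem~\ref{cothm71} applied to the adjoint operator (in its energy form, i.e.\ the analogue of \eqref{cothm71re2}) shows that the adjoint two-scale remainder $\phi_\va-\phi^*_0-\va S_\de((\chi^{*\lm})^\va\na\phi^*_0)\eta_\de+\cdots$ has $L^2(\Om_T)$-gradient at most $C\de^{1/2}\|\Phi\|_{L^2(\Om_T)}$; and Lemma~\ref{le2.0} (the factor $\va^{-1}$ in \eqref{le20-re1} cancelling the prefactor $\va$), together once more with \eqref{plcothm71-2}, bounds the gradient of the adjoint corrector terms on $\Om_{T,5\de}$ by the same quantity. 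Adding these three contributions yields the desired $\de^{1/2}$-bound. What remains is bookkeeping: checking that all constants depend only on $d,\mu,L,\Om,T$ (in particular that the Lipschitz norm of $\widehat{A^\lm}$ is controlled uniformly in $\lm$) and that the singular layer of the backward problem, which sits near $t=T$, is handled by the time-reversal symmetry of \eqref{plcothm71-2}; no rescaling is needed since everything is done on the fixed cylinder $\Om_T$.
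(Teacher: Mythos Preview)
Your proposal is correct and follows essentially the same Aubin--Nitsche duality strategy as the paper: both bound the corrector tail $u^\lm_\va-u^\lm_0-\widetilde w^\lm_\va$ directly by $C\de$, introduce the backward adjoint problem, invoke the adjoint two-scale expansion, and use the $W^{2,1}_2$ regularity of the homogenized adjoint (available because $\theta=1$ and $\Omega\in C^{1,1}$) together with the layer estimate \eqref{plcothm71-2}. The only organizational difference is that the paper splits the test function $v^\lm_\va$ into three pieces $z^\lm_\va(T-\cdot)$, $v^\lm_0$, and the adjoint corrector remainder \emph{before} applying \eqref{cothm71re1} (giving $J_1,J_2,J_3$), whereas you first apply \eqref{cothm71re1} with $\psi=\phi_\va$ and then decompose $\phi_\va$ in order to establish the boundary-layer bound $\|\na\phi_\va\|_{L^2(\Omega_{T,5\de})}\le C\de^{1/2}\|\Phi\|_{L^2}$. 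The ingredients and the constants are identical; the paper's ordering has the minor advantage that for the $J_3$ piece one only needs the crude full-space gradient bound of the adjoint correctors (since $\widetilde\eta_\de$ vanishes on the lateral layer), so one avoids having to localize Lemma~\ref{le2.0} to the initial time slab as you do---but your localization is also valid, since $S_\de$ has kernel support of parabolic radius $\de$ and \eqref{plcothm71-2} applies to $\phi^*_0$ on any thin slab.
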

\begin{proof}
The proof is based on \eqref{cothm71re1}, \eqref{cothm71re2} and the standard duality argument initiated in \cite{suslinaD2013}, see also \cite{gsjfa2017} for the parabolic settings.
For $G\in L^2(\Omega_T)$, let $v_\va^\lm(x,t)$ and $v^\lm_0(x,t)$ be respectively the solutions to
\begin{align*}
-\pa_t v_\va^\lm +\mathcal{L}^{\lm*}_\va v_\va^\lm=G \,\,\text{ in  } \Omega_T, \quad\quad v^\lm_\va=0  \,\, \text{ on } \Gamma_T\cup \big(\Omega\!\times\! \{t\!=\!T\}\big) ,
\end{align*} and
\begin{align*}
-\pa_t v_0^\lm +\mathcal{L}^{\lm*}_0 v^\lm_0=G \,\,\text{ in  } \Omega_T, \quad\quad v^\lm_0=0  \,\, \text{ on }\Gamma_T\cup \big(\Omega\!\times\! \{t\!=\!T\}\big),
\end{align*}
where  $ \Gamma_T= \pa \Omega \times(0,T) $, and $\mathcal{L}^{\lm *}_\va=- \text{div} ((\widehat{A^{\lm*}})(x,t)\na )$ with $ A^{\lm *}$ being the adjoint of $A^\lm$.
Then $v_\va^\lm(x,T-t), v_0^\lm(x,T-t)$ satisfy \eqref{eq71} and \eqref{heq71} with $g\equiv0$, and $A^{\lm*}, \widehat{A^{\lm*}}$ replaced by $A^{\lm*}(x,T-t,x/\va, (T-t)/\va^2)$ and $\widehat{A^{\lm*}}(x,T-t) $ respectively.
Let  $ \widetilde{\chi}^\lm, \widetilde{\mathfrak{B}}^\lm$ be respectively the correctors and flux correctors of the operators
$\pa_t-\text{div} (A^{\lm*}(x,T-t,x/\va,(T-t)/\va^2) \na ).$
Define
\begin{align}\label{p-cothm72-1}
\begin{split}
z^\lm_\va(x,t)&= v_\va^\lm(x,T-t)- v_0^\lm(x,T-t)-\varepsilon S_\de ( (\widetilde{\chi}^\lm)^\varepsilon\na  v_0^\lm(x,T-t))\widetilde{\eta}_\de\\
&\quad   +\varepsilon^{2}  S_\de ( (\pa_{x_i }\widetilde{\mathfrak{B}}^\lm_{i (d+1) j})^\varepsilon\pa_{x_j} v_0^\lm(x,T-t))\widetilde{\eta}_\de\\
&\quad  + \varepsilon^{2}  S_\de ( (\widetilde{\mathfrak{B}}^\lm_{i (d+1) j})^\varepsilon \pa_{x_i } \pa_{x_j} v_0^\lm(x,T-t))\widetilde{\eta}_\de\\
&\quad  + \varepsilon^{2}  S_\de ( (\widetilde{\mathfrak{B}}^\lm_{i (d+1) j})^\varepsilon\pa_{x_j} v_0^\lm(x,T-t)) \pa_{x_i } \widetilde{\eta}_\de ,
\end{split}
\end{align}where $\de=(1+\sqrt{\lm}) \va$, the cut-off function $\widetilde{\eta}_\de \in C^\infty_c (\R^{d+1})$ satisfies
\begin{align*}
 \begin{split}
 &0\leq \widetilde{\eta}_\de  \leq 1,\quad \text{and}\quad \widetilde{\eta}_\de  =1  \, \text{ in }  \Omega _{T}\!\setminus\!\Omega_{T,10\de},\,\, \widetilde{\eta}_\de  =0 \,\text{ in }  \Omega _{T,8\de}, \\
& |\na \widetilde{\eta}_\de |\leq C \de^{-1}, \quad |\pa_t \widetilde{\eta}_\de |+ |\nabla^2 \widetilde{\eta}_\de  |\leq C \de^{-2}.
\end{split}\end{align*}
In view of \eqref{cothm71re2}, we have
\begin{align}\label{p-cothm72-2}
\begin{split}
 \|\na z_\va^\lm\|_{L^2(\Omega_T)} &\leq  C \de^{1/2}\big (\|\na^2 v_0^\lm\|_{L^2(\Omega_T)} +\|\pa_t v_0^\lm\|_{L^2(\Omega_T)}+\|\na v_0^\lm\|_{L^2(\Omega_T)}\big)\\
 &\leq C \de^{1/2} \|G\|_{L^2(\Omega_T)},
 \end{split}
\end{align}
where we have used the $H^2$ estimate for parabolic systems with Lipschitz coefficients \cite{dongtran2018}.

We now perform the duality argument. Let $\widetilde{w}^\lm_\varepsilon$ be given by \eqref{def-wwan}.
Observe that
\begin{align}\label{p-cothm72-3}
 \big|\int_{\Omega_T}\widetilde{w}^\lm_\varepsilon\,   G\, dxdt \big|&=\big| \int_0^T\langle (\pa_t+\mathcal{L}_\va^\lm) \widetilde{w}^\lm_\varepsilon , ~ v^\lm_\varepsilon(t)   \rangle dt \big| \nonumber\\
&\leq \big|\int_0^T\langle(\pa_t+\mathcal{L}_\va^\lm) \widetilde{w}^\lm_\varepsilon , ¡¤z^\lm_\va(T-t) \rangle dt \big| +\big|\int_0^T\langle (\pa_t+\mathcal{L}_{\varepsilon}^\lm ) \widetilde{w}^\lm_\varepsilon , ~v^\lm_0 (t)   \rangle dt\big|\nonumber\\
&\quad+ \big|\int_0^T\langle (\pa_t+\mathcal{L}_{\varepsilon}^\lm ) \widetilde{w}^\lm_\varepsilon , ~v^\lm_\varepsilon (t)-v^\lm_0 (t)  - z^\lm_\va(T-t) \rangle dt\big| \nonumber\\
&\doteq J_1+J_2+J_3.\end{align}
Thanks to \eqref{cothm71re1} and \eqref{p-cothm72-2}, we have
\begin{align*}
J_1\leq C \de \big\{
\| u^\lm_0 \|_{L^2(0, T; H^2(\Omega))}
+ \|\partial_t u^\lm_0\|_{L^2(\Omega_T)}
 \big\} \|G\|_{L^2(\Omega_T)}.
\end{align*}
By \eqref{cothm71re1}, and \eqref{plcothm71-2} for $v_0^\lm$, we obtain that
\begin{align*}
J_2\leq C \de  \big\{
\| u^\lm_0 \|_{L^2(0, T; H^2(\Omega))}
+ \|\partial_t u^\lm_0\|_{L^2(\Omega_T)}
 \big\} \|G\|_{L^2(\Omega_T)}.
\end{align*}
Finally, in view of \eqref{p-cothm72-1} and Lemma \ref{le2.0}, we know that $v^\lm_\varepsilon (t)-v^\lm_0 (t)- z^\lm_\va(T-t)$ is supported in $\Omega_T\!\setminus\!\Omega_{T,6\de}$ and
\begin{align*}
 \| \na (v^\lm_\varepsilon (t)-v^\lm_0 (t)- z^\lm_\va(T-t))\|_{L^2(\Omega_T)}\leq C \|\na v^\lm_0\|_{L^2(\Omega_T)}\leq C \|G\|_{L^2(\Omega_T)}.
\end{align*}This, together with \eqref{cothm71re1}, implies that
\begin{align}
J_3\leq  C \de  \big\{
\| u^\lm_0 \|_{L^2(0, T; H^2(\Omega))}
+ \|\partial_t u^\lm_0\|_{L^2(\Omega_T)}
 \big\} \|G\|_{L^2(\Omega_T)}.
\end{align}
By taking the estimates for $J_1$-$J_3$ into \eqref{p-cothm72-3}, we derive  that
\begin{align*}
\|\widetilde{w}^\lm_\va\|_{L^2(\Omega_T)}\leq C \de  \big\{
\| u^\lm_0 \|_{L^2(0, T; H^2(\Omega))}
+ \|\partial_t u^\lm_0\|_{L^2(\Omega_T)}
 \big\} \|G\|_{L^2(\Omega_T)},
\end{align*}
which, combined with the estimate
\begin{align*}
 \| u^\lm_\varepsilon -u^\lm_0  - \widetilde{w}^\lm_\va\|_{L^2(\Omega_T)}\leq C\de \|\na u^\lm_0\|_{L^2(\Omega_T)},
\end{align*}
gives \eqref{cothm72re}.
\end{proof}
Now we are ready to prove Theorem \ref{cothm}.
\begin{proof}[\textbf{Proof of Theorem \ref{cothm}.}]
 Let $u_\va $ be the weak solution of $\pa_t u_\va +\mathfrak{L}_\va u_\va=F$ in $\Omega_T$ with $u_\va=g $ on $\pa_p \Omega_T$, and
$u_0$ the solution of the homogenized problem $\pa_t u_0-\text{\rm div}(\widehat{A}(x,t) \nabla u_0)=F$ in $\Omega_T$ with $u_0=g$ on $\pa_p \Omega_T$.
Let $\lm=\kappa^2/\va^2$. Then $\pa_tu_\va+\mathcal{L}_\va^\lm u_\va=\pa_tu_\va+ \mathfrak{L}_\va  u_\va=F$ in $\Omega_T$.
Let $u^\lm_0$ be the solution of  $\pa_t u_0^\lm-\text{\rm div}(\widehat{A^\lambda}(x,t) \nabla u^\lm_0)=F$ in $\Omega_T$
with $u^\lm_0=g$ on $\pa_p\Omega_T$.
Note that
\begin{equation}\label{3-10-0}
\aligned
\| u_\va -u_0\|_{L^2(\Omega_T)}
 & \le \| u_\va - u_0^\lm\|_{L^2(\Omega_T)}
+\| u^\lm_0  -u_0\|_{L^2(\Omega_T)}\\
& \le
C  (\kappa +\va) \big\{ \| u_0\|_{L^2(0, T; H^2(\Omega))}
+\|\partial_t u_0\|_{L^2(\Omega_T)} \big\}
+ \| u^\lm_0  -u_0\|_{L^2(\Omega_T)},
\endaligned
\end{equation}
where we have used Theorem \ref{cothm72} for the last inequality.
To estimate $u^\lm_0  -u_0$, we observe that $u^\lm_0  -u_0=0$ on $\pa_p\Omega_T$ and
$$
\pa_t(u^\lm_0 -u_0)-\text{\rm div} (\widehat{A^\lm} \nabla (u_0^\lm -u_0))
= \text{\rm div} ( (\widehat{A^\lm} -\widehat{A})\nabla u_0 ) \quad \text{ in }\Omega_T.
$$
By energy estimates,
$$
\aligned
\|\na u_0-\na u^\lm_0 \|_{L^2(\Omega_T)}
 & \le C  \|\widehat{A^\lm}-\widehat{A}\|_{\infty} \| \nabla u_0 \|_{L^2(\Omega_T)},
\endaligned
$$
where $C$ depends only on $d$, $\mu$, $\Omega$, and $T$.
This, together with Lemma \ref{lemconA}, (\ref{3-10-0}) and Poincar\'{e}'s inequality, gives (\ref{cothmre}).
\end{proof}
\noindent\textbf{Acknowledgments.} The authors would like to thank Professor Zhongwei Shen for bringing this problem to them, and also for the guidance as well as the enlightening discussions.

%\noindent\textbf{Acknowledgments.}

\bibliographystyle{amsplain}

\bibliography{gn2020}

\vspace{1cm}
\noindent Jun Geng \\
School of Mathematics and Statistics,  Lanzhou University, Lanzhou, P. R. China\\
E-mail: gengjun@lzu.edu.cn
\vspace{1cm}\\
\noindent Weisheng Niu \\
School of Mathematical Science, Anhui University,
Hefei, 230601, P. R. China\\
E-mail: niuwsh@ahu.edu.cn\\

\noindent\today

 \end{document}